\documentclass[12pt,reqno]{amsart}

\addtolength{\textwidth}{2cm} \addtolength{\hoffset}{-1cm}
\addtolength{\marginparwidth}{-1cm} \addtolength{\textheight}{2cm}
\addtolength{\voffset}{-1cm}
\usepackage{multirow}
\usepackage{hhline}

\usepackage{mathtools}
\usepackage{times}
\usepackage[T1]{fontenc}
\usepackage{mathrsfs}
\usepackage{latexsym}
\usepackage[dvips]{graphics}
\usepackage[titletoc, title]{appendix}
\setcounter{tocdepth}{1}
\usepackage{amsmath,amsfonts,amsthm,amssymb,amscd}
\usepackage[dvipsnames]{xcolor}
\usepackage{hyperref}
\usepackage{amsmath}
\usepackage[utf8]{inputenc}

\usepackage{color}
\usepackage{breakurl}

\usepackage{comment}
\newcommand{\bburl}[1]{\textcolor{blue}{\url{#1}}}

\newtheorem{thm}{Theorem}[section]

\newtheorem{cor}[thm]{Corollary}
\newtheorem{claim}[thm]{Claim}
\newtheorem{lem}[thm]{Lemma}
\newtheorem{prop}[thm]{Proposition}

\newtheorem{que}[thm]{Question}
\newtheorem{defi}[thm]{Definition}
\newtheorem{rek}[thm]{Remark}

\usepackage[utf8]{inputenc}

% Default fixed font does not support bold face
\DeclareFixedFont{\ttb}{T1}{txtt}{bx}{n}{12} % for bold
\DeclareFixedFont{\ttm}{T1}{txtt}{m}{n}{12}  % for normal

% Custom colors
\usepackage{color}
\definecolor{deepblue}{rgb}{0,0,0.5}
\definecolor{deepred}{rgb}{0.6,0,0}
\definecolor{deepgreen}{rgb}{0,0.5,0}

\usepackage{listings}

% Python style for highlighting
\newcommand\pythonstyle{\lstset{
language=Python,
basicstyle=\ttm,
morekeywords={self},              % Add keywords here
keywordstyle=\ttb\color{deepblue},
emph={MyClass,__init__},          % Custom highlighting
emphstyle=\ttb\color{deepred},    % Custom highlighting style
stringstyle=\color{deepgreen},
frame=tb,                         % Any extra options here
showstringspaces=false
}}

% Python environment
\lstnewenvironment{python}[1][]
{
\pythonstyle
\lstset{#1}
}
{}

% Python for external files

% Python for inline
\newcommand\pythoninline[1]{{\pythonstyle\lstinline!#1!}}

\DeclareMathOperator{\supp}{supp}

\DeclareMathOperator{\sgn}{sgn}
\numberwithin{equation}{section}

\DeclareFontFamily{U}{mathx}{}
\DeclareFontShape{U}{mathx}{m}{n}{<-> mathx10}{}
\DeclareSymbolFont{mathx}{U}{mathx}{m}{n}
\DeclareMathAccent{\widehat}{0}{mathx}{"70}
\DeclareMathAccent{\widecheck}{0}{mathx}{"71}

\begin{document}

\title{Enlarge greedy sums in greedy-type properties by different factors}

\author[H. V. Chu]{H\`ung Vi\d{\^e}t Chu}
\email{\textcolor{blue}{\href{mailto:hchu@wlu.edu}{hchu@wlu.edu}}}
\address{Department of Mathematics, Washington and Lee University, Lexington, VA 24450, USA}

\subjclass[2020]{41A65, 46B15}

\keywords{bases, greedy algorithm, strong partially greedy, almost greedy}

\maketitle

\begin{abstract}
It was previously known that the almost greedy (AG) property essentially remains the same when we enlarge greedy sums in the classical definition by a factor $\lambda \geqslant 1$. The present paper shows that
if instead, we enlarge greedy sums in a reformulation of the AG property, we obtain a weaker one. However, the new property is essentially independent of the enlarging factor $\lambda$ once $\lambda > 1$. In contrast, we observe a continuum of partially greedy-like properties by varying $\lambda\in [1,\infty)$. Last but not least, under a threshold for $\lambda$, we characterize the isometric version of the weakened AG property. Specifically, the characterization holds if and only if $\lambda\in [1, 2]$. 
\end{abstract}

\tableofcontents

%%%%%%%%%%%%%%%%%%%%%%%%%%%%%%%%%%%%%%%%%%%%%%%%%%%%%%%%%%%%%%%%%%%%%%%%%%%%%%%%%%%%%%%%%%%%%%%%%%%%%%%%%%%%%%%%%%%%%%%%%%%%%%%%%%%%%%%%%%%%%%%%%%%%%%%%%%%%%%%%%%%%%%%%%%%%%%%%%%%%%%%%%%%%%%%%%%%%%%%%%%%%%%%%%%%%%%%%%%%%%%%%%%%%%%%%%%%%%%%%%%%%%%%%%%%%%%%%%%%%%%%%%%%%%%%%%%%%%%%%%%%%%%%%%%%%%%%%%%%%%%%%%%%%%%%%%%%%%%%%%%%%%%%%%%%%%%%%%%%%%%%%%%%%%%%%%%%%%%%%%%%
\section{Introduction}
Let $X$ be a separable Banach space over the field $\mathbb{F}\in \{\mathbb{R}, \mathbb{C}\}$ with the dual $X^*$. A sequence $(e_n)_{n=1}^\infty\subset X$ is called a basis of $X$ if the sequence is dense in $X$, and there exist $(e^*_n)_{n=1}^\infty\subset X^*$ such that $e^*_n(e_m) = 1$ if and only if $m = n$. A basis is semi-normalized if 
$$0 \ <\ \inf_n (\|e_n\|\wedge \|e_n^*\|)\ \leqslant\ \sup_n (\|e_n\|\vee \|e_n^*\|) \ <\ \infty.$$
We shall assume that our basis is semi-normalized, an essential condition to define the \textit{thresholding greedy algorithm} (TGA), which was introduced by Konyagin and Temlyakov \cite{KT1}. In particular, for $x\in X$ and $m\in \mathbb{N}$, the TGA produces the following $m$-term approximation of $x$:
$$G_m(x)\ :=\ \sum_{n\in A} e_n^*(x)e_n,$$
where $A$ satisfies
$$|A| \ =\ m \mbox{ and }\min_{n\in A}|e_n^*(x)|\ \geqslant\ \max_{n\notin A}|e_n^*(x)|.$$
The set $A$ is called a greedy set of $x$ of order $m$, which may not necessarily be unique. Let $\mathcal{G}(x,m)$ denote the collection of all greedy sets of $x$ of order $m$. Note that the TGA is well-defined because $\lim_{n\rightarrow\infty} |e_n^*(x)| = 0$ for every $x$ thanks to semi-normalization. 

Denote the collection of all finite subsets of $\mathbb{N}$ by $\mathbb{N}^{<\infty}$. Konyagin and Temlyakov \cite{KT1} studied the approximation efficiency of the TGA by comparing the error $\|x-G_m(x)\|$ against all possible $m$-term approximations. They called a basis \textit{greedy} if there is a constant $C \geqslant 1$ such that
$$\|x-G_m(x)\|\ \leqslant\ C\inf_{\substack{A\in \mathbb{N}^{<\infty}, |A| = m\\ (a_n)_{n\in A}\subset \mathbb{F}}}\left\|x-\sum_{n\in A}a_n e_n\right\|, \forall x\in X, m\in \mathbb{N},  G_m(x).$$
Here is a beautiful theorem that characterizes greedy bases; the readers can see Definitions \ref{uncondefi} and \ref{democraticdefi} for unconditional and democratic bases, respectively. 

\begin{thm}[Konyagin and Temlyakov \cite{KT1}]\label{chargreedy}
A basis is greedy if and only if it is unconditional and democratic. 
\end{thm}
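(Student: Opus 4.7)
The plan is to prove the two implications separately, extracting unconditionality and democracy from the greedy inequality via well-chosen test vectors for the easy direction, and then assembling the greedy inequality from a triangle-inequality decomposition for the converse.

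For the forward direction, suppose the basis is greedy with constant $C$. To extract democracy, I would take disjoint finite $A, B \subset \mathbb{N}$ with $|A|=|B|=m$ and set $x = \sum_{n \in A} e_n + (1-\varepsilon)\sum_{n \in B} e_n$ for small $\varepsilon > 0$. The coefficients on $A$ strictly dominate, so the unique greedy set of order $m$ is $A$, giving $x - G_m(x) = (1-\varepsilon)\sum_{n \in B} e_n$, while the candidate $m$-term sum $(1-\varepsilon)\sum_{n \in B} e_n$ leaves residual $\sum_{n \in A} e_n$. The greedy inequality then yields $(1-\varepsilon)\|\sum_{n \in B} e_n\| \leqslant C\|\sum_{n \in A} e_n\|$, and sending $\varepsilon \to 0^+$ and exploiting symmetry delivers democracy. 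To extract unconditionality, for finitely supported $x = \sum a_n e_n$ and $A \subset \supp(x)$, I would perturb to $y = x + M\sum_{n \in A} e_n$ with $M$ large enough that the greedy set of $y$ of order $|A|$ is forced to be $A$. Then $y - G_{|A|}(y) = \sum_{n \notin A} a_n e_n$, while the $|A|$-term candidate $M\sum_{n \in A} e_n$ has error $\|x\|$; the greedy inequality becomes $\|\sum_{n \notin A} a_n e_n\| \leqslant C\|x\|$, which is the desired suppression-projection bound.

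For the converse, assume the basis is unconditional with constant $K$ and democratic with constant $\Delta$. Given $x$, $m$, a greedy set $A$, and any $m$-term candidate $y = \sum_{n \in B} b_n e_n$, I would split
\[
x - G_m(x) \ =\ \bigl[x - P_B(x)\bigr] + \bigl[P_B(x) - G_m(x)\bigr],
\]
where $P_B(x) := \sum_{n \in B} e^*_n(x) e_n$. Writing $x - P_B(x) = (x - y) - (P_B(x) - y)$ and noting that $P_B(x) - y$ is the coordinate projection of $x - y$ onto $B$, unconditionality yields $\|x - P_B(x)\| \leqslant (1+K)\|x-y\|$. For the second bracket one has
\[
P_B(x) - G_m(x) \ =\ \sum_{n \in B \setminus A} e^*_n(x) e_n - \sum_{n \in A \setminus B} e^*_n(x) e_n,
\]
and the key observation is that $|B \setminus A| = |A \setminus B|$. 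Setting $t := \min_{n \in A}|e^*_n(x)|$, the coefficients on $B \setminus A$ satisfy $|e^*_n(x)| \leqslant t$ (since $n \notin A$) while those on $A \setminus B$ satisfy $|e^*_n(x)| \geqslant t$; the standard unconditional coefficient comparison transfers each weighted sum to a multiple of the flat sum $\sum e_n$ of the same support, and democracy then swaps $B \setminus A$ for $A \setminus B$. Chaining these estimates controls the second bracket by a multiple of $\|x - P_B(x)\|$, hence of $\|x-y\|$, and taking the infimum over $B$ and $(b_n)_{n \in B}$ closes the argument.

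The main obstacle is this second step of the converse: carefully tracking constants through the cascade of unconditional coefficient comparisons (which convert between coefficient-weighted sums and flat characteristic sums) and the democratic swap between $B \setminus A$ and $A \setminus B$. Once that cascade is assembled, the greedy constant emerges as a polynomial expression in $K$ and $\Delta$, and the characterization is complete.
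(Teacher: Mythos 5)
The paper does not prove this theorem; it is quoted as a classical result of Konyagin and Temlyakov and cited directly to \cite{KT1}, so there is no ``paper's own proof'' to compare against. Your argument is, however, a correct reconstruction of the standard proof from the original paper, and the overall architecture — extracting democracy and unconditionality from the greedy inequality via test vectors, then reassembling the greedy bound through the split $x-G_m(x) = (x-P_B(x)) + (P_B(x)-G_m(x))$ — is exactly the classical route.

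Two small points worth tightening if you were to write this out in full. First, in the forward direction your test vector $x = 1_A + (1-\varepsilon)1_B$ only yields democracy for \emph{disjoint} $A,B$ of \emph{equal} cardinality; to get the general statement $\|1_A\|\leqslant C\|1_B\|$ for $|A|\leqslant |B|$ you should pad $A$ up to cardinality $|B|$ (using the unconditionality you just extracted, since $\|1_A\|\leqslant K\|1_{A'}\|$ for $A\subset A'$) and then route through a third set $E$ disjoint from both $A$ and $B$, paying a factor of $C^2$. Second, in the converse, the ``cascade'' you allude to for the bracket $P_B(x)-G_m(x)$ needs both directions of the unconditional coefficient comparison: the upper bound $\|P_{B\setminus A}(x)\|\leqslant K\,t\,\|1_{B\setminus A}\|$ with $t=\min_{n\in A}|e_n^*(x)|$, democracy to replace $\|1_{B\setminus A}\|$ by $\|1_{A\setminus B}\|$, and then the \emph{lower} bound $t\|1_{A\setminus B}\|\leqslant K\|P_{A\setslash B}(x)\|$ to climb back to $\|x-P_B(x)\|$ via one more projection. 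Chained, these give a constant on the order of $K^4\Delta$, consistent with your ``polynomial in $K$ and $\Delta$'' remark. With these details filled in the proof is complete.

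Wait — there is a typo in the display above: $A\setslash B$ should read $A\setminus B$. Correcting: the lower bound is $t\,\|1_{A\setminus B}\| \leqslant K\,\|P_{A\setminus B}(x)\|$, and $P_{A\setminus B}(x)$ is a coordinate projection of $x-P_B(x)$, hence bounded by $K\|x-P_B(x)\| \leqslant K(1+K)\|x-y\|$.
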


It turns out that greedy bases are quite rare as many classical Banach spaces including $\ell_p \oplus \ell_q (p\neq q)$, $(\oplus_{n=1}^\infty \ell_p^n)_{\ell_1} (1 < p \leqslant \infty)$, and $(\oplus_{n=1}^\infty \ell_p^n)_{c_0} (1 \leqslant p < \infty)$  have no greedy bases (see \cite{DFOS, EW} and the discussion on \cite[page 5]{C4}). This rarity of greedy bases motivates the definition of almost greedy (AG) bases by Dilworth et al.\ \cite{DKKT}. A basis is said to be \textit{almost greedy} if there is a constant $C \geqslant 1$ such that
$$\|x-G_m(x)\|\ \leqslant\ C\inf_{A\in \mathbb{N}^{<\infty}, |A| = m}\|x-P_A(x)\|, \forall x\in X, m\in \mathbb{N}, G_m(x),$$
where $P_A(x) = \sum_{n\in A} e_n^*(x)e_n$, the projection of $x$ onto $A$. The abundance of AG bases is illustrated by several results in \cite{DKK}: for example, \cite[Remark 7.7]{DKK} states that if a Banach space $X$ contains a complemented copy of $\ell_p$ for some $1 \leqslant p < \infty$, then $X$ has infinitely many inequivalent AG bases. In the same spirit as Theorem \ref{chargreedy}, we have 
\begin{thm}[Dilworth et al.\ \cite{DKKT}]\label{thmDKKT}
A basis is AG if and only if it is quasi-greedy and democratic. 
\end{thm}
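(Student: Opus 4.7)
I would first extract quasi-greediness by plugging $A = \varnothing$ into the AG inequality to obtain $\|x - G_m(x)\| \leqslant C\|x\|$, from which $\|G_m(x)\| \leqslant (1+C)\|x\|$ follows by the triangle inequality. For democracy, given finite disjoint $A, B \subset \mathbb{N}$ with $|A| = |B| = m$, I would apply the AG inequality to the test vector
$$x \;=\; \sum_{n \in A} e_n + (1-\varepsilon) \sum_{n \in B} e_n;$$
its unique greedy set of order $m$ is $A$, so comparing against the projection onto $B$ gives $(1-\varepsilon)\|\sum_{n \in B} e_n\| \leqslant C\|\sum_{n \in A} e_n\|$, and letting $\varepsilon \to 0^+$ yields democracy.

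\textbf{Reverse direction (setup).} For the harder implication, given $A \in \mathcal{G}(x,m)$ and an arbitrary $B \subset \mathbb{N}$ with $|B| = m$, I would split
$$x - G_m(x) \;=\; (x - P_B(x)) + P_E(x) - P_D(x),$$
where $D = A\setminus B$ and $E = B\setminus A$ satisfy $|D| = |E|$. Writing $y := x - P_B(x)$ and applying the triangle inequality, the problem reduces to bounding $\|P_D(x)\|$ and $\|P_E(x)\|$ by constant multiples of $\|y\|$.

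\textbf{The two projection bounds.} For $P_D(x)$, I would observe that $D$ is itself a greedy set of $y$ of order $|D|$: on $D$ the coefficients of $y$ coincide with those of $x$ (hence are at least $\alpha := \min_{n\in A}|e_n^*(x)|$), while off $D$ they either vanish on $B$ or are bounded by $\alpha$. Quasi-greediness therefore yields $\|P_D(x)\| = \|P_D(y)\| \leqslant K\|y\|$ directly. For $P_E(x)$, I would chain three consequences of quasi-greediness and democracy: (i) the unconditionality-for-constant-coefficients property gives $\|P_E(x)\| \leqslant C_1 \alpha \|\sum_{n \in E}\varepsilon_n e_n\|$ with $\varepsilon_n = \operatorname{sgn}(e_n^*(x))$; (ii) super-democracy, which follows from quasi-greedy plus democratic bases, converts this into $C_2\alpha \|\sum_{n \in D}\eta_n e_n\|$ for any unimodular $\eta_n$; (iii) a truncation-type inequality bounds $\alpha \|\sum_{n \in D}\eta_n e_n\| \leqslant C_3 \|y\|$, using that $D$ is a greedy set of $y$ with minimum coefficient modulus at least $\alpha$.

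\textbf{Main obstacle.} The nontrivial content is the two quasi-greedy lemmas used in bounding $\|P_E(x)\|$: the UCC property and the truncation inequality. Both are classical in the greedy-bases literature and are proved by iterated application of the quasi-greedy estimate to cleverly chosen perturbations of $y$, together with convex-combination arguments to absorb signs. Once these are in hand, assembling the chain $\|P_E(x)\| \leqslant C_1C_2C_3 \|y\|$ together with the $P_D$ bound gives $\|x - G_m(x)\| \leqslant (1+K+C_1C_2C_3)\|x - P_B(x)\|$ with a constant depending only on the quasi-greedy and democratic constants, completing the AG characterization.
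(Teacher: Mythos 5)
This theorem is stated in the paper only as a citation to Dilworth--Kalton--Kutzarova--Temlyakov; the paper gives no proof, so there is nothing internal to compare your argument against. Your sketch is a correct rendition of the standard DKKT argument: extract suppression quasi-greediness by taking $A=\varnothing$; extract democracy from the two-block test vector $1_A + (1-\varepsilon)1_B$; and for the converse, decompose $x - P_A(x) = (x - P_B(x)) + P_{B\setminus A}(x) - P_{A\setminus B}(x)$, noting that $D = A\setminus B$ is a greedy set of $y := x - P_B(x)$ (since off $D$ the coefficients of $y$ vanish on $B$ and are dominated by $\alpha$ elsewhere), which gives the $\|P_D\|$ bound directly by quasi-greediness and feeds the lower UL/truncation estimate in step (iii). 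Two minor points to tidy in a full write-up: your democracy extraction directly yields only the case of disjoint $A, B$ of equal cardinality, so the general case $|A|\leqslant |B|$ (and overlapping sets) needs the routine padding plus quasi-greedy step; and the sign bookkeeping in (i)--(iii) is most cleanly handled by using super-democracy once to pass from $\|1_{\varepsilon E}\|$ to $\|1_D\|$ and then applying the lower UL inequality to $P_D(y)$. Neither is a real gap.
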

We refer the readers to Definition \ref{quasidefi} for the definition of quasi-greedy bases.  
Greedy-type properties have been investigated from different angles, including the (non-)existence of greedy/AG bases \cite{AABW, AADK, DFOS, DKK}, the isometric theory \cite{AA0, AA, AABCO, AW}, and Lebesgue inequalities \cite{BBL, BBG, DKO}. 

The present paper continues \cite{C1, C3, C5, DKKT}, which aim to study the effect of changing the size of greedy sums relative to the size of the optimal approximation. We recall a surprising result by Dilworth et al. \cite{DKKT}
that enlarging the greedy sums in the greedy condition by a factor $\lambda > 1$ gives us a property equivalent to the AG property; that is, a basis is AG if and only if there is a constant $C \geqslant 1$ such that
\begin{equation}\label{e2}\|x-G_{\lceil \lambda m\rceil}(x)\|\ \leqslant\ C\inf_{\substack{A\in \mathbb{N}^{<\infty}, |A| = m\\ (a_n)_{n\in A}\subset \mathbb{F}}}\left\|x-\sum_{n\in A}a_n e_n\right\|, \forall x\in X, m\in \mathbb{N}, G_{\lceil \lambda m\rceil}(x).\end{equation}
Hence, enlarging greedy sums in the greedy property by the factor $\lambda$ weakens the property quite dramatically.   
Motivated by this result, \cite{C1} studied whether enlarging greedy sums weakens the AG property and the so-called strong partially greedy (PG) property. Here, the PG property is defined as the existence of a constant $C \geqslant 1$ such that
$$\|x-G_m(x)\|\ \leqslant\ C\inf_{0\leqslant n\leqslant m}\left\|x-\sum_{i=1}^n e_i^*(x)e_i\right\|, \forall x\in X,  m\in \mathbb{N},  G_m(x).$$
This property was first introduced by Dilworth et al.\ \cite{DKKT} for Schauder bases and then extended to general bases by Berasategui et al.\ \cite{BBL}; the idea is to compare the greedy sums against partial sums. 

For ease of stating the main findings of \cite{C1}, we give a name to the properties arising from enlarging greedy sums. 
\begin{defi}\cite[Definition 1.3]{C1} \normalfont
Let $\lambda\geqslant 1$. A basis $(e_n)_{n=1}^\infty$ is said to be $\lambda$-AG if there exists a constant $C \geqslant 1$ such that
$$\|x-G_{\lceil \lambda m\rceil}(x)\|\ \leqslant\ C\inf_{A\in \mathbb{N}^{<\infty}, |A| = m}\|x-P_A(x)\|, \forall x\in X, m\in \mathbb{N},  G_{\lceil \lambda m\rceil}(x).$$
\end{defi}

\begin{defi}\cite[Definition 1.5]{C1}\normalfont\label{defLambdaPG}
Let $\lambda\geqslant 1$. A basis $(e_n)_{n=1}^\infty$ is said to be $\lambda$-partially greedy ($\lambda$-PG) if there exists a constant $C\geqslant 1$ such that 
$$\|x-G_{\lceil \lambda m\rceil}(x)\|\ \leqslant\ C\inf_{0\leqslant n\leqslant m}\left\|x-\sum_{i=1}^n e_i^*(x)e_i\right\|, \forall x\in X, m\in \mathbb{N},  G_{\lceil \lambda m\rceil}(x).$$
\end{defi}

\begin{thm}\cite[Theorem 1.4]{C1}\label{pm2}
A basis is AG if and only if it is $\lambda$-AG for all (some) $\lambda\geqslant 1$.
\end{thm}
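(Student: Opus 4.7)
We prove AG $\Rightarrow$ ``$\lambda$-AG for every $\lambda \geqslant 1$'' $\Rightarrow$ ``$\lambda$-AG for some $\lambda \geqslant 1$'' $\Rightarrow$ AG. The middle implication is immediate and $\lambda = 1$ reduces to AG, so the substance lies in the two outer implications at $\lambda > 1$.

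\emph{Forward direction.} By Theorem \ref{thmDKKT}, AG implies quasi-greediness: there is $K \geqslant 1$ with $\|G_m(x)\| \leqslant K\|x\|$ for every $x$ and $m$. The crucial step is a tail-monotonicity estimate: given a greedy set $\Lambda_n$ of $x$ of order $n$ and $m \leqslant n$, one can choose a greedy set $\Lambda_m$ of order $m$ with $\Lambda_m \subset \Lambda_n$ via consistent tie-breaking. Setting $y := x - G_m(x)$, the set $\Lambda_n \setminus \Lambda_m$ is a greedy set of $y$ of order $n - m$, so $x - G_n(x) = y - G_{n-m}(y)$ and quasi-greediness gives $\|x - G_n(x)\| \leqslant (1+K)\|x - G_m(x)\|$. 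Taking $n = \lceil \lambda m \rceil$ and combining with the AG estimate for $\|x - G_m(x)\|$ produces $\lambda$-AG with constant $(1+K)\,C_{\mathrm{AG}}$.

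\emph{Backward direction.} Assume $\lambda$-AG with constant $C$ for some $\lambda > 1$. By Theorem \ref{thmDKKT}, it suffices to establish quasi-greediness and democracy. For quasi-greediness, I test with $x = a\sum_{n \in S} e_n + z$, where $|S| = m$, $\supp(z) \cap S = \emptyset$, and $a > \max_n |e_n^*(z)|$ so that $S$ is a greedy set of $x$ of order $m$; then the greedy set of order $\lceil \lambda m \rceil$ equals $S \cup C_z$, where $C_z$ is a greedy set of $z$ of order $k := \lceil \lambda m \rceil - m$, and $x - G_{\lceil \lambda m \rceil}(x) = z - G_k(z)$. Choosing the infimum set in $\lambda$-AG equal to $S$ makes $\|x - P_S(x)\| = \|z\|$, yielding $\|z - G_k(z)\| \leqslant C\|z\|$ and hence $\|G_k(z)\| \leqslant (1+C)\|z\|$. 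Varying $m$ covers every $k$ in $\{\lceil \lambda m \rceil - m : m \in \mathbb{N}\}$; semi-normalization bridges the remaining orders since consecutive gaps are uniformly bounded, and a truncation argument removes the disjointness requirement on $\supp(z)$.

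For democracy, I test with $x = \alpha\sum_{n \in S} e_n + \beta\sum_{n \in T} e_n$ where $S, T$ are disjoint with $|S| = m$, $|T| = \lceil \lambda m \rceil$, and $\alpha < \beta$ so that $T$ is the greedy set of order $\lceil \lambda m \rceil$. Taking the infimum set equal to $S$ gives $\alpha \|\sum_{n \in S} e_n\| \leqslant C\beta \|\sum_{n \in T} e_n\|$. Further test vectors (with the roles and relative sizes adjusted) produce a family of asymmetric comparisons between indicator-sum norms of sizes $m$ and $\lceil \lambda m \rceil$. The main obstacle is upgrading these asymmetric bounds into a uniform equal-size democracy inequality $\|\sum_{n \in U} e_n\| \lesssim \|\sum_{n \in V} e_n\|$ for $|U| = |V|$: naive iteration across scales accumulates constants, so one must interleave the comparisons with the quasi-greediness just established, together with auxiliary test vectors handling sizes not directly of the form $\lceil \lambda m \rceil$.
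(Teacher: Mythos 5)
The paper does not actually prove Theorem~\ref{pm2}; it cites \cite[Theorem 1.4]{C1}, so there is no in-paper argument to compare against, and your attempt must be judged on its own. Your forward direction is correct: nest a greedy set $\Lambda_m\subset\Lambda_{\lceil\lambda m\rceil}$ of order $m$, observe that $\Lambda_{\lceil\lambda m\rceil}\setminus\Lambda_m$ is a greedy set of $x-P_{\Lambda_m}(x)$, apply quasi-greediness, and finish with the AG bound applied to $\Lambda_m$. The quasi-greediness half of your backward direction also works, although it is cleaner to note that $\inf_{|A|=m}\|x-P_A(x)\|\leqslant\|x\|$ (push $A$ off to infinity, where the coefficients of $x$ vanish), so that $\lambda$-AG directly gives $\|x-G_{\lceil\lambda m\rceil}(x)\|\leqslant C\|x\|$ for every $m$, and then one invokes \cite[Proposition 4.1]{O} exactly as in the proof of Lemma~\ref{l1}; this avoids both the disjointness issue for $S$ and $\supp(z)$ and the gap-bridging entirely.

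The genuine gap is the democracy half of the backward direction, which you flag but do not close. You stop at the asymmetric bound $\|1_S\|\leqslant C\|1_T\|$ for disjoint $S,T$ with $|S|=m$, $|T|=\lceil\lambda m\rceil$, and then only assert that one must ``interleave'' comparisons with quasi-greediness and use ``auxiliary test vectors'', while also voicing the (misplaced) worry that iteration across scales would accumulate constants. No iteration is needed; a single step plus the triangle inequality suffices. First remove disjointness: for any $A,B$ with $\lceil\lambda|A|\rceil\leqslant|B|$, write $\|1_A\|\leqslant\|1_{A\cap B}\|+\|1_{A\setminus B}\|$; the first term is at most $K_s\|1_B\|$ by suppression quasi-greediness (every subset of $B$ is a greedy set of $1_B$), and the second is at most $C\|1_B\|$ by your disjoint estimate, since $\lceil\lambda|A\setminus B|\rceil\leqslant\lceil\lambda|A|\rceil\leqslant|B|$. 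Now take arbitrary $A,B$ with $|A|\leqslant|B|=n$; for $n\geqslant\lambda$ (the finitely many smaller $n$ are handled by semi-normalization) set $m:=\lfloor n/\lambda\rfloor\geqslant 1$, partition $A$ into at most $\lceil n/m\rceil\leqslant 2\lambda+1$ blocks each of size at most $m$, apply the preceding bound to each block against the same $B$, and sum. This yields democracy with a constant depending only on $\lambda$, $C$, and the quasi-greedy constant, and Theorem~\ref{thmDKKT} then closes the backward direction.
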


\begin{thm}\cite[Theorem 1.8]{C1}\label{pm3}
Let $\lambda > 1$. 
There exists an unconditional basis that is $\lambda$-PG but is not PG. 
\end{thm}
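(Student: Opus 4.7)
My plan is to first reformulate the two properties as combinatorial inequalities on norms of indicator-type vectors, and then construct an explicit unconditional basis that realizes the asymmetry. For the reformulation I test on the vectors $x = \sum_{n=1}^m e_n + (1+\varepsilon)\sum_{n\in D}e_n$ with $D\subset[m+1,\infty)$: when $|D|=\lceil\lambda m\rceil$, the greedy set of order $\lceil\lambda m\rceil$ is exactly $D$ (its coefficients are strictly larger), so $\|x-G_{\lceil\lambda m\rceil}(x)\|=\|\sum_{n=1}^m e_n\|$; and since $S_m(x)=\sum_{n=1}^m e_n$, one has $\|x-S_m(x)\|=(1+\varepsilon)\|\sum_{n\in D}e_n\|$. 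Letting $\varepsilon\downarrow 0$, the $\lambda$-PG inequality reduces to
\begin{equation*}
\Bigl\|\sum_{n=1}^m e_n\Bigr\| \ \leqslant\ C\,\Bigl\|\sum_{n\in D}e_n\Bigr\| \qquad (D\subset[m+1,\infty),\ |D|=\lceil\lambda m\rceil),
\end{equation*}
and the analogous reduction for PG replaces $\lceil\lambda m\rceil$ by $m$. The reverse implications are standard consequences of unconditionality.

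With the problem reduced to a one-parameter family of indicator inequalities, I will build a $1$-unconditional basis on a space of the form $X=\bigl(\bigoplus_k X_k\bigr)_{\ell_1}$, where each $X_k$ is a ``cheap'' block isomorphic to $\ell_\infty^{n_k}$. In this space $\|\sum_{n\in A}e_n\|$ essentially equals the number of blocks met by $A$. The sizes $(n_k)$ will be tuned to $\lambda$, and a subsequence $m_j=n_{k_j}$ will be chosen so that the $k_j$-th block begins just past position $m_j$; taking $D_j$ equal to this single block then gives $\|\sum_{n\in D_j}e_n\|=1$ while $\|\sum_{n=1}^{m_j}e_n\|$ equals the (growing) number of blocks preceding $m_j$, witnessing the failure of PG. For $\lambda$-PG, the plan is to impose a growth cap $n_{k+1}/n_k<\lambda$ so that no single block past position $m$ can hold a set of size $\lceil\lambda m\rceil$; combined with the $\ell_1$-summation this forces $\|\sum_{n\in D}e_n\|$ to be at least a fixed fraction of $\|\sum_{n=1}^m e_n\|$.

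The main obstacle is that the naive direct-sum construction, although it prevents absorption of $D$ into the very next block, does not by itself prevent absorption of $D$ into a much later block (since the cap $n_{k+1}/n_k<\lambda$ still allows sizes to grow unboundedly in absolute terms). To close this gap I will augment the combining norm with an extra Schreier-type penalty (charging each set for the farthest block it meets) or, equivalently, interleave ``expensive'' $\ell_1$-positions between the $\ell_\infty$-blocks in a pattern calibrated to $\lambda$. A careful case analysis on how $D$ distributes between cheap and expensive regions then establishes the uniform lower bound on $\|\sum_{n\in D}e_n\|$ required for $\lambda$-PG. Once this uniform estimate is in hand, the failure of PG at the scales $m_j$ follows immediately from the construction, and together with the reformulation this yields a basis that is $\lambda$-PG but not PG.
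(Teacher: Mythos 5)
Your reduction step has a genuine gap. The indicator inequality you derive, namely
\begin{equation*}
\Bigl\|\sum_{n=1}^m e_n\Bigr\|\ \leqslant\ C\,\Bigl\|\sum_{n\in D}e_n\Bigr\| \quad\text{for } D\subset(m,\infty),\ |D|\geqslant\lceil\lambda m\rceil,
\end{equation*}
is indeed necessary for $\lambda$-PG, but it is \emph{not} sufficient even in the $1$-unconditional setting, and the claim that ``the reverse implications are standard consequences of unconditionality'' is where the argument breaks. The actual characterization (Theorem \ref{pm1}, i.e.\ \cite[Theorem 1.7]{C1}) reduces $\lambda$-PG to quasi-greedy plus \emph{$\lambda$-max conservativity}: $\|1_A\|\leqslant C\|1_B\|$ for arbitrary finite $A<B$ with $(\lambda-1)\max A+|A|\leqslant|B|$. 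When $|A|<\max A$ this allows $|B|$ strictly smaller than $\lceil\lambda\max A\rceil$, and your initial-segment inequality says nothing about such pairs. You can see the necessity of the stronger condition by testing $\lambda$-PG on $x=1_A+(1+\varepsilon)\bigl(1_{[1,\max A]\setminus A}+1_B\bigr)$ with $B>\max A$ and $|B|=\lceil\lambda\max A\rceil-\max A+|A|$, taking $n=\max A$ in the infimum. You can also see where sufficiency fails: for a generic $x$, $m$, $\Lambda\in\mathcal{G}(x,\lceil\lambda m\rceil)$, and $n\leqslant m$, the set to control is $A=\Lambda^c\cap[1,n]$ (typically not an interval) against $B=\Lambda\cap(n,\infty)$ with $|B|\geqslant\lceil\lambda m\rceil-n$; replacing $A$ by $[1,n]$ via unconditionality then demands $|B|\geqslant\lceil\lambda n\rceil$, which fails when $n$ is close to $m$ since $\lceil\lambda m\rceil-m<\lceil\lambda m\rceil$. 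So one cannot work only with initial segments.

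Beyond the reduction, the construction itself is left as a sketch: you correctly identify the central obstacle (a $\lceil\lambda m\rceil$-element set being absorbed into a single later block), but the fix---a Schreier-type penalty or interleaved $\ell_1$-positions calibrated to $\lambda$---is announced rather than carried out, and the ``careful case analysis'' is precisely the nontrivial part. For comparison, the paper's own route (used for the stronger Theorem \ref{m1'}) invokes the quasi-greedy $+$ $\lambda$-max conservative characterization and then builds a completely explicit norm
$\|(x_i)\|=\max_i|x_i|\vee\sup_{j,F}\sum_{i\in F,\ i>a_1g_j,\ i\notin[g_{j+1},g_{j+1}+a_2g_j]}|x_i|$
over a Schreier-like family $\mathcal{F}$ with a rapidly growing sequence $(g_n)$ and tuning parameters $a_1,a_2,a_3,a_4$; the case analysis in Claims \ref{cl1} and \ref{cl2} is exactly the content you deferred. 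Your strategic instinct (cheap blocks, expensive interleavings, calibrated growth) matches the paper's, but both the equivalence you rely on and the estimates the construction must satisfy need to be proved, not asserted.
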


While Theorem \ref{pm2} says that enlarging greedy sums does not weaken the AG property, Theorem \ref{pm3} states that enlarging greedy sums does weaken the PG property. The present paper adds to the literature on enlarging greedy sums by addressing the two following questions. Throughout the paper, let $\lambda_1, \lambda_2$ denote two real numbers such that $1\leqslant \lambda_1 < \lambda_2$. Let $\lambda$ be any real number at least $1$. 

\begin{que}\normalfont\label{pr1}
It follows from previous work that a $\lambda_1$-PG basis must be $\lambda_2$-PG. Is it true that for an arbitrary pair $(\lambda_1, \lambda_2)$, a $\lambda_2$-PG basis must be $\lambda_1$-PG? Our Theorem \ref{m1} answers this question in the negative. 
\end{que}

\begin{rek}\normalfont
In an attempt to give a sufficient condition on when a $\lambda_2$-PG basis must be $\lambda_1$-PG, \cite[Definition 8.9]{C4} introduced right-skewed bases. Since the basis in the proof of \cite[Theorem 1.8]{C1} is right-skewed, it is $\lambda$-PG basis for every $\lambda > 1$. The basis we shall construct to answer Question \ref{pr1} must not be right-skewed.
\end{rek}

\begin{que}\normalfont\label{pr2}
Out of all the greedy-type properties mentioned above, the AG property is the only one not weakened by enlarging greedy sums (Theorem \ref{pm2}). Is there an equivalent reformulation of the AG property such that when we enlarge greedy sums in the reformulation, we obtain a weaker property? Theorem \ref{m10} answers this question in the positive. 
\end{que}

On the way to answer Questions \ref{pr1} and \ref{pr2}, we encounter various results that are of their own interest. For example, from \cite{BBC, C3}, we have the $\lambda$-AG2 and $\lambda$-PG2 properties. While
$$1\mbox{-PG} \ =\  1\mbox{-PG2},$$
we establish that for all $\lambda_2 > \lambda_1 > 1$ and $\lambda'_1, \lambda'_2, \lambda''_1, \lambda''_2 > 1$, we have
\begin{equation}\label{e25}\lambda''_1\mbox{-AG1}\ \Longleftrightarrow\ \lambda''_2\mbox{-AG2}\ \Longrightarrow\ \lambda'_1\mbox{-PG2}\ \Longleftrightarrow\ \lambda'_2\mbox{-PG2}\ \Longrightarrow\ \lambda_1\mbox{-PG}\ \Longrightarrow\ \lambda_2\mbox{-PG};\end{equation}
furthermore, none of the one-sided arrows can be reversed. Another example is the isometric result, Theorem \ref{m11}.

%%%%%%%%%%%%%%%%%%%%%%%%%%%%%%%%%%%%%%%%%%%%%%%%%%%%%%%%%%%%%%%%%%%%%%%%%%%%%%%%%%%%%%%%%%%%%%%%%%%%%%%%%%%%%%%%%%%%%%%%%%%%%%%%%%%%%%%%%%%%%%%%%%%%%%%%%%%%%%%%%%%%%%%%%%%%%%%%%%%%%%%%%%%%%%%%%%%%%%%%%%%%%%%%%%%%%%%%%%%%%%%%%%%%%%%%%%%%%%%%%%%%%%%%%%%%%%%%%%%%%%%%%%%%%%%%%%%%%%%%%%%%%%%%%%%%%%%%%%%%%%%%%%%%%%%%%%%%%%%%%%%%%%%%%%%%%%%%%%%%%%%%%%%%%%%%%%%%%%%%%%%
\section{Main results and preliminaries}\label{mainprelim}

In this section, we record our main results along with useful preliminaries. We start with the first result that answers Question \ref{pr1}. 

\begin{thm} \label{m1} Let $1\leqslant \lambda_1 < \lambda_2$. 
There exists an unconditional basis that is $\lambda_2$-PG but not $\lambda_1$-PG, i.e., we have a continuum of PG-type properties.  
\end{thm}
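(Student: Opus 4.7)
The goal is to construct, for each pair $1\leq\lambda_1<\lambda_2$, an unconditional basis $(e_n)_{n=1}^\infty$ of a separable Banach space $X$ that is $\lambda_2$-PG but not $\lambda_1$-PG. The case $\lambda_1=1$ is Theorem \ref{pm3}, whose witness basis is right-skewed and hence $\lambda$-PG for every $\lambda>1$; by the remark following Question \ref{pr1}, our basis must not be right-skewed, so the construction has to be genuinely tied to the scale $\lambda_1$. A preliminary observation is that for an unconditional basis with \emph{unique} greedy selections one has $\|x-G_{r_1}(x)\|\leq K_u\|x-G_{r_2}(x)\|$ whenever $r_1<r_2$, so any witness of failure at scale $\lambda_1$ must exploit non-uniqueness of the greedy set, i.e.\ a vector whose nonzero coefficients are all equal on a critical block.

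The candidate space is a direct sum $\bigl(\bigoplus_{k\geq 1}X_k\bigr)_{\ell_p}$ of finite-dimensional blocks with their canonical bases, with $p$ chosen to preserve unconditionality and semi-normalization. Fix a rapidly increasing sequence $(m_k)$ so that $\lceil\lambda_2 m_k\rceil-\lceil\lambda_1 m_k\rceil\geq(\lambda_2-\lambda_1)m_k-1\to\infty$. Each block $X_k$ has dimension $N_k$ with $\lceil\lambda_1 m_k\rceil<N_k\leq\lceil\lambda_2 m_k\rceil$, and its indices $B_k\subset\mathbb{N}$ are positioned so that the initial portion $B_k^{\mathrm{in}}:=B_k\cap[1,m_k]$ and the tail portion $B_k^{\mathrm{out}}:=B_k\setminus[1,m_k]$ have prescribed sizes. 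On $X_k$ I would put a Schreier-type norm governed by a family of positive functionals designed to assign large value to the $\mathbf{1}_{B_k^{\mathrm{in}}}$-pattern and a much smaller value to the $\mathbf{1}_{B_k^{\mathrm{out}}}$-pattern; together with an $\ell_\infty$ term this gives unconditionality with constant $1$ while encoding the desired asymmetry.

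Failure of $\lambda_1$-PG would be witnessed by $x_k:=\mathbf{1}_{B_k}$. Since all its coefficients equal $1$, one may legitimately choose a greedy set of order $\lceil\lambda_1 m_k\rceil$ lying entirely in $B_k^{\mathrm{out}}$ (assuming $|B_k^{\mathrm{out}}|\geq\lceil\lambda_1 m_k\rceil$, arranged by the choice of sizes), leaving a residual that contains all of $B_k^{\mathrm{in}}$; by design its norm grows with $k$, whereas the partial-sum residual at $n=m_k$ equals $\mathbf{1}_{B_k^{\mathrm{out}}}$ whose norm stays bounded. For $\lambda_2$-PG on the same $x_k$, any greedy set of order $\lceil\lambda_2 m_k\rceil\geq N_k$ must cover all of $B_k$, killing the residual entirely. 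Extending to arbitrary vectors is then a telescoping argument: decompose an arbitrary $x$ block-wise, use unconditionality to reduce to the positive-coefficient case, and apply the block-local bound.

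\textbf{The main obstacle} is establishing the $\lambda_2$-PG inequality uniformly over all $x\in X$ and all $m\in\mathbb{N}$, not merely on the test vectors. An arbitrary $x$ may have support spread across many blocks, its $\lceil\lambda_2 m\rceil$-greedy set need not be concentrated in any single block, and the Schreier-type functionals may interact across blocks. The resolution should exploit the combinatorial margin $\lceil\lambda_2 m\rceil-\lceil\lambda_1 m\rceil\geq(\lambda_2-\lambda_1)m-1$ together with a careful tuning of the weights and of the growth of $(N_k,m_k)$ so that the contributions of distinct blocks to the $\lambda_2$-PG bound telescope into a single uniform constant. Parallel tasks are to verify unconditionality and semi-normalization of the combined basis, and to check that the construction is not right-skewed, in line with the remark following Question \ref{pr1}.
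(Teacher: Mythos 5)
The central gap is that your proposal never establishes the hard direction, and you say so yourself: you identify ``the main obstacle'' as proving the $\lambda_2$-PG inequality uniformly over all $x\in X$ and all $m$, and then leave its resolution as a wish (``the resolution should exploit\ldots'') rather than an argument. The paper avoids this difficulty entirely by invoking Theorem~\ref{pm1}: a basis is $\lambda$-PG if and only if it is quasi-greedy and $\lambda$-max conservative. Since an unconditional basis is automatically quasi-greedy, the whole problem collapses to a purely combinatorial inequality on indicator vectors, namely $\|1_A\|\leqslant C\|1_B\|$ for $A<B$ with $(\lambda-1)\max A+|A|\leqslant|B|$. There are no greedy sets, no arbitrary $x$, no cross-block interactions to control. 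This reduction is the decisive idea, and your plan does not use it; without it, a direct verification of $\lambda_2$-PG on a direct sum $(\bigoplus_k X_k)_{\ell_p}$ is exactly as hard as you fear.

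Two further points. First, your structural proposal is different from the paper's: the paper works in a single $c_{00}$-completion under a Schreier-type norm in which the admissible sets $F$ have $\min F=|F|=g_j$ and, critically, the indices in a ``gap'' $[g_{j+1}, g_{j+1}+a_2 g_j]$ are excised from the summation; this gap is what makes the basis $\lambda_2$-max conservative (Claim~\ref{cl1}) while killing $\lambda_1$-max conservativeness (Claim~\ref{cl2}, by placing $B$ inside the gap). It is not clear that a block direct sum with per-block Schreier norms can reproduce this asymmetry, and you have not specified the functionals concretely enough to check. Second, your preliminary observation is reversed: for an unconditional basis with nested greedy sets, if $r_1<r_2$ then $x-G_{r_2}(x)$ is coefficient-wise dominated by $x-G_{r_1}(x)$, so the bound goes $\|x-G_{r_2}(x)\|\leqslant K_u\|x-G_{r_1}(x)\|$, not the other way. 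Moreover the paper's failure of $\lambda_1$-PG does not hinge on non-uniqueness of greedy sets at all; the $\lambda$-max conservative condition is stated purely in terms of pairs $(A,B)$ of index sets and is tie-free. So the motivating heuristic that a counterexample ``must exploit non-uniqueness of the greedy set'' is not borne out, and the design constraint you derive from it is not the right one.
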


We remark that Theorem \ref{m1} differentiates the case of PG bases from the case of greedy bases, where we obtain the same property regardless of the $\lambda$ in \eqref{e2} as long as $\lambda > 1$.

We now turn to Question \ref{pr2} by first recalling relevant previous work. Companions to the PG bases are the so-called reverse partially greedy (RPG) bases introduced by Dilworth and Khurana \cite{DK}. Like the AG property, enlarging greedy sums in the original definition of RPG bases does not weaken the property. To obtain an analog of Theorem \ref{pm3} for RPG bases, \cite[Theorem 1.1]{C3} ``intervalizes" the property. We, therefore, suspect that the reformulation asked by Question \ref{pr2} must be an ``interval version" of the AG property. A natural candidate is the reformulation in \cite{BBC}. Denote the collection of all finite intervals of $\mathbb{N}$ by $\mathcal{I}$, which includes the empty set. 

\begin{thm}\cite[Theorem 2.8]{BBC}\label{BBCtheo}
A basis is almost greedy if and only if there is $C\geqslant 1$ such that
\begin{equation}\label{e40}\|x-G_m(x)\|\ \leqslant\ C\inf_{I\in \mathcal{I},|I|\leqslant m}\|x-P_I(x)\|,\forall x\in X,  m\in \mathbb{N},  G_m(x).\end{equation}
\end{thm}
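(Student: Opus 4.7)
The statement is an equivalence; I plan to handle the two implications separately.

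For $(\Longrightarrow)$, assume the basis is AG with constant $C_a$, and fix $x \in X$, $m \in \mathbb{N}$, a greedy set $G_m(x)$, and an interval $I$ with $|I| \leqslant m$. If $|I| = m$, the bound is immediate from AG applied with $A = I$. If $|I| < m$, I extend $I$ to a set $A$ of cardinality $m$ by appending $m - |I|$ indices drawn from a tail of $\mathbb{N}$ where the coefficients $|e_n^*(x)|$ are uniformly small; since $|e_n^*(x)| \to 0$, the difference $P_A(x) - P_I(x)$ has arbitrarily small norm, so AG applied to $A$ together with a limiting argument (note that the left-hand side $\|x - G_m(x)\|$ does not depend on $A$) produces the interval bound with the same constant $C_a$.

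For $(\Longleftarrow)$, assume \eqref{e40} with constant $C$. By Theorem~\ref{thmDKKT} it suffices to verify that the basis is quasi-greedy and democratic. Quasi-greediness is immediate from \eqref{e40} with $I = \emptyset$, giving $\|x - G_m(x)\| \leqslant C\|x\|$ for every $x$ and $m$.

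For democracy, let $A, B$ be disjoint finite sets with $|A| = |B| = m$ and write $\mathbf{1}_S := \sum_{n \in S} e_n$. One half of the comparison is easy: for any interval $J$ of size $m$ disjoint from $A$, the test vector $x = (1+\varepsilon)\mathbf{1}_A + \mathbf{1}_J$ has $G_m(x) = (1+\varepsilon)\mathbf{1}_A$ and residual $\mathbf{1}_J$; choosing $I = J$ in \eqref{e40} gives $\|\mathbf{1}_J\| \leqslant C(1+\varepsilon)\|\mathbf{1}_A\|$, so after $\varepsilon \downarrow 0$ every interval of size $m$ is dominated by every same-size set. The main obstacle is the reverse estimate $\|\mathbf{1}_A\| \leqslant C'\|\mathbf{1}_J\|$ against a reference interval: since a generic $A$ cannot be covered by any single interval of length $\leqslant m$, the mirror test vector $\mathbf{1}_A + (1+\varepsilon)\mathbf{1}_J$ with $I = J$ collapses to the trivial $\|\mathbf{1}_A\| \leqslant C\|\mathbf{1}_A\|$. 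Breaking this degeneracy will require richer test vectors (multiple weight levels, sign changes) combined with the quasi-greediness obtained above, or an iterative telescoping scheme that realizes $P_A$ as a bounded aggregation of interval projections. Once both bounds are in hand, chaining through a common reference interval $J$ placed past $\max(A \cup B)$ yields $\|\mathbf{1}_A\| \asymp \|\mathbf{1}_J\| \asymp \|\mathbf{1}_B\|$, which is democracy, and Theorem~\ref{thmDKKT} then delivers AG.
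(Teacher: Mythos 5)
Your forward direction and your quasi-greediness observation (via $I = \emptyset$) are both fine, and the overall reduction to Theorem~\ref{thmDKKT} is the right strategy. But the gap you flag yourself --- the estimate $\|1_A\| \leqslant C'\|1_B\|$ for a general set $A$ --- is real, and the test vectors you propose do not close it. The missing device is a \emph{gap filler}: for a nonempty finite set $A$, set $D := [\min A, \max A]\setminus A$, so that $A\cup D$ is an interval with $|A\cup D| = s(A)$. Now suppose $A, B$ are disjoint, $|A|\leqslant |B|$, and $B$ lies entirely outside $[\min A, \max A]$ (the paper's ``$B$ surrounds $A$''). Consider
$$y \ := \ 1_A + (1+\varepsilon)\,1_{B\cup D}.$$
Since $|D| = s(A)-|A|$ and $|B|\geqslant |A|$, the set $B\cup D$ is a greedy set of $y$ of order $m := |B|+|D| \geqslant s(A)$, with $y - G_m(y) = 1_A$. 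Taking $I := A\cup D$, an interval with $|I| = s(A)\leqslant m$, gives $y - P_I(y) = (1+\varepsilon)1_B$. Applying \eqref{e40} and letting $\varepsilon\downarrow 0$ yields $\|1_A\| \leqslant C\|1_B\|$, which is precisely ``$1$-democratic of type $2$'' (Definition~\ref{lambdademo} with $\lambda = 1$). Full democracy then follows by chaining, and here no extra use of quasi-greediness is needed: given arbitrary $A, B$ with $|A|\leqslant |B|$, pick any $D'$ with $D' > \max(A\cup B)$ and $|D'| = |B|$; then $D'$ surrounds $A$ so $\|1_A\|\leqslant C\|1_{D'}\|$, and $B$ surrounds $D'$ so $\|1_{D'}\|\leqslant C\|1_B\|$. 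Together with quasi-greediness and Theorem~\ref{thmDKKT}, this completes the backward implication. This filler construction is exactly the mechanism used in the paper's Lemma~\ref{l1}, which proves the more general Theorem~\ref{m2'}; the cited Theorem~\ref{BBCtheo} is its $\lambda = 1$ instance.
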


For conciseness, let us give a name to the Condition \eqref{e40} with enlarged greedy sums. 

\begin{defi}\normalfont\label{defiAG2}
A basis is said to be $\lambda$-almost greedy of type $2$ ($\lambda$-AG2, for short) if there is $C\geqslant 1$ such that
\begin{equation}\label{e5}\|x-G_{\lceil \lambda m\rceil}(x)\|\ \leqslant\ C\inf_{I\in \mathcal{I},|I|\leqslant m}\|x-P_I(x)\|,\forall x\in X, m\in \mathbb{N},  G_{\lceil \lambda m\rceil}(x).\end{equation}
\end{defi}

We can restate Theorem \ref{BBCtheo} as follows: a basis is AG if and only if it is $1$-AG2.
We answer Question \ref{pr2}. 
\begin{thm}\label{m10}
Let $\lambda > 1$. An AG basis is $\lambda$-AG2; however, there exists a $\lambda$-AG2 basis that is not AG. 
\end{thm}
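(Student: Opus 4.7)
This is nearly immediate from Theorem \ref{BBCtheo}. An AG basis satisfies the $1$-AG2 inequality with some constant $C$; applying this inequality with $\lceil\lambda m\rceil$ in place of $m$ and using that the family of intervals of size $\leqslant m$ is contained in the family of size $\leqslant\lceil\lambda m\rceil$, we get
\[
\|x - G_{\lceil\lambda m\rceil}(x)\| \;\leqslant\; C \inf_{I\in\mathcal{I},\,|I|\leqslant\lceil\lambda m\rceil}\|x - P_I(x)\| \;\leqslant\; C \inf_{I\in\mathcal{I},\,|I|\leqslant m}\|x - P_I(x)\|,
\]
which is $\lambda$-AG2 with the same constant $C$.

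\textbf{Converse direction --- strategy.} The plan is to construct a basis that fails democracy (hence fails AG by Theorem \ref{thmDKKT}) yet satisfies $\lambda$-AG2. The construction exploits the built-in asymmetry of the $\lambda$-AG2 inequality: the left-hand side enjoys $\lceil\lambda m\rceil-m>0$ extra greedy slots, whereas the right-hand side is constrained to \emph{intervals} of length at most $m$. A natural blueprint is a block-type basis on $c_{00}$ with blocks of suitably growing size. Inside each block we designate a small number of \emph{distinguished} indices that are weighted heavily in the norm but assigned slightly smaller coefficients than the surrounding \emph{ordinary} indices in the test vectors. One can take the basis unconditional, so quasi-greediness is automatic, and arrange non-democracy by letting the distinguished-coordinate weight grow with the block index.

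\textbf{Verification and main obstacle.} Failure of AG is easy to witness: for each $m$, exhibit a vector whose $m$-th greedy sum misses every distinguished index in its support, so that the residual is large while an optimal (non-interval) $m$-term projection can simply delete the distinguished indices and give a tiny residual. The main obstacle is the verification of $\lambda$-AG2 itself: for every $x$, $m$, and greedy set of order $\lceil\lambda m\rceil$, we must control $\|x - G_{\lceil\lambda m\rceil}(x)\|$ by $\inf_{|I|\leqslant m}\|x - P_I(x)\|$. The key quantitative estimate is that the $(\lambda-1)m$ extra greedy slots are enough to capture every distinguished index that the algorithm might otherwise miss in the effective support of $x$; once the distinguished indices are absorbed, the remaining (ordinary) residual can be dominated by that of an interval projection whose endpoints are aligned with the block structure. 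The delicate part is the calibration: blocks must be small enough that $\lceil\lambda m\rceil-m$ slots always cover the distinguished indices inside an $m$-sized window, yet large enough that non-democracy is genuine. Once this balance is tuned, the verification proceeds block by block by comparing the greedy residual to a suitably chosen interval projection of length $\leqslant m$.
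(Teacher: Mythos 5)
Your forward direction is correct and complete: $\mathrm{AG}\Leftrightarrow 1$-AG2 by Theorem \ref{BBCtheo}, apply the $1$-AG2 inequality with $\lceil\lambda m\rceil$ in place of $m$, and use that the infimum over intervals of length $\leqslant m$ dominates the infimum over intervals of length $\leqslant\lceil\lambda m\rceil$. This is essentially the intended argument.

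The converse direction, however, is not a proof but a plan, and the plan has a gap exactly where you flag it. You never exhibit a concrete norm on $c_{00}$, never specify block sizes, weights, or the coefficient calibration, and you explicitly defer ``the main obstacle'' (verifying $\lambda$-AG2 for every $x$, $m$, and greedy set of order $\lceil\lambda m\rceil$). Checking the $\lambda$-AG2 inequality directly over all $x$ is genuinely unwieldy, which is why the paper first proves the characterization Theorem \ref{m2'}: for a quasi-greedy (e.g., unconditional) basis, $\lambda$-AG2 is equivalent to $\lambda$-democracy of type $2$, so the verification collapses to a combinatorial estimate on $\|1_A\|$ versus $\|1_B\|$ for surrounding pairs. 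Your sketch does not invoke any such reduction, so even if you fixed a specific block construction you would still face the full $\lambda$-AG2 check for arbitrary $x$. Also, the conceptual blueprint differs from what is actually needed: the paper's example is not a block basis with heavily weighted ``distinguished'' coordinates inside each block and a greedy-versus-projection mismatch; it is a two-weight space where the norm assigns a slowly decaying weight sequence ($1/\sqrt{n}$) to the dyadic positions and a faster one ($1/n$) to the rest, after optimizing over permutations. Non-democracy is then witnessed directly by $\|1_{A_N}\|\sim\sqrt{N}$ on dyadic supports versus $\|1_{B_N}\|\sim\ln N$ on non-dyadic supports, and $\lambda$-democracy of type $2$ (for $\lambda>1$) follows because the surrounding hypothesis together with the extra cardinality $(\lambda-1)s(A)$ forces $B$ to absorb enough of the heavy weights. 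To turn your proposal into a proof you need (i) a reduction like Theorem \ref{m2'} so that only a democracy-type condition has to be checked, and (ii) a concrete norm with an explicit computation of both sides, neither of which is present.
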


With Theorem \ref{m10}, we now know that every greedy-type property: greedy, AG, PG, and PRG is weakened when the greedy sums are enlarged in either its original definition or its reformulation.  
Unlike Theorem \ref{m1}, we do not have a continuum of AG-type properties, as illustrated by the following proposition.
\begin{prop}\label{m2}
Let $1 < \lambda_1 < \lambda_2$. A basis is $\lambda_1$-AG2 if and only if it is $\lambda_2$-AG2. 
\end{prop}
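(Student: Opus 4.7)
The plan is to prove the two implications separately, using that each $\lambda$-AG2 hypothesis (with $\lambda>1$) forces quasi-greediness as a bridge.

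\textbf{Forward direction ($\lambda_1$-AG2 $\Longrightarrow$ $\lambda_2$-AG2).} Setting $I=\emptyset$ in the $\lambda_1$-AG2 inequality yields $\|x-G_{\lceil\lambda_1 m\rceil}(x)\|\leq C_1\|x\|$ for every $x,m$, so the greedy projections of orders in $\{\lceil\lambda_1 m\rceil\}_{m\geq 1}$ are uniformly bounded. Since consecutive such orders differ by at most $\lceil\lambda_1\rceil$, semi-normalization of $(e_n)$ bridges the gaps and yields quasi-greediness with some constant $C_{QG}$. Given $x$ and $m$, choose nested greedy sets $A\in\mathcal{G}(x,\lceil\lambda_1 m\rceil)\subset B\in\mathcal{G}(x,\lceil\lambda_2 m\rceil)$ and set $D:=B\setminus A$. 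A direct check shows $P_D(x)=G_{|D|}(x-P_A(x))$, so quasi-greediness gives $\|P_D(x)\|\leq C_{QG}\|x-P_A(x)\|\leq C_{QG}C_1\inf_{|I|\leq m}\|x-P_I(x)\|$ by $\lambda_1$-AG2, and the triangle inequality $\|x-P_B(x)\|\leq\|x-P_A(x)\|+\|P_D(x)\|$ delivers $\lambda_2$-AG2 with constant $(1+C_{QG})C_1$.

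\textbf{Reverse direction ($\lambda_2$-AG2 $\Longrightarrow$ $\lambda_1$-AG2).} The analogous triangle decomposition $\|x-P_A(x)\|\leq\|x-P_B(x)\|+\|P_D(x)\|$, combined with $\|P_D(x)\|\leq C_{QG}\|x-P_A(x)\|$, produces only the \emph{circular} estimate $(1-C_{QG})\|x-P_A(x)\|\leq C_2\inf_{|I|\leq m}\|x-P_I(x)\|$, which is useful only when $C_{QG}<1$. To sidestep this, I pick an interval $I^*\in\mathcal{I}$ with $|I^*|\leq m$ achieving $E:=\inf_{|I|\leq m}\|x-P_I(x)\|$ and set $u:=x-P_{I^*}(x)$, so that $\|u\|=E$. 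A direct verification (using that for $n\in A\setminus I^*$ and $n'\notin A\cup I^*$, the greedy inequality $|e_n^*(x)|\geq|e_{n'}^*(x)|$ transfers to $u$) shows $A\setminus I^*$ is a greedy set of $u$ of order $|A\setminus I^*|$. This yields the decomposition
\[
x-P_A(x)=\bigl(u-G_{|A\setminus I^*|}(u)\bigr)+P_{I^*\setminus A}(x),
\]
and quasi-greediness applied to $u$ controls the first summand by $(1+C_{QG})E$.

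The main technical obstacle is bounding $\|P_{I^*\setminus A}(x)\|$ by a constant multiple of $E$. Here $I^*\setminus A$ is a union of at most $|A\cap I^*|+1$ subintervals of $I^*$, obtained by deleting the greedy coordinates $A\cap I^*$ from the interval $I^*$. I would handle this via the identity $P_{I^*\setminus A}(x)=P_{I^*}(x)-P_{A\cap I^*}(x)$ together with a second application of $\lambda_2$-AG2 to $u$ at a suitably chosen order, using that the coordinates $A\cap I^*$ are greedy coordinates of $x$ (and so their projection is controllable by quasi-greediness). Assembling the pieces would give $\|P_{I^*\setminus A}(x)\|\leq C'''E$ for a constant depending only on $C_2$, $C_{QG}$, $\lambda_1$, $\lambda_2$, and the basis constants; this bookkeeping for $P_{I^*\setminus A}(x)$ is the principal source of technical difficulty in the reverse direction.
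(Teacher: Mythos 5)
Your forward direction ($\lambda_1$-AG2 $\Rightarrow\lambda_2$-AG2) is essentially sound: you correctly observe that within any $B\in\mathcal{G}(x,\lceil\lambda_2 m\rceil)$ one can extract a nested $A\in\mathcal{G}(x,\lceil\lambda_1 m\rceil)$, that $D=B\setminus A$ is then a greedy set for $x-P_A(x)$, and that the triangle inequality closes the loop. (Your derivation of quasi-greediness from boundedness on a subsequence of orders is informal; the paper delegates this to a known result of Oikhberg, but the idea is right.) The paper, by contrast, obtains this direction for free: through Theorem 2.7 (the characterization $\lambda$-AG2 $\Leftrightarrow$ quasi-greedy + $\lambda$-democratic of type 2), the implication is immediate because $(\lambda_2-1)s(A)+|A|\leqslant|B|$ trivially implies $(\lambda_1-1)s(A)+|A|\leqslant|B|$.

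The reverse direction ($\lambda_2$-AG2 $\Rightarrow\lambda_1$-AG2) is where the real content lies, and your argument there is not a proof but an unfinished plan. After choosing a near-minimizer $I^*$, setting $u=x-P_{I^*}(x)$, and decomposing $x-P_A(x)=(u-G_{|A\setminus I^*|}(u))+P_{I^*\setminus A}(x)$, you control the first term by quasi-greediness, but you explicitly leave the crucial bound $\|P_{I^*\setminus A}(x)\|\lesssim\|u\|$ unproved, offering only the remark that you ``would handle this'' by a second application of $\lambda_2$-AG2 and that ``assembling the pieces would give'' the result. That is precisely the obstruction, and your outline does not show how to overcome it: $P_{I^*\setminus A}(x)$ vanishes under $P_{\mathbb{N}\setminus I^*}$, so it cannot be recovered from $u$, and $A\cap I^*$ is not a greedy set of $x$, so quasi-greediness does not directly control $\|P_{A\cap I^*}(x)\|$ either. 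The paper sidesteps the difficulty entirely by passing to the democracy formulation: given $(\lambda_1-1)s(A)+|A|\leqslant|B|$, it partitions $[\min A,\max A]$ into roughly $\lceil(\lambda_2-1)/(\lambda_1-1)\rceil$ subintervals $I_j$, so that each $A_j=A\cap I_j$ has $s(A_j)$ small enough to invoke $\lambda_2$-democracy of type 2 against a mildly enlarged $B'$, and then sums the estimates. This combinatorial shrinking of $s(A)$ is the key idea missing from your approach, and it is unclear your direct route can supply an equivalent.
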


As an intermediate step to prove Theorem \ref{m10} and Proposition \ref{m2}, we characterize $\lambda$-AG2 bases, using what we called the $\lambda$-symmetry for largest coefficients of type $2$ ($\lambda$-SLC2) (Definition \ref{SLC2defi}) and $\lambda$-democracy of type $2$ (Definition \ref{lambdademo}).

\begin{thm}\label{m2'}
Let $\lambda\geqslant 1$. The following are equivalent:
\begin{enumerate}
\item [i)] a basis is $\lambda$-AG2;
\item [ii)] a basis is quasi-greedy and $\lambda$-SLC2;
\item [iii)] a basis is quasi-greedy and $\lambda$-democratic of type $2$. 
\end{enumerate}
\end{thm}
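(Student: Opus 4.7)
The plan is to mirror the classical Dilworth--Kalton--Kutzarova--Temlyakov characterization (Theorem \ref{thmDKKT}), adapting each step to the interval structure of the type-$2$ estimate and to the enlargement factor $\lambda$. I will prove (i) $\Rightarrow$ (ii) $\Leftrightarrow$ (iii) $\Rightarrow$ (i). The equivalence (ii) $\Leftrightarrow$ (iii) is essentially standard: under quasi-greediness, the usual truncation-and-resigning arguments let one pass between SLC-type and democracy-type comparisons, and since these arguments depend only on supports and signs, they transfer verbatim to the type-$2$ setting. For (i) $\Rightarrow$ (ii), setting $I = \emptyset$ in \eqref{e5} yields $\|x - G_{\lceil \lambda m\rceil}(x)\| \leqslant C\|x\|$, which is a quasi-greediness bound restricted to orders of the form $\lceil \lambda m\rceil$; when $\lambda > 1$ these orders skip integers, and I bridge the gap by the perturbation trick from \cite{C1} (rescale coefficients of $x$ so that a given greedy set of arbitrary order embeds into a greedy set of some order $\lceil \lambda m\rceil$ of the perturbed vector, apply the bound, then take limits). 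The property $\lambda$-SLC2 is then read off by testing \eqref{e5} on vectors of the form $\sum_{n \in A}\varepsilon_n e_n + \sum_{n \in I}\eta_n e_n$ with $A$ and $I$ disjoint, $|A| = \lceil \lambda m\rceil$, and $I$ an interval with $|I| \leqslant m$: here $A$ is automatically a greedy set and the residuals $x - G_{\lceil \lambda m\rceil}(x)$ and $x - P_I(x)$ collapse into the two sides of an SLC2 comparison.

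For (iii) $\Rightarrow$ (i), the main implication, fix $x$, $m$, a greedy set $A$ of $x$ of order $\lceil \lambda m\rceil$, and an interval $I$ with $|I| \leqslant m$, and decompose
\[
x - G_{\lceil \lambda m\rceil}(x) \;=\; (x - P_I(x)) \;+\; P_{I \setminus A}(x) \;-\; P_{A \setminus I}(x).
\]
Since $A \setminus I$ is a greedy set of $x - P_I(x)$, quasi-greediness bounds $\|P_{A \setminus I}(x)\|=\|P_{A \setminus I}(x - P_I(x))\|$ by $C\|x - P_I(x)\|$. For the remaining term, let $\alpha = \min_{n \in A}|e_n^*(x)|$; since the coefficients of $x$ on $I \setminus A$ are $\leqslant \alpha$ while those on $A \setminus I$ are $\geqslant \alpha$, the truncation consequence of quasi-greediness produces both $\|P_{I \setminus A}(x)\| \leqslant C \alpha \, \|\sum_{n \in I \setminus A}\varepsilon_n e_n\|$ and $\alpha \, \|\sum_{n \in A \setminus I}\eta_n e_n\| \leqslant C\|x - P_I(x)\|$ for appropriate signs $\varepsilon, \eta$. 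Chaining these with the inequality $\|\sum_{n \in I \setminus A}\varepsilon_n e_n\| \leqslant C\|\sum_{n \in I}\varepsilon_n e_n\|$ (any subset of $I$ is a greedy set of $\sum_{n \in I}\varepsilon_n e_n$, so quasi-greediness applies directly) and then invoking $\lambda$-democracy of type $2$ to compare the interval $I$ with the set $A \setminus I$ closes the loop.

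I expect the main obstacle to be the size bookkeeping in this last step: $\lambda$-democracy of type $2$ demands that the set compared against the interval be ``$\lambda$-large,'' so one must argue that $|A \setminus I| \geqslant \lceil \lambda m\rceil - |I \cap A|$ is large enough relative to $|I| \leqslant m$ to trigger the inequality. This is where the factor $\lambda$ on the greedy-order side becomes quantitatively essential, and it is precisely this matching that motivates the very definition of $\lambda$-democracy of type $2$; a secondary delicate point is the extraction of \emph{full} quasi-greediness in (i) $\Rightarrow$ (ii) from an estimate that a priori only controls orders of the form $\lceil \lambda m\rceil$, handled by the perturbation argument sketched above.
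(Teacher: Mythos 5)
Your high-level strategy, a cycle $\mathrm{(i)}\Rightarrow\mathrm{(ii)}\Leftrightarrow\mathrm{(iii)}\Rightarrow\mathrm{(i)}$, matches the paper's split into Lemma~\ref{l1}, Lemma~\ref{l2} and Proposition~\ref{p1}. But two concrete steps in your sketch do not go through as written, and one of them you flag yourself without resolving it.

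First, for $\mathrm{(i)}\Rightarrow\mathrm{(ii)}$ you propose testing \eqref{e5} on vectors of the form $\sum_{n\in A}\varepsilon_n e_n + \sum_{n\in I}\eta_n e_n$ with $I$ an interval. This only yields a democracy-type comparison between an \emph{interval} $I$ and a set $A$. Both $\lambda$-SLC2 and $\lambda$-democracy of type~$2$ involve a \emph{general} finite set $A$ whose diameter $s(A)$ appears in the size constraint $(\lambda-1)s(A)+|A|\leqslant|B|$; such an $A$ need not be an interval, and your test vectors never produce one. The paper handles this with the fill-in trick: set $y := x+1_{\varepsilon A}+1_{\delta B}+1_D$ where $D=[\min A,\max A]\setminus A$, so that $A\cup D$ is an interval of length $s(A)$ and $B\cup D$ is a greedy set of $y$ of size at least $\lambda s(A)$, after which one discards $1_D$ using quasi-greediness. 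Without inserting $1_D$ you cannot touch non-interval sets $A$, nor can you accommodate the background vector $x$ that appears in the SLC2 definition. So your extraction of $\lambda$-SLC2 (or even of $\lambda$-democracy of type~$2$) is incomplete.

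Second, in $\mathrm{(iii)}\Rightarrow\mathrm{(i)}$ you route the key estimate through $\|1_I\|$ and then compare the whole interval $I$ against $A\setminus I$ via $\lambda$-democracy of type~$2$. That comparison needs $(\lambda-1)s(I)+|I|=\lambda|I|\leqslant|A\setminus I|$, and this can fail: take $\lambda m\in\mathbb{N}$, $|I|=m$, $I\subset A$; then $|A\setminus I|=\lceil\lambda m\rceil-m=(\lambda-1)m<\lambda m$. This is exactly the ``size bookkeeping'' you warn about, and it is a genuine obstruction, not a routine check. The correct pairing, and the one the paper uses implicitly in Lemma~\ref{l2}, is $I\setminus A$ against $A\setminus I$: one has $s(I\setminus A)\leqslant s(I)=|I|\leqslant m$ and
\[
|A\setminus I|\;=\;|A|-|A\cap I|\;\geqslant\;\lambda m - m + |I\setminus A|\;\geqslant\;(\lambda-1)\,s(I\setminus A)+|I\setminus A|,
\]
while $A\setminus I\subset I^{\mathsf c}$ surrounds $I\setminus A\subset I$. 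Your intermediate step $\|1_{I\setminus A}\|\leqslant C\|1_I\|$ is then unnecessary and is precisely what pushes you into the bad comparison. Once these two repairs are made your outline agrees in substance with the paper's proof, which proves $\mathrm{(ii)}\Rightarrow\mathrm{(i)}$ via the truncation operator and $\lambda$-SLC2 rather than going directly from democracy, but the underlying decomposition $x-P_\Lambda(x)=(x-P_I(x))+P_{I\setminus\Lambda}(x)-P_{\Lambda\setminus I}(x)$ and the use of the UL-property \eqref{e9} are the same.
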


Here is a corollary of Proposition \ref{m2} and Theorem \ref{m2'}:
\begin{cor}\label{ocor}
The following are equivalent:
\begin{enumerate}
\item [i)] a basis is $\lambda$-AG2 for (some) all $\lambda > 1$;
\item [ii)] a basis is quasi-greedy and $\lambda$-SLC2 for (some) all $\lambda > 1$;
\item [iii)] a basis is quasi-greedy and $\lambda$-democratic of type $2$ for (some) all $\lambda > 1$. 
\end{enumerate}
\end{cor}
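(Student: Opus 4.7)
The plan is to combine Proposition \ref{m2} with Theorem \ref{m2'} in a direct chain, tracking the two quantifier regimes (``some $\lambda > 1$'' versus ``all $\lambda > 1$'') carefully. The content of Proposition \ref{m2} is that for $\lambda > 1$, being $\lambda$-AG2 does not depend on the particular value of $\lambda$, so item (i) with ``some'' and item (i) with ``all'' are already equivalent. Theorem \ref{m2'} then supplies, for every fixed $\lambda \geqslant 1$, the pointwise equivalences (i) $\Leftrightarrow$ (ii) $\Leftrightarrow$ (iii).

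First I would dispose of the ``all $\lambda > 1$'' version: since at each $\lambda > 1$ the three conditions are mutually equivalent by Theorem \ref{m2'}, the conjunctions over all such $\lambda$ are also mutually equivalent. Next I would upgrade the ``some'' version in each of (i), (ii), (iii) to the ``all'' version. The case of (i) is immediate from Proposition \ref{m2}. For (ii), suppose the basis is quasi-greedy and $\lambda_0$-SLC2 for some $\lambda_0 > 1$. Apply Theorem \ref{m2'} at $\lambda_0$ to deduce $\lambda_0$-AG2, then apply Proposition \ref{m2} to obtain $\lambda$-AG2 for every $\lambda > 1$, then apply Theorem \ref{m2'} again at each such $\lambda$ to recover quasi-greediness and $\lambda$-SLC2. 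The case of (iii) is handled identically, replacing $\lambda$-SLC2 with $\lambda$-democracy of type 2 throughout.

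The converse direction of each ``(some) $\Leftrightarrow$ (all)'' is trivial by specialization, and the cross-equivalences (i) $\Leftrightarrow$ (ii) $\Leftrightarrow$ (iii) at the level of ``some'' follow from composing the ``some $\Rightarrow$ all'' implications above with the ``all $\Rightarrow$ some'' specializations. I expect no genuine obstacle, since the two ingredients do exactly the work required; the only thing to be careful about is that Theorem \ref{m2'} must be invoked twice (once at $\lambda_0$ to leave the SLC2/democracy world, and once at each target $\lambda$ to re-enter it), with Proposition \ref{m2} bridging the two invocations.
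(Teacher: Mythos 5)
Your proposal is correct and matches the paper's intent exactly: the corollary is stated without proof as an immediate consequence of Proposition \ref{m2} and Theorem \ref{m2'}, and your careful bookkeeping of the ``some'' versus ``all'' quantifiers — using Theorem \ref{m2'} at a fixed $\lambda_0$ to pass to the AG2 world, Proposition \ref{m2} to spread across all $\lambda > 1$, and Theorem \ref{m2'} again to return — is precisely the argument the authors have in mind.
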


We are also interested in the isometric theory. We show that the equivalence between ``$\lambda$-AG2 with constant $1$" and ``$\lambda$-SLC2 with constant $1$" holds if and only if $\lambda\leqslant 2$. Formally, 

\begin{thm}\label{m11}
For every $\lambda \geqslant 1$, we have the implication 
$$\lambda\mbox{-AG2 with constant 1}\Longleftarrow\ \lambda\mbox{-SLC2 with constant 1}.$$

For $\lambda \leqslant 2$, the following equivalence holds
$$\lambda\mbox{-AG2 with constant 1}\Longleftrightarrow\ \lambda\mbox{-SLC2 with constant 1}.$$

For every $\lambda > 2$, there exists a basis that is $\lambda$-AG2 with constant $1$ but is not $\lambda$-SLC2 with constant $1$. 
\end{thm}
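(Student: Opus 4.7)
The plan is to establish the three assertions separately.

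For the universal implication ``$\lambda$-SLC2 with constant 1 $\Longrightarrow$ $\lambda$-AG2 with constant 1,'' I would adapt the standard bridge from a symmetry property to its approximation companion. Fix $x \in X$, $m \in \mathbb{N}$, a greedy set $A \in \mathcal{G}(x, \lceil \lambda m\rceil)$, and an interval $I \in \mathcal{I}$ with $|I| \leqslant m$; split
$$x - P_A(x) \ =\ P_{(A \cup I)^c}(x) + P_{I \setminus A}(x), \qquad x - P_I(x) \ =\ P_{(A \cup I)^c}(x) + P_{A \setminus I}(x).$$
The greedy condition bounds every coefficient of $P_{I \setminus A}(x) + P_{(A \cup I)^c}(x)$ in absolute value by $\alpha := \min_{n \in A}|e_n^*(x)|$, while the coefficients on $A \setminus I$ are all at least $\alpha$. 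After normalizing by $\alpha$ and a convex-combination / sign-averaging reduction to coefficients of modulus $1$, $\lambda$-SLC2 with constant 1 exchanges the contribution on $I \setminus A$ (a union of at most two subintervals of $I$, handled by two iterations) for the contribution on $A \setminus I$, delivering the target inequality with constant $1$.

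For the converse in the range $\lambda \leqslant 2$, I would use a test-vector construction. Given $x$ with $\|x\|_\infty \leqslant 1$ together with a prescribed SLC2 configuration $(A,I,\varepsilon,\varepsilon')$, choose a small $\delta > 0$ and a dummy set $D$ of cardinality $\lceil \lambda m\rceil - |A|$ placed adjacent to $I$ so that $J := D \cup I$ forms an interval disjoint from $A \cup \supp(x)$. Set
$$y\ :=\ (1+2\delta)\sum_{n \in A}\varepsilon_n e_n\ +\ (1+\delta)\sum_{n \in D} e_n\ +\ \sum_{n \in I}\varepsilon'_n e_n\ +\ x.$$
The strict ordering $1 + 2\delta > 1 + \delta > 1 \geqslant \|x\|_\infty$ makes $A \cup D$ the unique greedy set of $y$ of order $\lceil \lambda m\rceil$, so applying $\lambda$-AG2 with constant 1 against $J$ produces
$$\Bigl\|x + \sum_{n \in I}\varepsilon'_n e_n\Bigr\|\ =\ \|y - P_{A \cup D}(y)\|\ \leqslant\ \|y - P_J(y)\|\ =\ \Bigl\|x + (1+2\delta)\sum_{n \in A}\varepsilon_n e_n\Bigr\|,$$
and passing to $\delta \to 0^+$ gives the SLC2 inequality. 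The admissibility $|J| = |D|+|I| \leqslant m$ translates, in the extremal regime of the SLC2 parameters, into $\lceil \lambda m\rceil \leqslant 2m$, which pins down exactly $\lambda \leqslant 2$.

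For the third assertion, I would construct a witness basis for $\lambda > 2$ on a block-diagonal direct sum of finite-dimensional gadgets. Each gadget $X_k$ is tailored to a scale $m_k$ and equipped with a norm that imposes a strict penalty on signed sums over unstructured supports of size strictly exceeding $2m_k$. Since $\lceil \lambda m\rceil - m > m$ precisely when $\lambda > 2$, this penalty is inactive on every greedy-set / interval comparison controlled by $\lambda$-AG2 with constant 1 (because the greedy ordering forces a structured relationship between the greedy set and any competing interval) but is detected by the SLC2 inequality on the prescribed extremal pair $(A_k, I_k)$ with $|A_k| = \lceil \lambda m_k\rceil$ and $|I_k| = m_k$. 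The main obstacle will be verifying that the engineered penalty does not inadvertently destroy the isometric AG2 inequality on some other greedy-set / interval pair, which I anticipate handling via a careful finite-combinatorial case analysis inside each gadget, then propagating to the direct sum.
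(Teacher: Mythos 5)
Your proposal splits into three pieces, so let me address them in turn.

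\textbf{The implication $\lambda$-SLC2(1) $\Rightarrow$ $\lambda$-AG2(1).} The decomposition $x-P_A(x)=P_{(A\cup I)^c}(x)+P_{I\setminus A}(x)$ versus $x-P_I(x)=P_{(A\cup I)^c}(x)+P_{A\setminus I}(x)$ is the right starting point, and the paper's Lemma~\ref{l2} does essentially this. However, two items in your sketch are off. First, the remark that $I\setminus A$ is ``a union of at most two subintervals handled by two iterations'' is both false in general (the greedy set $A$ may meet the interval $I$ in arbitrarily many scattered places, so $I\setminus A$ can have arbitrarily many components) and beside the point: Definition~\ref{SLC2defi} imposes no interval structure on the surrounded set, only that $B$ and $x$ avoid $[\min A,\max A]$. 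What the interval hypothesis $I\in\mathcal{I}$ actually buys is that $A\setminus I$ \emph{surrounds} $I\setminus A$, which is exactly condition~c) of Proposition~\ref{SLC2redefi}; you should verify this condition rather than try to iterate. Second, the ``normalizing by $\alpha$ and sign-averaging'' step is precisely where the truncation operator $T_a$ enters, and $\|T_a\|\leqslant 1$ requires the basis to be suppression quasi-greedy with constant $1$. That fact is not free: it is Proposition~\ref{implysuppquasi}, which you must extract from $\lambda$-SLC2(1) by taking $A=\emptyset$ and singletons $B$. Without it, your normalization step silently loses the constant.

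\textbf{The converse for $\lambda\leqslant 2$.} Your test-vector construction is a genuinely different route from the paper's. The paper does not try to derive the full SLC2 inequality directly from AG2(1); it only proves that $\lambda$-AG2 with constant~$1$ forces suppression quasi-greediness with constant~$1$ (via the two-element trick $x=y+e_k+e_\ell$ combined with $\lceil\lambda\rceil\leqslant 2$), and then feeds this into the already-established Lemma~\ref{l1}, which upgrades the non-isometric implication to the isometric one. Your approach, by contrast, aims to recover the SLC2 inequality outright. The idea is natural, but there is a real gap: for $J=D\cup I$ to be an interval disjoint from $A\cup\supp(x)$ you need $D$ to fill all gaps in $[\min I,\max I]$ and possibly extend beyond; the size matching $|D|=\lceil\lambda m\rceil-|A|$ together with $|J|\leqslant m$ then overconstrains $|A|$ for any fixed $m$, and when $A$ or $\supp(x)$ abuts both endpoints of $[\min I,\max I]$ you cannot extend $J$ at all, leaving configurations (certain values of $|A|$, and $x$ with support on both sides of $I$) that your single test vector does not reach. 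You can salvage this by first establishing suppression quasi-greediness with constant~$1$ (take $I=\emptyset$, $|A|=1$ in your construction, then induct), and then reducing to the extremal $|A|$, but at that point you have reproduced the paper's two-step structure in a more laborious form.

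\textbf{The counterexample for $\lambda>2$.} Here your proposal is a plan rather than a construction, and the heavy lifting — verifying that the ``penalty'' leaves the isometric AG2 inequality intact for every greedy-set/interval pair — is exactly what is deferred. You should be aware that the paper's witness is far simpler than a block-diagonal scheme: a single three-term norm on $c_{00}$,
\[
\|x\|\ =\ \Bigl|\tfrac{x_1}{\lambda}+x_2\Bigr|\vee\Bigl|x_1+\tfrac{x_2}{\lambda}\Bigr|\vee\tfrac{1}{\lambda}\sum_{i\geqslant 1}|x_i|,
\]
with only two ``special'' coordinates. The arithmetic $\lceil\lambda m\rceil-m\geqslant (\lambda-1)m>m$ for $\lambda>2$ is used exactly once, in the case analysis (Case~3), to show that enough mass survives on the $\ell_1$-part to dominate the two coupled terms; the failure of suppression quasi-greediness with constant~$1$ is checked by a single choice of $x,y$ in $\mathbb{R}^2$. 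Unless you carry out your gadget analysis in full, the third assertion remains unproved in your write-up.
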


Our paper is structured as follows: Section \ref{differentiatePG} proves Theorem \ref{m1}; Section \ref{AG2} first proves Theorem \ref{m2'}, which will then be used to establish Theorem \ref{m10} and Proposition \ref{m2}; Section \ref{isometry} is devoted to Theorem \ref{m11}; finally, Section \ref{(R)PG} addresses PG and RPG bases, including the implications in \eqref{e25}.

We conclude this section by recalling several building blocks of the greedy-type properties mentioned earlier.

\begin{defi}\label{uncondefi}\normalfont
A basis $(e_n)_{n=1}^\infty$ is said to be unconditional with constant $C$ if we have
$\|\sum_{n=1}^N a_n e_n\| \leqslant C\|\sum_{n=1}^N b_ne_n\|$,
for all $N\in \mathbb{N}$ and for all scalar sequences $(a_n)_{n=1}^N$ and $(b_n)_{n=1}^N$ with $|a_n|\leqslant |b_n|$.
\end{defi}

For $A\in \mathbb{N}^{<\infty}$, we write $1_A$ to mean the vector $\sum_{n\in A}e_n$. 

\begin{defi}\label{democraticdefi}\normalfont
A basis $(e_n)_{n=1}^\infty$ is said to be democratic with constant $C$ if $\|1_A\| \leqslant C\|1_B\|$, 
for all $A, B\in \mathbb{N}^{<\infty}$ with $|A|\leqslant |B|$.
\end{defi}

\begin{defi}\label{quasidefi}\normalfont
A basis is said to be quasi-greedy with constant $C$ if 
$\|G_m(x)\|\leqslant C\|x\|$ for all $x\in X, m\in \mathbb{N}$, and $G_m(x)$. 

A basis is said to be suppression quasi-greedy with constant $C$ if 
$\|x-G_m(x)\|\leqslant C\|x\|$ for all $x\in X, m\in \mathbb{N}$, and $G_m(x)$. 
\end{defi}

Clearly, an unconditional basis is quasi-greedy; however, there exists a quasi-greedy basis that is not unconditional \cite{KT1}.

%%%%%%%%%%%%%%%%%%%%%%%%%%%%%%%%%%%%%%%%%%%%%%%%%%%%%%%%%%%%%%%%%%%%%%%%%%%%%%%%%%%%%%%%%%%%%%%%%%%%%%%%%%%%%%%%%%%%%%%%%%%%%%%%%%%%%%%%%%%%%%%%%%%%%%%%%%%%%%%%%%%%%%%%%%%%%%%%%%%%%%%%%%%%%%%%%%%%%%%%%%%%%%%%%%%%%%%%%%%%%%%%%%%%%%%%%%%%%%%%%%%%%%%%%%%%%%%%%%%%%%%%%%%%%%%%%%%%%%%%%%%%%%%%%%%%%%%%%%%%%%%%%%%%%%%%%%%%%%%%%%%%%%%%%%%%%%%%%%%%%%%%%%%%%%%%%%%%%%%%%
\section{The space $X_{\lambda_1, \lambda_2}$ whose basis is $\lambda_2$-PG but not $\lambda_1$-PG}\label{differentiatePG}

In this section, we prove Theorem \ref{m1}. To do so, we construct a space, denoted by $X_{\lambda_1, \lambda_2}$, whose unit vector basis is unconditional with constant $1$ and is $\lambda_2$-PG but not $\lambda_1$-PG. We need the following characterization of $\lambda$-PG, which involves the $\lambda$-max conservative property. We write $A < B$ to mean that $a < b$ for all $a\in A$ and $b\in B$ and write $b < A$ to mean that $b < a$ whenever $a\in A$. Similar notation applies for other inequalities.  

\begin{defi}\normalfont
A basis is $\lambda$-max conservative if there exists $C > 0$ such that 
$$\|1_A\|\ \leqslant\ C\|1_B\|, \forall A, B\in \mathbb{N}^{<\infty}, A < B, (\lambda-1)\max A + |A|\leqslant |B|.$$
\end{defi}

\begin{thm}\cite[Theorem 1.7]{C1}\label{pm1} Let $\lambda\geqslant 1$. A basis is $\lambda$-PG if and only if it is quasi-greedy and $\lambda$-max conservative. 
\end{thm}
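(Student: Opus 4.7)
The plan is to prove both directions separately, with the implication from quasi-greedy plus $\lambda$-max conservativeness to $\lambda$-PG being the substantive one.

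For the forward direction, I would first establish quasi-greediness by plugging $n = 0$ into the $\lambda$-PG inequality to obtain $\|x - G_{\lceil \lambda m\rceil}(x)\|\leqslant C\|x\|$ for every $x$, every $m$, and every greedy set of order $\lceil \lambda m\rceil$. Since consecutive orders of the form $\lceil \lambda m\rceil$ differ by at most $\lceil \lambda\rceil + 1$, and semi-normalization bounds each $|e_i^*(x)|$ by a constant multiple of $\|x\|$, one can interpolate to get suppression quasi-greediness at every order, hence quasi-greediness. For $\lambda$-max conservativeness, fix $A < B$ with $(\lambda-1)\max A + |A|\leqslant |B|$ and use the test vector $x = 1_A + (1+\varepsilon)1_B$ with partial-sum index $n = \max A$ and greedy order $\lceil \lambda m\rceil$ (for a suitable $m \geqslant \max A$). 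The arithmetic condition is precisely what ensures the greedy algorithm leaves enough $B$-mass in the error term; after passing $\varepsilon\to 0$, combining with the suppression bound $\|1_{B\setminus B_0}\|\leqslant C'\|1_B\|$ (a consequence of quasi-greediness applied to $1_B$), and applying the triangle inequality, one obtains $\|1_A\|\leqslant C''\|1_B\|$. When $A$ is not a full interval, an extra step using the ambiguity of the greedy algorithm on equal-coefficient blocks, or the unconditionality-for-constant-coefficients property of quasi-greedy bases, handles the non-full case.

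For the reverse direction, fix $x\in X$, $m\in \mathbb{N}$, a greedy set $A$ of order $\lceil \lambda m\rceil$, and $n\in\{0,1,\ldots,m\}$, and decompose
$$x - G_{\lceil \lambda m\rceil}(x)\ =\ \bigl(x - P_n(x)\bigr)\ +\ \bigl(P_n(x) - G_{\lceil \lambda m\rceil}(x)\bigr).$$
The second summand equals $\sum_{i\in A_1} e_i^*(x)e_i - \sum_{i\in A_2} e_i^*(x)e_i$ with $A_1 = \{1,\ldots,n\}\setminus A$ and $A_2 = A\setminus\{1,\ldots,n\}$. The decisive observation is the arithmetic: $A_1 < A_2$, $\max A_1\leqslant n\leqslant m$, and $|A_2| - |A_1| = \lceil \lambda m\rceil - n\geqslant (\lambda - 1)m\geqslant (\lambda-1)\max A_1$, so $(\lambda-1)\max A_1 + |A_1|\leqslant |A_2|$ exactly matches the $\lambda$-max conservative hypothesis and yields $\|1_{A_1}\|\leqslant C\|1_{A_2}\|$. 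Combining this with the standard quasi-greedy truncation lemma (using $\alpha = \min_{i\in A}|e_i^*(x)|$, so $|e_i^*(x)|\leqslant \alpha$ on $A_1$ and $|e_i^*(x)|\geqslant \alpha$ on $A_2$), one converts the estimate into a bound on $\|P_n(x) - G_{\lceil \lambda m\rceil}(x)\|$ in terms of $\|\sum_{i\in A_2} e_i^*(x)e_i\|$, which in turn is controlled by $\|x - P_n(x)\|$ through quasi-greediness, since $A_2$ lies entirely outside $\{1,\ldots,n\}$.

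The main obstacle is the bookkeeping in the reverse direction: one must verify that the condition $(\lambda-1)\max A + |A|\leqslant |B|$ is sharp enough to apply under the decomposition $A_1, A_2$ while still being weak enough to be derivable from $\lambda$-PG. The definition of $\lambda$-max conservativeness is engineered exactly so that the quantities arising from $(n, \lceil \lambda m\rceil, A\cap\{1,\ldots,n\})$ satisfy the hypothesis for free; matching this to the ``extra room'' $(\lambda-1)m$ provided by enlarging the greedy sum is the conceptual core of the theorem. In the forward direction, the most delicate point is handling non-full $A$ in the $\lambda$-max conservative deduction, which requires either exploiting ambiguity of the greedy choice or invoking UCC-type estimates from quasi-greediness.
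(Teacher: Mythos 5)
The present paper cites this statement from \cite[Theorem 1.7]{C1} without reproducing a proof, so I evaluate your proposal on its own terms. Your reverse direction is correct and tight: with $\Lambda\in\mathcal{G}(x,\lceil\lambda m\rceil)$, $A_1 = \{1,\dots,n\}\setminus\Lambda$, and $A_2 = \Lambda\setminus\{1,\dots,n\}$, you do indeed have $A_1 < A_2$, $\max A_1\leqslant n\leqslant m$, and $|A_2|-|A_1| = \lceil\lambda m\rceil - n\geqslant(\lambda-1)m\geqslant(\lambda-1)\max A_1$, which is exactly the $\lambda$-max conservative hypothesis; combining $\|1_{A_1}\|\leqslant C\|1_{A_2}\|$ with the UL inequality \eqref{e9} at level $\alpha=\min_{i\in\Lambda}|e_i^*(x)|$ and the observation that $A_2$ is a greedy set of $x-S_n(x)$ closes the estimate. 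The quasi-greediness part of the forward direction is also fine and is exactly what \cite[Proposition 4.1]{O} (already invoked in this paper's proof of Lemma~\ref{l1}) delivers.

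The one real gap is in the $\lambda$-max conservative deduction. With $x = 1_A + (1+\varepsilon)1_B$, $n=\max A$, and any admissible $m\geqslant\max A$, the greedy set of order $\lceil\lambda m\rceil$ avoids $A$ only if $\lceil\lambda m\rceil\leqslant|B|$, and the hypothesis $(\lambda-1)\max A+|A|\leqslant|B|$ forces this only when $|A|=\max A$. Worse, the ``ambiguity'' escape you mention does not help with this test vector: in $1_A+(1+\varepsilon)1_B$ the positions of $\{1,\dots,\max A\}\setminus A$ carry coefficient $0$, so they sit \emph{below} $A$ in the greedy order and the greedy set bites into $A$ before ever reaching them. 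Your UCC instinct is the right one, but it must be paired with a padded test vector. Take $x = 1_{A'}+(1+\varepsilon)1_B$ with $A'=\{1,\dots,\max A\}$ and $n=m=\max A$. Then $|B|+|A'\setminus A|\geqslant(\lambda-1)\max A+|A|+(\max A-|A|)=\lambda\max A$, and being an integer it is at least $\lceil\lambda m\rceil$; so one may choose $\Lambda=B\cup E$ with $E\subseteq A'\setminus A$ as a legitimate greedy set of order $\lceil\lambda m\rceil$. Now $x-P_\Lambda(x)=1_{A'\setminus E}$ with $A\subseteq A'\setminus E$, so UCC (a consequence of the suppression quasi-greediness you already established) gives $\|1_A\|\leqslant K\|1_{A'\setminus E}\|$, the $\lambda$-PG inequality with $S_n(x)=1_{A'}$ gives $\|1_{A'\setminus E}\|\leqslant C(1+\varepsilon)\|1_B\|$, and letting $\varepsilon\to 0$ finishes. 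With this replacement of the test vector the forward direction is sound, and the overall proposal is correct.
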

In light of Theorem \ref{pm1}, it suffices to construct an  unconditional basis that is $\lambda_2$-max conservative but is not $\lambda_1$-max conservative. 

Since $1\leqslant \lambda_1 < \lambda_2$, it is possible to choose $a_1, a_2, a_3$, and $a_4$ that satisfy
$$a_1\ >\ 1, \quad 1 \ >\ a_3, a_4 \ >\ 0,  \mbox{ and }$$
$$ a_3 + (\lambda_1-1)(a_1 + a_3)\ <\ a_2 \ <\ \lambda_2 - 1 - a_4.$$
Let $p$ be such that
$$p \ >\ \max\left\{\frac{a_2}{a_1-1}, \frac{a_2+1}{a_4}, a_1+a_3\right\}.$$
Define the sequence $(g_n)_{n=1}^\infty$ recursively: let  $g_1 = 1$. Supposing that $g_n$ has been defined for some $n\geqslant 1$, we choose $g_{n+1}$ so that
\begin{equation}\label{e4}g_{n+1} > (p+n)g_n.\end{equation}

We now define the space $X_{\lambda_1,\lambda_2}$: let
$$\mathcal{F} \ =\ \{F\in \mathbb{N}^{<\infty}\,:\, F\neq \emptyset\mbox{ and }\min F = |F| = g_i\mbox{ for some }i\}, \mbox{ and}$$
define $X_{\lambda_1, \lambda_2}$ to be the completion of $c_{00}$ with respect to the norm
$$\|(x_i)_{i=1}^\infty\|\ =\ \max_i |x_i|\vee \sup_{\substack{j\in \mathbb{N}, F\in \mathcal{F}\\ \min F = g_j}}\sum_{\substack{i\in F\\ i > a_1g_j\\ i\notin [g_{j+1}, g_{j+1}+a_2g_j]}}|x_i|.$$
Let $\mathcal{B}_{\lambda_1, \lambda_2} = (e_n)_{n=1}^\infty$ be the standard unit vector basis of $X_{\lambda_1, \lambda_2}$. Clearly, $\mathcal{B}_{\lambda_1, \lambda_2} $ is normalized and unconditional with constant $1$. 

\begin{thm}\label{m1'}
The unit vector basis $\mathcal{B}_{\lambda_1, \lambda_2} $ of the space $X_{\lambda_1, \lambda_2}$ is unconditional and $\lambda_2$-max conservative, thus $\lambda_2$-PG. However, $\mathcal{B}_{\lambda_1, \lambda_2} $ is not $\lambda_1$-max conservative and thus, not $\lambda_1$-PG.
\end{thm}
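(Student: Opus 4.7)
The plan is to invoke Theorem \ref{pm1} and reduce the claim to showing that $\mathcal{B}_{\lambda_1,\lambda_2}$ is $\lambda_2$-max conservative but not $\lambda_1$-max conservative; quasi-greediness is automatic because the basis is unconditional with constant~$1$. The computational backbone is the formula
\[
\|1_A\|\;=\;\max\Bigl(1,\;\sup_{j\geqslant 1}N_j(A)\Bigr),\qquad N_j(A)\;:=\;\min\bigl(g_j-1,\;|A\cap(a_1 g_j,\infty)\setminus[g_{j+1},g_{j+1}+a_2 g_j]|\bigr),
\]
which holds because, for each $j$, the sup over $F\in\mathcal{F}$ with $\min F=g_j$ is realized by putting as many elements of $A\cap(a_1 g_j,\infty)\setminus[g_{j+1},g_{j+1}+a_2 g_j]$ as possible into $F\setminus\{g_j\}$ (at most $g_j-1$ of them) and padding out the rest of $F$ arbitrarily.

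For the failure of $\lambda_1$-max conservative, for each large $j$ I pick $A_j$ to be $\lfloor a_3 g_j\rfloor$ integers inside $(a_1 g_j,(a_1+a_3)g_j]$ and $B_j\subseteq[g_{j+1},g_{j+1}+\lfloor a_2 g_j\rfloor]$ with $|B_j|=\lceil((\lambda_1-1)(a_1+a_3)+a_3)g_j\rceil$. The inequality $a_3+(\lambda_1-1)(a_1+a_3)<a_2$ guarantees both that $B_j$ fits in the excluded interval and that $(\lambda_1-1)\max A_j+|A_j|\leqslant|B_j|$, while $p>a_1+a_3$ gives $A_j<B_j$. The formula yields $N_j(A_j)=\lfloor a_3 g_j\rfloor$, so $\|1_{A_j}\|\geqslant\lfloor a_3 g_j\rfloor$. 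For $B_j$, the level $k=j$ contributes $0$ since $B_j$ is swallowed by the excluded interval; for $k\geqslant j+1$ one has $a_1 g_k\geqslant a_1 g_{j+1}>g_{j+1}+a_2 g_j$ (which uses $p>a_2/(a_1-1)$), so the $(a_1 g_k,\infty)$ condition fails; and for $k\leqslant j-1$ the formula gives $N_k(B_j)=g_k-1$. Hence $\|1_{B_j}\|=g_{j-1}-1$ and
\[
\frac{\|1_{A_j}\|}{\|1_{B_j}\|}\;\geqslant\;\frac{\lfloor a_3 g_j\rfloor}{g_{j-1}-1}\;\geqslant\;a_3(p+j-1)-O(1)\;\longrightarrow\;\infty.
\]

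For $\lambda_2$-max conservative, fix $A<B$ with $(\lambda_2-1)\max A+|A|\leqslant|B|$, set $M=\max A$, and let $j'$ be the largest index with $g_{j'}\leqslant M$; I may assume $j'$ is large, as the finitely many remaining regimes give $\|1_A\|=O(1)$ and are absorbed into the constant. If $M\leqslant a_1 g_{j'}$, then $N_{j'}(A)=0$, so $\|1_A\|\leqslant g_{j'-1}-1$. I evaluate $N_{j'-1}(B)$: the threshold condition $a_1 g_{j'-1}<g_{j'}\leqslant M<\min B$ holds, the exclusion $[g_{j'},g_{j'}+a_2 g_{j'-1}]$ removes at most $a_2 g_{j'-1}+1$ elements of $B$, and $|B|\geqslant(\lambda_2-1)g_{j'}$ dominates $(a_2+1)g_{j'-1}$ by the bounds $p>(a_2+1)/a_4$ and $a_4<\lambda_2-1$, yielding $N_{j'-1}(B)=g_{j'-1}-1\geqslant\|1_A\|$. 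If instead $M>a_1 g_{j'}$, then $\|1_A\|\leqslant g_{j'}-1$; now I evaluate $N_{j'}(B)$, where $a_1 g_{j'}<M<\min B$ is automatic, and the exclusion $[g_{j'+1},g_{j'+1}+a_2 g_{j'}]$ removes at most $a_2 g_{j'}+1$ elements of $B$. Since $|B|\geqslant(\lambda_2-1)a_1 g_{j'}$ and $(\lambda_2-1)a_1-a_2>(\lambda_2-1)-a_2>a_4>0$ (the crucial consequence of $a_2<\lambda_2-1-a_4$ together with $a_1>1$), I obtain $N_{j'}(B)\geqslant\min(g_{j'}-1,\,a_4 g_{j'}-O(1))$, which dominates $\|1_A\|$ up to a factor depending only on $a_4$.

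The main obstacle, in my judgment, is the bookkeeping: every parameter inequality---namely $a_1>1$, $a_3+(\lambda_1-1)(a_1+a_3)<a_2<\lambda_2-1-a_4$, and the three lower bounds packaged into $p$---has to be deployed at precisely the right step, and the three regimes of the index $k$ in the norm formula must each be verified separately for $A_j$ and $B_j$ in the failure direction and for the generic $A,B$ in the success direction. The conceptual content is that the excluded intervals manufacture an asymmetry preventing a set concentrated in $[g_{j+1},g_{j+1}+a_2 g_j]$ from being ``reached'' by any $F$ with $\min F=g_j$, while leaving just enough room for the next-level index $j'-1$ or $j'$ to recover the estimate once the enlargement factor crosses the $\lambda_2$ threshold.
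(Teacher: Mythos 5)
Your proof is correct and follows essentially the same approach as the paper: invoke Theorem \ref{pm1}, define the same weighted space $X_{\lambda_1,\lambda_2}$, use the explicit norm formula for $\|1_A\|$ in terms of counts outside the excluded intervals $[g_{j+1},g_{j+1}+a_2 g_j]$, and estimate level by level. The one cosmetic difference is in the $\lambda_2$-max conservative direction: the paper first splits on the size of $B'=B\cap(\max A,g_{i_0+1})$ and then on $\max A$ versus $a_1 g_{i_0}$, while you split directly on $\max A$ versus $a_1 g_{j'}$ and use all of $B$ at level $j'-1$ or $j'$, which collapses the case tree slightly but rests on the same bounds.
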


We prove Theorem \ref{m1'} through Claims \ref{cl1} and \ref{cl2}.

\begin{claim}\label{cl1}
The basis $\mathcal{B}_{\lambda_1, \lambda_2} $ is $\lambda_2$-max conservative. 
\end{claim}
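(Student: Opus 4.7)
My plan is first to unfold the formula for $\|1_S\|$. For $S \in \mathbb{N}^{<\infty}$ and $j \geqslant 1$, set
$$N_j(S) \,:=\, \bigl|S \cap (a_1 g_j, \infty) \setminus [g_{j+1}, g_{j+1} + a_2 g_j]\bigr|.$$
For fixed $j$, optimizing over $F \in \mathcal{F}$ with $\min F = g_j$ amounts to filling the $g_j - 1$ free slots of $F \setminus \{g_j\}$ with as many elements of $S \cap (a_1 g_j, \infty) \setminus [g_{j+1}, g_{j+1} + a_2 g_j]$ as possible (noting that $g_j$ itself never contributes, since $a_1 > 1$). This yields
$$\|1_S\| \,=\, \max\!\Bigl(\mathbf{1}_{\{S \neq \emptyset\}},\, \sup_{j \geqslant 1} \min\bigl(g_j - 1,\, N_j(S)\bigr)\Bigr),$$
together with the trivial bound $\|1_S\| \leqslant |S|$.

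\textbf{Scale selection for $A$.} Next, fix $A < B$ with $(\lambda_2 - 1)M + n \leqslant |B|$, where $M := \max A$, $n := |A|$, and $A \neq \emptyset$. Let $j^*$ be the largest integer $\geqslant 0$ with $a_1 g_{j^*} \leqslant M$ (set $g_0 := 0$). For $j > j^*$, $A \subseteq [1, M] \subseteq [1, a_1 g_j)$ forces $N_j(A) = 0$, and for $j \leqslant j^*$ we have $\min(g_j - 1, N_j(A)) \leqslant g_{j^*} - 1$, so $\|1_A\| \leqslant \max(1, g_{j^*} - 1)$. The cases $j^* \leqslant 1$ are immediate: then $g_{j^*} \leqslant 1$, so $\|1_A\| \leqslant 1 \leqslant \|1_B\|$. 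From here I would assume $j^* \geqslant 2$, in which case $g_{j^*} \geqslant g_2 > p + 1 > p$.

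\textbf{Lower bound on $\|1_B\|$.} I would then apply the norm formula to $B$ with the test index $j' = j^*$. Since $\min B > M \geqslant a_1 g_{j^*}$, all of $B$ sits in $(a_1 g_{j^*}, \infty)$, and at most $a_2 g_{j^*} + 1$ integers lie in the excluded interval, so
$$N_{j^*}(B) \,\geqslant\, |B| - (a_2 g_{j^*} + 1) \,\geqslant\, (\lambda_2 - 1) a_1 g_{j^*} + n - a_2 g_{j^*} - 1 \,=\, \alpha\, g_{j^*} + n - 1,$$
where $\alpha := (\lambda_2 - 1) a_1 - a_2$. The constraint $a_2 < \lambda_2 - 1 - a_4$ yields $\alpha > (a_2 + a_4) a_1 - a_2 = a_2(a_1 - 1) + a_1 a_4 > a_1 a_4$, while $g_{j^*} > p > (a_2 + 1)/a_4$ gives $g_{j^*}\, a_4 > a_2 + 1 > 1$. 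Multiplying, $g_{j^*}\, \alpha > a_1 g_{j^*}\, a_4 > a_1 > 1$, and therefore $N_{j^*}(B) > n$.

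\textbf{Concluding.} The norm identity gives $\|1_B\| \geqslant \min(g_{j^*} - 1, N_{j^*}(B))$. If $N_{j^*}(B) \geqslant g_{j^*} - 1$, then $\|1_B\| \geqslant g_{j^*} - 1 \geqslant \|1_A\|$; otherwise $\|1_B\| \geqslant N_{j^*}(B) > n \geqslant \|1_A\|$. In either case $\|1_A\| \leqslant \|1_B\|$, establishing $\lambda_2$-max conservativity with constant $1$. The hard part will be the tight budgeting of the parameter slack: the margin $\alpha > a_1 a_4$ (from $a_2 < \lambda_2 - 1 - a_4$) must absorb the loss $a_2 g_{j^*} + 1$ in the bound on $N_{j^*}(B)$, and this works only because $g_{j^*} > (a_2+1)/a_4$ is forced by $p > (a_2+1)/a_4$ together with the super-geometric growth $g_{j+1} > (p + j) g_j$ whenever $j^* \geqslant 2$.
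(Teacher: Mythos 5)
Your proof is correct, and it follows the same general method as the paper's argument (unfold the norm formula $\|1_S\| = \max(\mathbf{1}_{\{S\neq\emptyset\}}, \sup_j \min(g_j-1, N_j(S)))$, bound $\|1_A\|$ from above via a scale index, and bound $\|1_B\|$ from below via a test index). However, the two executions diverge in a way worth noting. The paper chooses $i_0$ with $g_{i_0} \leqslant \max A < g_{i_0+1}$ and then runs a two-way case split on whether $|B \cap (\max A, g_{i_0+1})| \geqslant a_4\max A$, using different test indices ($i_0-1$ versus $i_0$) and even a further sub-split on whether $\max A \leqslant a_1 g_{i_0}$. Your choice of scale index $j^*$ (defined by $a_1 g_{j^*} \leqslant \max A < a_1 g_{j^*+1}$, so $j^* \in \{i_0-1, i_0\}$) already encodes that final sub-split, and you collapse the remaining dichotomy by introducing the trivial bound $\|1_A\| \leqslant |A|$: either $N_{j^*}(B) \geqslant g_{j^*}-1$ (so $\|1_B\| \geqslant g_{j^*}-1 \geqslant \|1_A\|$) or $N_{j^*}(B) < g_{j^*}-1$ (so $\|1_B\| \geqslant N_{j^*}(B) > |A| \geqslant \|1_A\|$). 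This is genuinely tidier, requires only one test index applied to $B$ itself rather than to $B'$ or $B''$, and as a bonus delivers the $\lambda_2$-max conservative property with constant $1$, whereas the paper's bookkeeping yields a constant depending on $a_2, a_4, \lambda_2$. The parameter budgeting ($\alpha = (\lambda_2-1)a_1 - a_2 > a_1 a_4$ together with $g_{j^*} a_4 > a_2+1 > 1$, forcing $\alpha g_{j^*} > 1$ and hence $N_{j^*}(B) > |A|$) is checked correctly and uses exactly the constraints $a_2 < \lambda_2 - 1 - a_4$ and $p > (a_2+1)/a_4$ as you claim.
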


\begin{proof}
Pick $A, B\in \mathbb{N}^{<\infty}$ with $A < B$ and $(\lambda_2-1)\max A + |A| \leqslant |B|$. Choose $i_0$ such that 
$g_{i_0}\leqslant \max A < g_{i_0+1}$. It follows trivially from the definition of the norm that 
\begin{equation}\label{e1}
\|1_A\|\ \leqslant\ \max\{1, g_{i_0}-1\}.
\end{equation}
If $g_{i_0} < 2a_4^{-1}-1$, then 
$$\|1_A\|\ \leqslant\ \max\{1, 2a_4^{-1}-2\}\ \leqslant\ \max\{1, 2a_4^{-1}-2\}\|1_B\|.$$
If $i_0 \leqslant 2$, then \eqref{e1} trivially gives
$$\|1_A\|\ \leqslant\ g_2-1\ \leqslant\ (g_2-1) \|1_B\|.$$

Assume that $g_{i_0}\geqslant 2a_4^{-1}-1$ and $i_0\geqslant 3$. Set $B':= B\cap (\max A, g_{i_0+1})$.

Case 1: $|B'| \geqslant a_4\max A\geqslant a_4g_{i_0}$. By \eqref{e4}, we have
\begin{align*}
|\{n\in B'\,:\, n  > g_{i_0} + a_2 g_{i_0-1}\}|& \ =\ |B'| - |\{n\in B'\,:\, n\leqslant g_{i_0} + a_2 g_{i_0-1}\}|\\
&\ \geqslant\ a_4 g_{i_0} - a_2 g_{i_0-1}\\
&\ >\ (a_2+1)g_{i_0-1}-a_2g_{i_0-1}\ =\ g_{i_0-1}.
\end{align*}
Hence, $\|1_{B'}\|\geqslant g_{i_0-1}-1$. 
\begin{itemize}
\item If $\max A\leqslant a_1g_{i_0}$, then 
$$\|1_A\|\ \leqslant\  g_{i_0-1}-1\ \leqslant\ \|1_{B'}\|.$$ 
\item If $\max A > a_1 g_{i_0}$, then 
$$\|1_A\|\ \leqslant\ g_{i_0}-1\ \leqslant\ \frac{2}{a_4}(a_4 g_{i_0}-1)\ \leqslant\ \frac{2}{a_4}\|1_{B'}\|,$$
because $|B'|\geqslant a_4 g_{i_0}$, $a_4 g_{i_0} < g_{i_0}$, and $\min B > a_1g_{i_0}$. 
\end{itemize}

Case 2: $|B'| < a_4\max A$. Then $B'' := B\cap [g_{i_0+1}, \infty)$ has 
$$|B''| \ >\ (\lambda_2-1- a_4)\max A + |A|.$$
It follows that
\begin{align*}
|\{n\in B\,:\, n > g_{i_0+1} + a_2g_{i_0}\}|&\ =\ |B''| - |\{n\in B''\,:\, n \leqslant g_{i_0+1} + a_2g_{i_0}\}|\\
&\ >\ (\lambda_2-1- a_4)\max A + |A| - a_2g_{i_0}-1\\
&\ \geqslant\ (\lambda_2 - 1 - a_2 - a_4)g_{i_0}.
\end{align*}
Hence, 
$$\|1_B\|\ \geqslant\ (g_{i_0}-1)\wedge \lceil (\lambda_2-1-a_2-a_4)g_{i_0}\rceil\ \geqslant\ \left(1 \wedge (\lambda_2-1-a_2-a_4)\right)\|1_A\|.$$
This completes our proof. 
\end{proof}

\begin{claim}\label{cl2}
The basis $\mathcal{B}_{\lambda_1, \lambda_2} $ is not $\lambda_1$-max conservative.
\end{claim}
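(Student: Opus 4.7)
The plan is to exhibit, for each sufficiently large $j$, a pair $A_j<B_j$ of finite subsets of $\mathbb{N}$ satisfying $(\lambda_1-1)\max A_j+|A_j|\leqslant|B_j|$ but with $\|1_{A_j}\|/\|1_{B_j}\|\to\infty$, which directly contradicts the existence of any constant $C$ witnessing $\lambda_1$-max conservativity.

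First I take $A_j:=\{\lceil a_1 g_j\rceil+1,\ldots,\lceil a_1 g_j\rceil+\lceil a_3 g_j\rceil\}$, a run of $\lceil a_3 g_j\rceil$ consecutive integers just above $a_1 g_j$. Since $p>a_1+a_3$, one has $\max A_j<g_{j+1}$ for all large $j$, so $A_j\subset(a_1 g_j,g_{j+1})$ and in particular avoids the forbidden interval $[g_{j+1},g_{j+1}+a_2 g_j]$. Choosing any $F\in\mathcal{F}$ with $\min F=|F|=g_j$ that contains $A_j$ (possible because $|A_j|<g_j$) yields $\|1_{A_j}\|\geqslant\lceil a_3 g_j\rceil$.

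Next I set $M_j:=\lceil(\lambda_1-1)\max A_j+|A_j|\rceil$ and $B_j:=\{g_{j+1},\ldots,g_{j+1}+M_j-1\}$, a block sitting at the start of the interval $[g_{j+1},g_{j+1}+a_2 g_j]$. The strict inequality $a_3+(\lambda_1-1)(a_1+a_3)<a_2$ is designed precisely so that $M_j\leqslant a_2 g_j$ for all sufficiently large $j$, forcing $B_j\subset[g_{j+1},g_{j+1}+a_2 g_j]$; the order condition $A_j<B_j$ and the size bound $|B_j|\geqslant(\lambda_1-1)\max A_j+|A_j|$ are then immediate. The main work lies in bounding $\|1_{B_j}\|$ from above, which I do by a case analysis on the level $j'$ entering the supremum in the norm: at $j'=j$, all of $B_j$ lies in the forbidden interval, so the contribution is $0$; at $j'=j+1$, the hypothesis $p>a_2/(a_1-1)$ together with $g_{j+1}>(p+j)g_j$ gives $\max B_j<a_1 g_{j+1}$, so the filter $(a_1 g_{j'},\infty)$ kills everything; at $j'>j+1$, $\max B_j<g_{j+2}\leqslant\min F'$; and at $j'<j$, since $g_{j'}\notin B_j$ and the forbidden interval at level $j'$ lies below $\min B_j$, any admissible $F'$ meets $B_j$ in at most $g_{j'}-1\leqslant g_{j-1}-1$ points. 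Together with $\max_i|(1_{B_j})_i|=1$, this yields $\|1_{B_j}\|\leqslant g_{j-1}$.

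Combining the two bounds with \eqref{e4} gives
\[
\frac{\|1_{A_j}\|}{\|1_{B_j}\|}\;\geqslant\;\frac{\lceil a_3 g_j\rceil}{g_{j-1}}\;>\;a_3(p+j-1)\;\xrightarrow[j\to\infty]{}\;\infty,
\]
which establishes the claim. The principal obstacle is the level-by-level bookkeeping for $\|1_{B_j}\|$: the constants $a_1,a_2,a_3$ and $p$ must be interlocked so that the forbidden interval at level $j$ absorbs $B_j$ while $B_j$ is still too short to activate the filter at level $j+1$; note in particular that the tightness of the inequality $a_3+(\lambda_1-1)(a_1+a_3)<a_2$ is exactly what ensures the construction succeeds for $\lambda_1$ (and not, say, for $\lambda_2$).
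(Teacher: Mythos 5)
Your proof is correct and follows the paper's own argument essentially step for step: you place $A_j$ just above $a_1 g_j$ (so that the filter at level $j$ fully counts $A_j$, giving $\|1_{A_j}\|\gtrsim a_3 g_j$), and place $B_j$ at the start of the forbidden window $[g_{j+1},g_{j+1}+a_2 g_j]$ while keeping $\max B_j<a_1 g_{j+1}$, forcing $\|1_{B_j}\|\leqslant g_{j-1}$; the ratio then diverges by \eqref{e4}. The only (cosmetic) differences are your use of ceilings rather than floors, which requires "$j$ sufficiently large" where the paper's floor-based choices work for all $j$, and your somewhat more explicit level-by-level case analysis for $\|1_{B_j}\|$, which the paper leaves implicit.
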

\begin{proof}
Pick $j\in \mathbb{N}$. Choose 
\begin{align*}
A &\ =\ \{\lfloor a_1 g_j\rfloor +1, \lfloor a_1 g_j\rfloor + 2\ldots, \lfloor a_1 g_j\rfloor + \lfloor a_3 g_j\rfloor\}\mbox{ and }\\
B&\ =\ \{g_{j+1}, g_{j+1}+1,  \ldots, g_{j+1}+ \lfloor a_3 g_j\rfloor+ \lfloor (\lambda_1-1)(\lfloor a_1 g_j\rfloor + \lfloor a_3 g_j\rfloor)\rfloor\}.
\end{align*}
Then $B > A$ because
$$\max A\ = \ \lfloor a_1 g_j\rfloor + \lfloor a_3 g_j\rfloor\ \leqslant\ (a_1 + a_3)g_j \ <\ g_{j+1}.$$
Furthermore, 
\begin{align*}
|B| &\ =\ \lfloor a_3 g_j\rfloor+ \lfloor (\lambda_1-1)(\lfloor a_1 g_j\rfloor + \lfloor a_3 g_j\rfloor)\rfloor + 1\\
&\ \geqslant\ |A| + (\lambda_1-1)(\lfloor  a_1 g_j\rfloor + \lfloor a_3 g_j\rfloor)\\
&\ =\ |A| + (\lambda_1-1) \max A.
\end{align*}
We have $\|1_A\|\geqslant \lfloor a_3 g_j\rfloor$ because $|A| = \lfloor a_3 g_j\rfloor$ and $\min A > a_1g_j$. Meanwhile, it follows from
$$\lfloor a_3 g_j\rfloor+ \lfloor (\lambda_1-1)(\lfloor a_1 g_j\rfloor + \lfloor a_3 g_j\rfloor)\rfloor \ \leqslant\ a_3 g_j + (\lambda_1-1)(a_1 g_j + a_3 g_j) \ <\ a_2 g_j$$ and
$$g_{j+1}+\lfloor a_3 g_j\rfloor+ \lfloor (\lambda_1-1)(\lfloor a_1 g_j\rfloor + \lfloor a_3 g_j\rfloor)\rfloor\ <\ g_{j+1}+a_2g_{j}\ <\ a_1g_{j+1}$$
that $\|1_B\|\leqslant g_{j-1}$. Hence, $\|1_A\|/\|1_B\|\rightarrow\infty$ as $j\rightarrow \infty$. Therefore, $\mathcal{B}_{\lambda_1, \lambda_2} $ is not $\lambda_1$-max conservative. 
\end{proof}

%%%%%%%%%%%%%%%%%%%%%%%%%%%%%%%%%%%%%%%%%%%%%%%%%%%%%%%%%%%%%%%%%%%%%%%%%%%%%%%%%%%%%%%%%%%%%%%%%%%%%%%%%%%%%%%%%%%%%%%%%%%%%%%%%%%%%%%%%%%%%%%%%%%%%%%%%%%%%%%%%%%%%%%%%%%%%%%%%%%%%%%%%%%%%%%%%%%%%%%%%%%%%%%%%%%%%%%%%%%%%%%%%%%%%%%%%%%%%%%%%%%%%%%%%%%%%%%%%%%%%%%%%%%%%%%%%%%%%%%%%%%%%%%%%%%%%%%%%%%%%%%%%%%%%%%%%%%%%%%%%%%%%%%%%%%%%%%%%%%%%%%%%%%%%%%%%%%%%%%%%%%
\section{On $\lambda$-almost greedy of type $2$ bases}\label{AG2}

Our first goal in this section is to break the $\lambda$-AG2 property into simpler properties, namely quasi-greedy and $\lambda$-SLC2 (Theorem \ref{m2'}). The characterization then facilitates our proof of Theorem \ref{m10} and Proposition \ref{m2}.

We shall need the following notation. 
A sign sequence $\varepsilon = (\varepsilon_n)_{n=1}^\infty$ is a sequence of scalars of modulus $1$, i.e., $|\varepsilon_n| = 1$. For $A\in \mathbb{N}^{<\infty}$, let
\begin{itemize}
\item $1_{\varepsilon A}:= \sum_{n\in A} \varepsilon _n e_n$,
\item $\|x\|_\infty := \max_{n} |e_n^*(x)|$,
\item $s(A) = \begin{cases}0, &\mbox{ if } A = \emptyset,\\
\max A - \min A + 1, &\mbox{ if } A\neq \emptyset,\end{cases}$ and 
\item $\supp(x) = \{n: e^*_n(x)\neq 0\}$.
\end{itemize}
For $A, B\in \mathbb{N}^{<\infty}$, we say, ``$B$ surrounds $A$", if either $A = \emptyset$ or $B\cap [\min A, \max A] = \emptyset$. We say, ``$x$ surrounds $A$", if $\supp(x)$ surrounds $A$.

\subsection{Characterization of $\lambda$-AG2 bases}
\begin{defi}\label{SLC2defi}\normalfont
For $\lambda\geqslant 1$, a basis is said to be $\lambda$-symmetric for largest coefficients of type $2$ ($\lambda$-SLC2) if there exists $C \geqslant 1$ such that 
\begin{equation}\label{e8}\|x+1_{\varepsilon A}\|\leqslant C\|x+1_{\delta B}\|,\end{equation}
for all $x\in X$ with $\|x\|_\infty\leqslant 1$, for all signs $\varepsilon, \delta$, and for all $A, B\in \mathbb{N}^{<\infty}$ such that
\begin{itemize}
\item[a)] $(\lambda-1)s(A) +|A|\leqslant |B|$,
\item[b)] $B\cap \supp(x) = \emptyset$, and
\item[c)] $B$ and $x$ surround $A$.
\end{itemize}
\end{defi}

\begin{prop}\label{SLC2redefi}
A basis is $\lambda$-SLC2 with constant $C$ if and only if 
\begin{equation}\label{e7}
\|x\| \ \leqslant\ C\|x-P_A(x)+1_{\varepsilon B}\|,
\end{equation}
for all $x\in X$ with $\|x\|_\infty\leqslant 1$, for all signs $\varepsilon$, and for all $A, B\in \mathbb{N}^{<\infty}$ such that
\begin{itemize}
\item[a)] $(\lambda-1)s(A) +|A|\leqslant |B|$,
\item[b)] $B\cap \supp(x-P_A(x)) = \emptyset$, and
\item[c)] $B$ and $x-P_A(x)$ surround $A$.
\end{itemize}
\end{prop}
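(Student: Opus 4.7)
The plan is to prove both implications by directly translating between the two formulations, since their hypotheses differ only in whether one allows $x$ itself (with unrestricted coefficients on $A$) or only $x+1_{\varepsilon A}$ (with signs on $A$) to appear on the left-hand side. The backward direction will be a one-line substitution, while the forward direction requires expressing the projection $P_A(x)$ as a convex combination of sign vectors on $A$.

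For the direction $(\Leftarrow)$, I would take $x$, signs $\varepsilon,\delta$, and sets $A,B$ as in Definition \ref{SLC2defi}, and set $y:=x+1_{\varepsilon A}$. Because $x$ surrounds $A$, the supports of $x$ and $1_{\varepsilon A}$ are disjoint, so $\|y\|_\infty\leqslant 1$, $P_A(y)=1_{\varepsilon A}$, and $y-P_A(y)=x$. The hypotheses (a), (b), (c) for $y$ with the same $A,B$ are immediate, and applying \eqref{e7} produces
$$\|x+1_{\varepsilon A}\|\ =\ \|y\|\ \leqslant\ C\|y-P_A(y)+1_{\delta B}\|\ =\ C\|x+1_{\delta B}\|,$$
which is \eqref{e8}.

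For the direction $(\Rightarrow)$, I would take $x,A,B,\varepsilon$ as in \eqref{e7} and set $y:=x-P_A(x)$. Then $\|y\|_\infty\leqslant\|x\|_\infty\leqslant 1$, and conditions (b), (c) of Definition \ref{SLC2defi} hold for $y$ by assumption. The key step is to write $P_A(x)=\sum_{i}\alpha_i 1_{\varepsilon_i A}$ as a (finite) convex combination of sign vectors: each coefficient $e_n^*(x)$ for $n\in A$ lies in $[-1,1]$ or in the closed unit disk of $\mathbb{C}$, which is the convex hull of $\{\pm 1\}$ or the unit circle respectively, and a standard product-of-convex-hulls argument shows the closed unit polydisk on $A$ is the convex hull of the polytorus; Carathéodory then guarantees a finite representation since $A$ is finite. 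Applying \eqref{e8} to $y$ with the sign $\varepsilon_i$ on $A$ and the common sign $\varepsilon$ on $B$ gives $\|y+1_{\varepsilon_i A}\|\leqslant C\|y+1_{\varepsilon B}\|$ for each $i$, and the triangle inequality yields
$$\|x\|\ =\ \Bigl\|\sum_i\alpha_i(y+1_{\varepsilon_i A})\Bigr\|\ \leqslant\ \sum_i\alpha_i\|y+1_{\varepsilon_i A}\|\ \leqslant\ C\|y+1_{\varepsilon B}\|\ =\ C\|x-P_A(x)+1_{\varepsilon B}\|.$$

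The only non-routine point is the convex-combination step in the complex case, where one must verify that the closed unit polydisk in $\mathbb{C}^A$ equals the convex hull of the unit polytorus; this is the main (minor) obstacle, but it follows from writing each coordinate as a convex combination on its own disk and multiplying out the coefficients, which still sum to $1$. Everything else is bookkeeping on the supports and intervals, which transfers directly between $x$ and $x-P_A(x)$ because the conditions (b) and (c) in both formulations are stated in terms of whichever vector ends up playing the role of ``ambient support" outside of $A$.
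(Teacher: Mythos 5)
Your proposal is correct and takes essentially the same approach as the paper: the backward direction is the same substitution $y=x+1_{\varepsilon A}$, and the forward direction is the same convexity argument (the paper compresses it to the chain $\|x\|\leqslant\sup_{\delta}\|x-P_A(x)+1_{\delta A}\|\leqslant C\|x-P_A(x)+1_{\varepsilon B}\|$, invoking ``convexity'' where you spell out the finite convex decomposition of $P_A(x)$ into sign vectors). The Carathéodory/polydisk detail you flag is exactly what the paper's single word ``convexity'' is covering, and your verification that the hypotheses (a)--(c) transfer between $x$ and $x-P_A(x)$ is accurate.
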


\begin{proof}
Assume that our basis is $\lambda$-SLC2 with constant $C$. Let $x, A, B, \varepsilon$ be chosen as in Proposition \ref{SLC2redefi}. By convexity and \eqref{e8},
$$
\|x\|\ \leqslant\ \sup_{\delta}\|x - P_A(x) +1_{\delta A}\|\ \leqslant\ C\|x-P_A(x) + 1_{\varepsilon B}\|
$$
Conversely, assume that our basis satisfies \eqref{e7}. Let $x, A, B, \varepsilon, \delta$ be chosen as in Definition \ref{SLC2defi}. Let $y = x+1_{\varepsilon A}$. By \eqref{e7}, 
$$
\|x+1_{\varepsilon A}\|\ =\ \|y\|\ \leqslant\ C\|y-P_A(y)+1_{\delta B}\|\ =\ C\|x+1_{\delta B}\|.
$$
We have completed the proof. 
\end{proof}

\begin{lem}\label{l1}
If a basis is $\lambda$-AG2, then it is quasi-greedy and $\lambda$-SLC2.
\end{lem}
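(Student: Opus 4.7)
The plan is to establish the two conclusions separately from the $\lambda$-AG2 hypothesis by specializing the test vector and interval in \eqref{e5}.

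For suppression quasi-greediness, setting $I = \emptyset$ in \eqref{e5} gives $\|x - G_{\lceil\lambda m\rceil}(x)\| \leqslant C\|x\|$ for every $m$, i.e., the estimate along orders of the form $\lceil\lambda m\rceil$. To upgrade to every order $n$, I would pad: for a greedy set $A$ of $x$ of order $n$, choose the least $m$ with $n_0 := \lceil\lambda m\rceil \geqslant n$ (so $d := n_0 - n \leqslant \lceil\lambda\rceil$), take a fresh interval $D$ of $d$ positions beyond $\max\supp(x)$, and form $y := x + c\cdot 1_{\gamma D}$ for any $c > \|x\|_\infty$. Then $A\cup D$ is a greedy set of $y$ of order $n_0$, and \eqref{e5} applied with $I := D$ gives $\|x - P_A(x)\| \leqslant C\|x\|$. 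The constraint $|I|\leqslant m$ holds once $n$ exceeds a bound of order $\lambda^2$; the finitely many small values of $n$ are handled by the crude bound $\|P_A(x)\| \leqslant nM^2\|x\|$ from semi-normalization, where $M$ controls the basis and dual-basis norms.

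For $\lambda$-SLC2 I plan to verify $\|x + 1_{\varepsilon A}\| \leqslant C'\|x + 1_{\delta B}\|$ directly. First I would reduce to the minimal case $|B| = \lceil(\lambda-1)s(A)\rceil + |A|$: if $|B|$ is larger, shrink it to a subset $B' \subseteq B$ of that size; since all coefficients of $x + 1_{\delta B}$ on $B$ have modulus $1$, the set $B\setminus B'$ is a valid greedy set of that vector, and the suppression quasi-greediness just proved yields $\|x + 1_{\delta' B'}\| \leqslant C_q\|x + 1_{\delta B}\|$, so it suffices to treat the minimal case. There, the arithmetic coincidence
$$
\lceil\lambda s(A)\rceil - |B| \;=\; \lceil\lambda s(A)\rceil - \lceil(\lambda-1)s(A)\rceil - |A| \;=\; s(A) - |A|
$$
(using that $s(A)$ is an integer, so $\lceil\lambda s(A)\rceil = \lceil(\lambda-1)s(A)\rceil + s(A)$) equals exactly the number of vacant positions in $[\min A,\max A]\setminus A$. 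Let $D$ be the full set of these vacancies; by the surround hypotheses $D$ is automatically disjoint from $\supp(x)\cup B$, and $A\cup D = [\min A,\max A]$. For any signs $\gamma$ and any $c > 1$, set
$$
y \;:=\; x + 1_{\varepsilon A} + c\cdot 1_{\delta B} + c\cdot 1_{\gamma D}.
$$
Then $B\cup D$ is a greedy set of $y$ of order $\lceil\lambda s(A)\rceil$, and the interval $I := [\min A,\max A]$ has $|I| = s(A) =: m$ with $P_I(y) = 1_{\varepsilon A} + c\cdot 1_{\gamma D}$. Plugging into \eqref{e5} and letting $c\searrow 1$ yields $\|x + 1_{\varepsilon A}\| \leqslant C\|x + 1_{\delta B}\|$.

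The main obstacle is matching $|B\cup D|$ to the order $\lceil\lambda m\rceil$ of the greedy sum while keeping the witnessing interval $I$ within the allowed cardinality $m$. This is exactly where the identity $\lceil\lambda s(A)\rceil - \lceil(\lambda-1)s(A)\rceil = s(A)$ becomes essential: it forces $D$ to fit precisely into the gaps of $A$ inside $[\min A,\max A]$, neither too few nor too many. The quasi-greedy reduction of $|B|$ is what makes this bookkeeping uniform, freeing me from tracking the case $|B| > \lceil\lambda s(A)\rceil$ where the extra elements of $B$ cannot be removed by a single interval $P_I$.
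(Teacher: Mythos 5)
Your proof is correct and follows essentially the same route as the paper: pad the vector with $D = [\min A,\max A]\setminus A$ so that $B\cup D$ becomes a greedy set whose removal recovers $x+1_{\varepsilon A}$, then invoke the $\lambda$-AG2 inequality with $I=[\min A,\max A]$. The only differences are cosmetic: the paper cites \cite[Proposition 4.1]{O} for the quasi-greedy upgrade instead of re-deriving it by padding, selects a subset $\Lambda\subset B\cup D$ of exact size $\lceil\lambda s(A)\rceil$ rather than first shrinking $B$ to $B'$, and uses coefficient $1$ on $D$ (valid since greedy sets need not be unique), making your $c\searrow 1$ limit unnecessary.
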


\begin{proof}
Assume that our basis is $\lambda$-AG2 with constant $C_1$. Then by setting $I = \emptyset$ in \eqref{e5}, we obtain 
$$\|x-G_{\lceil\lambda m\rceil}(x)\|\ \leqslant\ C_1\|x\|, \forall x\in X, m \in\mathbb{N}, \forall G_{\lceil\lambda m\rceil}(x).$$
By \cite[Proposition 4.1]{O}, we know that the basis is quasi-greedy, i.e., there exists $C_2$ such that 
\begin{equation}\label{e6}\|x-G_m(x)\|\ \leqslant\ C_2\|x\|,\forall x\in X, \forall m\in \mathbb{N}, \forall G_m(x).\end{equation}

Let $x, A, B, \varepsilon, \delta$ be chosen as in Definition \ref{SLC2defi}. We shall show that 
$$\|x+1_{\varepsilon A}\|\ \leqslant\ C_1C_2\|x+1_{\delta B}\|.$$

If $A = \emptyset$, then \eqref{e6} gives
$$\|x+1_{\varepsilon A}\|\ =\ \|x\|\ \leqslant\ C_2\|x+1_{\delta B}\|.$$

If $A\neq \emptyset$, then we set $y:= x + 1_{\varepsilon A}+1_{\delta B} + 1_D$, where 
$D = [\min A, \max A]\backslash A$. Observe that $B\cup D$ is a greedy set of $y$, and 
$$|B\cup D| \ =\ |B| + |D|\ \geqslant\ (\lambda-1)s(A) + |A| + (s(A)-|A|)\ =\ \lambda s(A).$$
Choose $\Lambda\subset B\cup D$ such that $|\Lambda| = \lceil \lambda s(A)\rceil$. Since the basis is $\lambda$-AG2 with constant $C_1$, we have
\begin{align*}
\|x+1_{\varepsilon A}\|&\ =\ \|y-P_{B\cup D}(y)\|\\
&\ \leqslant\ C_2\|y - P_{\Lambda}(y)\|\\
&\ \leqslant\ C_1C_2\|y- P_{A\cup D}(y)\|\ =\ C_1C_2\|x+1_{\delta B}\|, 
\end{align*}
as desired. 
\end{proof}

\begin{lem}\label{l2}
If a basis is suppression quasi-greedy with constant $C_1$ and $\lambda$-SLC2 with constant $C_2$, then it is $\lambda$-AG2 with constant $C_1C_2$. 
\end{lem}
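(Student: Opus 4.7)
The plan is to prove $\|x-P_\Lambda(x)\|\leq C_1 C_2\|x-P_I(x)\|$ for every $x$, every greedy set $\Lambda$ with $|\Lambda|=\lceil\lambda m\rceil$, and every interval $I$ with $|I|\leq m$. The case $I=\emptyset$ follows at once from suppression quasi-greediness, so assume $I\neq\emptyset$. Set $A=\Lambda\setminus I$, $B=I\setminus\Lambda$, $D=\Lambda\cap I$, $w=x-P_{\Lambda\cup I}(x)$, and $\alpha=\min_{n\in\Lambda}|e_n^*(x)|$, which may be assumed positive (otherwise $P_\Lambda(x)=x$ and the inequality is trivial). The cardinality requirement $(\lambda-1)s(B)+|B|\leq|A|$ will come from $|A|-|B|=\lceil\lambda m\rceil-|I|\geq(\lambda-1)m$ together with $s(B)\leq s(I)=|I|\leq m$, while the surrounding conditions follow from $B\subseteq I$, $A\subseteq\Lambda$, and $\supp w\subseteq(\Lambda\cup I)^c$, which make $A$ and $\supp w$ both disjoint from $I\supseteq[\min B,\max B]$.

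The first main step is to apply Proposition \ref{SLC2redefi} to the normalized vector $\tilde y=\alpha^{-1}(x-P_\Lambda(x))$, which satisfies $\|\tilde y\|_\infty\leq 1$ since every coefficient of $x$ off $\Lambda$ has modulus at most $\alpha$. Take the role of the set ``$A$'' in that proposition to be our $B$, the role of ``$B$'' to be our $A$, and choose signs $\varepsilon_n=\sgn(e_n^*(x))$. A direct calculation gives $\tilde y-P_B(\tilde y)=\alpha^{-1}w$ (using $B\cap\Lambda=\emptyset$ and $\Lambda\cup B=\Lambda\cup I$), so the proposition yields, after multiplying through by $\alpha$,
$$\|x-P_\Lambda(x)\|\ \leqslant\ C_2\,\|w+\alpha\cdot 1_{\varepsilon A}\|.$$

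The second main step is to observe that $w+\alpha\cdot 1_{\varepsilon A}$ is exactly the level-$\alpha$ truncation of the vector $x-P_I(x)=w+P_A(x)$: the coefficients of $P_A(x)$ have moduli $\geqslant\alpha$ and are replaced by $\alpha\cdot\sgn$, while $w$ satisfies $\|w\|_\infty\leqslant\alpha$ and is preserved. Invoking the standard truncation inequality for suppression quasi-greedy bases (a well-known consequence of suppression quasi-greediness with the same constant $C_1$), we obtain
$$\|w+\alpha\cdot 1_{\varepsilon A}\|\ \leqslant\ C_1\,\|w+P_A(x)\|\ =\ C_1\,\|x-P_I(x)\|.$$
Chaining the two estimates delivers the $\lambda$-AG2 inequality with constant $C_1 C_2$.

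The main obstacle is making sure all three hypotheses for Proposition \ref{SLC2redefi} are cleanly verified with the swapped roles of $A$ and $B$; in particular, the cardinality check must use that $I$ is an interval (so $s(B)\leq|I|$), which is where the interval structure of $\lambda$-AG2 enters decisively. The recourse to the truncation inequality is the only non-algebraic ingredient, and it is needed precisely because $\lambda$-SLC2 only deals with indicator-type perturbations $1_{\varepsilon A}$, whereas the right-hand side of $\lambda$-AG2 involves the full projection $P_A(x)$ with unrestricted coefficient magnitudes.
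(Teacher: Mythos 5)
Your proof is correct and takes essentially the same route as the paper: apply Proposition \ref{SLC2redefi} to $x-P_\Lambda(x)$ with the sets $I\setminus\Lambda$ and $\Lambda\setminus I$ in the roles of ``$A$'' and ``$B$'', then invoke the truncation-operator bound (Lemma \ref{BBGlem}) to convert $w+\alpha 1_{\varepsilon A}$ into $x-P_I(x)$. The only cosmetic difference is that you normalize by $\alpha$ first to enforce $\|\cdot\|_\infty\leqslant 1$, whereas the paper applies the scaled form of \eqref{e7} implicitly; the cardinality and surrounding checks are identical in substance.
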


Before proving Lemma \ref{l2}, we recall the uniform boundedness of the truncation operator: for each $a > 0$, we define the truncation function $T_a$ as follows: for $b\in \mathbb{F}$, 
$$T_a(b) \ =\ \begin{cases}\sgn(b) a, &\mbox{ if }|b| > a,\\ b, &\mbox{ if }|b| \leqslant a\end{cases},$$
where $\sgn(b) = b/|b|$ for $b\neq 0$. 
With an abuse of notation, we define the truncation operator $T_a: X\rightarrow X$ as
$$T_a(x)\ =\ \sum_{n=1}^\infty T_a(e_n^*(x))e_n\ =\ a 1_{\varepsilon \Lambda_a(x)} + P_{\mathbb{N}\backslash  \Lambda_a(x)}(x),$$
where $\Lambda_a(x) = \{n: |e_n^*(x)| > a\}$ and $\varepsilon = (\sgn(e_n^*(x)))_n$.

\begin{lem}\cite[Lemma 2.5]{BBG}\label{BBGlem} If a basis is suppression quasi-greedy with constant $C$, 
then $\|T_a\|\leqslant C$ for every $a > 0$.
\end{lem}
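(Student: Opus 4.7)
The plan is to exhibit $T_a(x)$ (or a sequence approaching it) as $y - G_k(y)$ for some vector $y$ with $\|y\|\leqslant\|x\|$, so that the suppression quasi-greedy hypothesis, applied once, yields $\|T_a(x)\|\leqslant C\|y\|\leqslant C\|x\|$. By the density of $c_{00}$ I would first reduce to $x$ with finite support; writing $x = \sum_n a_n e_n$ with $a_n = e_n^*(x)$, I set $\Lambda := \Lambda_a(x)$, $\varepsilon_n := \sgn(a_n)$ for $n\in\Lambda$, and $k := |\Lambda|$. The case $k=0$ is trivial, so assume $k\geqslant 1$. The decomposition
$$T_a(x)\ =\ P_{\Lambda^c}(x) + a\cdot 1_{\varepsilon \Lambda}$$
frames the difficulty: the naive choice $y = x$ has $\Lambda$ as its greedy set of order $k$ and produces $y - G_k(y) = P_{\Lambda^c}(x)$, so SQG yields $\|P_{\Lambda^c}(x)\|\leqslant C\|x\|$ but misses the on-$\Lambda$ contribution $a\cdot 1_{\varepsilon\Lambda}$ entirely.

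To recover the on-$\Lambda$ part, I would introduce fresh indices $\Gamma$ disjoint from $\supp(x)$ with $|\Gamma|=k$, pick signs $\varepsilon'$ on $\Gamma$ and a parameter $b>a$, and set $y_b := T_a(x) + b\cdot 1_{\varepsilon'\Gamma}$; then $\Gamma$ is the unique greedy set of $y_b$ of order $k$, so $y_b - G_k(y_b) = T_a(x)$, and SQG delivers $\|T_a(x)\|\leqslant C\|y_b\|$. The task reduces to bounding $\|y_b\|$ by $\|x\|$, at least in the limit $b\downarrow a$ with $\Gamma$ and $\varepsilon'$ tailored to $x$. An alternative route that avoids fresh indices is to parametrize $z_s := P_{\Lambda^c}(x) + \sum_{n\in\Lambda}\varepsilon_n(s|a_n| + (1-s)a)e_n$ for $s\in[0,1]$, which interpolates linearly between $z_0 = T_a(x)$ and $z_1 = x$ while keeping $\Lambda$ a greedy set of order $k$ throughout; one could then try to exploit convexity together with a secondary SQG application to the reflection $2T_a(x) - x$ (whose coefficients on $\Lambda$ have magnitudes $|2a-|a_n||<a$ and which coincides with $x$ on $\Lambda^c$) to extract the desired bound.

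The main obstacle is converting the chosen SQG application into a bound by $\|x\|$ \emph{with exactly the constant $C$}, rather than with $C$ times some additional factor. In particular, a stepwise argument that lowers the coefficients on $\Lambda$ from $|a_n|$ down to $a$ one coordinate at a time would incur a fatal factor of $C^{k}$, so the proof must apply SQG only a bounded number of times---ideally once---on a vector already calibrated to produce $T_a(x)$ on the left-hand side. This calibration, whether realized through a fresh-index construction paired with a limit or through a convexity argument involving $x$, $T_a(x)$, and their reflection $2T_a(x)-x$, is the technical heart of the lemma and is exactly where I expect the deepest use of the sign-coherence built into the definition of $T_a$.
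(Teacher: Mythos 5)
You have correctly identified the obstacle --- iterating SQG coordinate-by-coordinate on $\Lambda$ would cost a factor $C^{|\Lambda|}$, so the argument must land on a single (or uniformly bounded) application of the suppression bound --- but neither of the two concrete routes you sketch closes the gap, and one of them rests on a false claim. In the fresh-index construction, $y_b := T_a(x) + b\cdot 1_{\varepsilon'\Gamma}$ does satisfy $y_b - G_k(y_b) = T_a(x)$, but you then need $\|y_b\|\lesssim\|x\|$, and there is no mechanism for that: the added mass on $\Gamma$ lives on coordinates disjoint from $\supp(x)$ and in a general Banach space can make $\|y_b\|$ arbitrarily large compared to $\|x\|$, with no useful limit as $b\downarrow a$. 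In the reflection route, the asserted bound $|2a-|a_n||<a$ on the $\Lambda$-coordinates of $2T_a(x)-x$ is simply false whenever $|a_n|\geqslant 3a$; and even where it holds, the midpoint identity $T_a(x)=\tfrac12\bigl(x+(2T_a(x)-x)\bigr)$ would still require an a priori bound on $\|2T_a(x)-x\|$ in terms of $\|x\|$, which is exactly what you are trying to prove.

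The missing ingredient is an Abel-summation (telescoping) identity exhibiting $T_a(x)$ as an \emph{exact convex combination} of the vectors $x-G_j(x)$ along a nested chain of greedy sets inside $\Lambda$. Take $x\in c_{00}$, list $\Lambda_a(x)=\{n_1,\dots,n_k\}$ so that $b_1\geqslant\cdots\geqslant b_k>a$ where $b_j:=|e^*_{n_j}(x)|$, and put $y_j:=x-P_{\{n_1,\dots,n_j\}}(x)$. Each $\{n_1,\dots,n_j\}$ is a greedy set of $x$ of order $j$, so SQG gives $\|y_j\|\leqslant C\|x\|$ for $j=0,\dots,k$. Since $\varepsilon_{n_j}e_{n_j}=(y_{j-1}-y_j)/b_j$ and $T_a(x)=y_k+a\cdot 1_{\varepsilon\Lambda}$, Abel summation yields
\[
T_a(x)\ =\ \frac{a}{b_1}\,y_0\ +\ \sum_{j=1}^{k-1}\Bigl(\frac{a}{b_{j+1}}-\frac{a}{b_j}\Bigr)y_j\ +\ \Bigl(1-\frac{a}{b_k}\Bigr)y_k,
\]
where all coefficients are nonnegative (as $(b_j)$ is nonincreasing and $b_k>a$) and telescope to $1$. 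Hence $\|T_a(x)\|\leqslant C\|x\|$ by convexity of the norm, and density of $c_{00}$ finishes the proof. Your interpolant $z_s$ was pointing in this direction, but the decisive step is the discrete telescoping along the greedy chain, not a continuous homotopy or a reflection.
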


\begin{proof}[Proof of Lemma \ref{l2}]
Assume that our basis is suppression quasi-greedy with constant $C_1$ and $\lambda$-SLC2 with constant $C_2$. Let $x\in X$, $m\in \mathbb{N}$, and $\Lambda\in G(x, \lceil \lambda m\rceil)$. Choose $I\in \mathcal{I}$ and $|I|\leqslant m$. 
If $I = \emptyset$, then $\|x-P_{\Lambda}(x)\|\leqslant C_1\|x\| = C_1\|x-P_I(x)\|$. Assume that $I\neq \emptyset$. 
We shall show that
$$\|x-P_{\Lambda}(x)\|\ \leqslant\ C_1C_2\|x-P_I(x)\|.$$
Let $a = \min_{n\in \Lambda}|e_n^*(x)|$. Then $\|x-P_\Lambda(x)\|_\infty\leqslant a$. By the standard perturbation argument, we can assume that $a > 0$. 

We now verify that $x-P_\Lambda(x)$, $\Lambda\backslash I$, and $I\backslash \Lambda$ satisfy the conditions in Proposition \ref{SLC2redefi}. First, we have
\begin{align*}
|\Lambda\backslash I|\ =\ |\Lambda|-|\Lambda\cap I|&\ \geqslant\ \lambda m + (|I|-|\Lambda\cap I|)-|I|\\
&\ \geqslant\ \lambda m + |I\backslash \Lambda| - m\ \geqslant\ (\lambda-1)s(I\backslash \Lambda) + |I\backslash \Lambda|.
\end{align*}
Secondly, $(\Lambda\backslash I)\cap \supp(x-P_{\Lambda}(x)-P_{I\backslash \Lambda}(x)) = \emptyset$.
Lastly, $\Lambda\backslash I$ surrounds $I\backslash \Lambda$ because $I\in \mathcal{I}$. Obviously, $x-P_{\Lambda}(x) - P_{I\backslash \Lambda}(x) = x - P_{\Lambda\cup I}(x)$ surrounds $I\backslash \Lambda$. Let $\varepsilon = (\varepsilon_n)_n$, where $\varepsilon_n = \begin{cases}0, &\mbox{ if }e_n^*(x) = 0,\\ \sgn(e_n^*(x)), &\mbox{ if }e_n^*(x) \neq 0\end{cases}$. By \eqref{e7} and Lemma \ref{BBGlem}, we obtain
\begin{align*}
\|x-P_{\Lambda}(x)\|&\ \leqslant\ C_2\|x-P_{\Lambda}(x)-P_{I\backslash \Lambda}(x) + a1_{\varepsilon \Lambda\backslash I}\|\\
&\ =\ C_2\|T_{a}(x-P_{\Lambda}(x)-P_{I\backslash \Lambda}(x) + P_{\Lambda\backslash I}(x))\|\\
&\ \leqslant\ C_1C_2\|x-P_{I}(x)\|.
\end{align*}
This completes our proof. 
\end{proof}

We can simplify our characterization by introducing $\lambda$-democratic of type $2$ bases. 
\begin{defi}\label{lambdademo}\normalfont
A basis is said to be $\lambda$-democratic of type $2$ if there is a constant $C > 0$ such that
$\|1_A\|\leqslant C\|1_B\|$ whenever $A, B\in \mathbb{N}^\infty$ with $(\lambda-1)s(A) + |A| \leqslant |B|$ and $B$ surrounding $A$. 
\end{defi}

\begin{prop}\label{p1}
A quasi-greedy and $\lambda$-democratic of type $2$ basis is $\lambda$-SLC2. 
\end{prop}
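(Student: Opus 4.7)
The plan is to verify the $\lambda$-SLC2 inequality $\|x+1_{\varepsilon A}\|\leq C'\|x+1_{\delta B}\|$ by bounding $\|x+1_{\varepsilon A}\|$ via the triangle inequality and handling the two resulting summands separately. The two surrounding preconditions together with $B\cap\supp(x)=\emptyset$ imply that $A$, $B$ and $\supp(x)$ are pairwise disjoint (when nonempty); this disjointness is precisely what lets me treat the three blocks independently in all the bounds below.

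First I would split $\|x+1_{\varepsilon A}\|\leq \|x\|+\|1_{\varepsilon A}\|$. For the first summand, the coefficients of $x+1_{\delta B}$ have modulus $1$ on $B$ and modulus $\leq\|x\|_\infty\leq 1$ off $B$, so $B$ is a greedy set of $x+1_{\delta B}$ of order $|B|$. A quasi-greedy basis is automatically suppression quasi-greedy with some constant $C_{\mathrm{sq}}$, giving
\[ \|x\|\;=\;\|(x+1_{\delta B})-P_B(x+1_{\delta B})\|\;\leq\;C_{\mathrm{sq}}\,\|x+1_{\delta B}\|. \]

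For the second summand I would chain three estimates. A classical consequence of quasi-greediness is the unconditionality for constant coefficients, so there is a constant $C_{\mathrm{u}}$ with $\|1_{\varepsilon A}\|\leq C_{\mathrm{u}}\|1_A\|$ and $\|1_B\|\leq C_{\mathrm{u}}\|1_{\delta B}\|$ for arbitrary signs. Because $(\lambda-1)s(A)+|A|\leq |B|$ and $B$ surrounds $A$, $\lambda$-democracy of type $2$ yields $\|1_A\|\leq C_d\|1_B\|$. Finally, the greedy-set observation of the previous paragraph combined with quasi-greediness (constant $C_q$) gives $\|1_{\delta B}\|=\|P_B(x+1_{\delta B})\|\leq C_q\|x+1_{\delta B}\|$. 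Multiplying these yields $\|1_{\varepsilon A}\|\leq C_{\mathrm{u}}^{2}\,C_d\,C_q\,\|x+1_{\delta B}\|$, and adding the two bounds gives the desired inequality with $C'=C_{\mathrm{sq}}+C_{\mathrm{u}}^{2}C_d C_q$.

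The degenerate cases need only a one-line check: if $A=\emptyset$ the claim is immediate from suppression quasi-greediness, and if $B=\emptyset$ the size constraint forces $A=\emptyset$ as well. I do not expect any substantive obstacle; the one input quoted rather than proved from scratch is the quasi-greedy $\Rightarrow$ UCC implication, which is standard in the greedy-basis literature and is already tacitly used in the paper (for instance through the reference to \cite[Proposition 4.1]{O} in the proof of Lemma \ref{l1}).
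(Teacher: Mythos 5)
Your proof is correct and follows essentially the same route as the paper's. You split $\|x+1_{\varepsilon A}\|$ by the triangle inequality, bound $\|x\|$ by suppression quasi-greediness after observing that $B$ is a greedy set of $x+1_{\delta B}$, and bound $\|1_{\varepsilon A}\|$ by the chain $\|1_{\varepsilon A}\|\lesssim\|1_A\|\lesssim\|1_B\|\lesssim\|1_{\delta B}\|\lesssim\|x+1_{\delta B}\|$ using unconditionality for constant coefficients, $\lambda$-democracy of type~$2$, and quasi-greediness; this is exactly the paper's chain with the UL-property \eqref{e9} supplying the explicit constants.
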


To prove Proposition \ref{p1}, we need the UL-property (\cite{DKKT}) of quasi-greedy bases: if a basis is quasi-greedy with constant $C$, then 
\begin{equation}\label{e9}\frac{1}{2C}\min |a_n|\|1_A\|\ \leqslant\ \left\|\sum_{n\in A}a_n e_n\right\|\ \leqslant\ 2C\max|a_n|\|1_A\|,\end{equation}
for all $A\in \mathbb{N}^{<\infty}$ and scalars $(a_n)_{n\in A}$.

\begin{proof}
Let $x, A, B, \varepsilon, \delta$ be chosen as in Definition \ref{SLC2defi}. Suppose our basis is quasi-greedy with constant $C_1$, suppression quasi-greedy with constant $C_2$, and $\lambda$-democratic of type $2$ with constant $C_3$. Using \eqref{e9} and $\lambda$-democracy of type $2$, we have
$$\|1_{\varepsilon A}\|\ \leqslant\ 2C_1\|1_A\|\ \leqslant\ 2C_1C_3\|1_B\|\ \leqslant\ 4C_1^2C_3\|1_{\delta B}\|\ \leqslant\ 4C_1^3C_3\|x+1_{\delta B}\|,\mbox{ and }$$
$$\|x\|\ \leqslant\ C_2\|x+1_{\delta B}\|.$$
Hence,
$$\|x+1_{\varepsilon A}\|\ \leqslant\ (4C_1^3C_3+C_2)\|x+1_{\delta B}\|,$$
which proves that our basis is $\lambda$-SLC2.
\end{proof}

\begin{proof}[Proof of Theorem \ref{m2'}]
The equivalence between i) and ii) is due to Lemmas \ref{l1} and \ref{l2}. That iii) implies ii) follows from Proposition \ref{p1}, while to see the implication from ii) to iii), we simply set $x = 0$ in the definition of $\lambda$-SLC2. 
\end{proof}

\subsection{A $\lambda$-AG2 basis that is not AG - Proof of Theorem \ref{m10}}
We borrow the basis from \cite[Section 4]{C3}, which is the unit vector basis of a space whose norm involves permutation of two different weights assigned to different subsequences of $\mathbb{N}$. In particular, let $D = \{2^n: n\in\mathbb{N}\}$, $u_n = 1/\sqrt{n}$, and $v_n = 1/n$ for all $n\geqslant 1$. Let $X_w$ be the completion of $c_{00}$ with respect to the following norm:
$$\|x:=(x_n)_{n=1}^\infty\|\ =\ \sup_{\pi,\pi'}\left(\sum_{n\in D}u_{\pi(n)}|x_n| + \sum_{n\notin D}v_{\pi'(n)}|x_n|\right),$$
where the sup is taken over all bijections $\pi: D\rightarrow \mathbb{N}$ and $\pi':\mathbb{N}\backslash D\rightarrow\mathbb{N}$. Let $\mathcal{B}_w$ be the unit vector basis of the space $X_w$. Clearly, $\mathcal{B}_w$ is unconditional with constant $1$ and normalized. Thanks to Theorems \ref{thmDKKT} and \ref{m2'}, it suffices to verify that given any $\lambda > 1$, $\mathcal{B}_w$ is $\lambda$-democratic of type $2$ but not democratic, both of which can be confirmed using the same proofs as in \cite[Section 4]{C3}. Indeed, the proof of \cite[Claim 4.2]{C3} does not utilize the fact that $B<A$, so the same proof can be used to show that $\mathcal{B}_w$ is $\lambda$-democratic of type $2$. To see that $\mathcal{B}_w$ is not democratic, we simply, for $N\in\mathbb{N}$, let $A_N = \{2, 2^2, \ldots, 2^N\}$ and $B_N = \{3^{N+1}, 3^{N+2}, \ldots, 3^{2N}\}$. By the definition of $\|\cdot\|$, we know that $\|1_{A_N}\| = \sum_{n=1}^{N}1/\sqrt{n} \sim \sqrt{N}$ and $\|1_{B_N}\| = \sum_{n=1}^N 1/n \sim \ln N$. Hence, $\lim_{N\rightarrow\infty}\|1_{A_N}\|/\|1_{B_N}\| = \infty$, and so $\mathcal{B}_w$ is not democratic.

\subsection{The equivalence of $\lambda_1$-AG2 and $\lambda_2$-AG2}\label{equiv}
\begin{proof}[Proof of Proposition \ref{m2}]
By Theorem \ref{m2'}, it suffices to show that a $\lambda_2$-democratic of type $2$ and quasi-greedy basis is $\lambda_1$-democratic of type $2$. Assume that our basis is $\lambda_2$-democratic of type $2$ with constant $C_1$ and quasi-greedy with constant $C_2$. Take two nonempty sets $A, B\in \mathbb{N}^{<\infty}$ with $(\lambda_1-1)s(A) + |A|\leqslant |B|$ and $B$ surrounding $A$. Partition the interval $[\min A, \max A]$ into consecutive intervals
\begin{align*}
I_1 &\ = \ \left[\min A, \min A + \left\lceil\frac{\lambda_1-1}{\lambda_2-1}s(A)\right\rceil - 1\right],\\
I_2 &\ =\ \left[\min A +  \left\lceil\frac{\lambda_1-1}{\lambda_2-1}s(A)\right\rceil, \min A + 2\left\lceil\frac{\lambda_1-1}{\lambda_2-1}s(A)\right\rceil - 1\right],\\
\vdots\\
I_{k_0} &\ =\ \left[\min A +  (k_0-1)\left\lceil\frac{\lambda_1-1}{\lambda_2-1}s(A)\right\rceil, \min A + k_0\left\lceil\frac{\lambda_1-1}{\lambda_2-1}s(A)\right\rceil - 1\right],\\
I_{k_0+1} &\ =\ [\min A, \max A]\backslash \left(\cup_{j=1}^{k_0} I_j\right),
\end{align*}
where $k_0$ is the largest nonnegative integer such that 
$$\min A + k_0\left\lceil\frac{\lambda_1-1}{\lambda_2-1}s(A)\right\rceil - 1 \ \leqslant\ \max A.$$
It is easy to see that
\begin{equation}\label{e23}k_0\ \leqslant\ \left\lceil \frac{\lambda_2-1}{\lambda_1 - 1}\right\rceil.\end{equation}
For $1\leqslant j\leqslant k_0+1$, define $A_j := A\cap I_j$. It follows that 
$$s(A_j)\ \leqslant\ \left\lceil \frac{\lambda_1-1}{\lambda_2-1}s(A)\right\rceil.$$
Define $B' = B\cup D$, where $D > A\cup B$ and $|D| = \lceil \lambda_2-1\rceil$. 
For each $j$, $B'$ surrounds $A_j$, and 
\begin{align*}
(\lambda_2-1)s(A_j) + |A_j| &\ \leqslant\ (\lambda_2-1)\left\lceil \frac{\lambda_1-1}{\lambda_2-1}s(A)\right\rceil + |A|\\
&\ \leqslant\ (\lambda_1-1)s(A) + \lambda_2 - 1 + |A|\\
&\ \leqslant\ |B| + \lambda_2 - 1\ \leqslant \ |B'|.
\end{align*}
Hence, by $\lambda_2$-democracy of type $2$, we obtain
$\|1_{A_j}\| \leqslant C_1\|1_{B'}\|$. By the triangular inequality,
\begin{equation}\label{e20}\|1_A\|\ \leqslant\ \sum_{j=1}^{k_0+1} \|1_{A_j}\|\ \leqslant\ C_1(k_0+1)\|1_{B'}\|.\end{equation}
Furthermore, if we let $C_3:= \inf_n \|e_n\|$ and $C_4:= \sup_n \|e_n\|$, then for an $m\in B$,
\begin{align}\label{e21}\|1_{B'}\|&\ \leqslant\ \|1_B\| + \|1_{D}\|\ \leqslant\ \|1_B\| + C_4\lceil \lambda_2-1\rceil,\mbox{ and } \\
\label{e22}\|1_B\|&\ \geqslant\ \frac{1}{C_2}\|e_{m}\| \ \geqslant\ \frac{C_3}{C_2}.
\end{align}
It follows from \eqref{e23}, \eqref{e20}, \eqref{e21}, and \eqref{e22} that 
$$\|1_A\|\ \leqslant\ C_1\left(\left\lceil \frac{\lambda_2-1}{\lambda_1-1}\right\rceil+1\right)\left(1+C_2\lceil \lambda_2-1\rceil \frac{C_4}{C_3}\right)\|1_B\|.$$
We, therefore, conclude that the basis is $\lambda_1$-democratic of type $2$. 
\end{proof}
%%%%%%%%%%%%%%%%%%%%%%%%%%%%%%%%%%%%%%%%%%%%%%%%%%%%%%%%%%%%%%%%%%%%%%%%%%%%%%%%%%%%%%%%%%%%%%%%%%%%%%%%%%%%%%%%%%%%%%%%%%%%%%%%%%%%%%%%%%%%%%%%%%%%%%%%%%%%%%%%%%%%%%%%%%%%%%%%%%%%%%%%%%%%%%%%%%%%%%%%%%%%%%%%%%%%%%%%%%%%%%%%%%%%%%%%%%%%%%%%%%%%%%%%%%%%%%%%%%%%%%%%%%%%%%%%%%%%%%%%%%%%%%%%%%%%%%%%%%%%%%%%%%%%%%%%%%%%%%%%%%%%%%%%%%%%%%%%%%%%%%%%%%%%%%%%%%%%%%%%%%%
\section{Isometric theory for $\lambda$-almost greedy of type $2$ bases}\label{isometry}
We would like to characterize bases that satisfy \eqref{e5} using $\lambda$-SLC2 with constant $1$. It turns out that
when $\lambda \leqslant 2$, we have the equivalence
$$\lambda\mbox{-AG2 with constant 1}\Longleftrightarrow\ \lambda\mbox{-SLC2 with constant 1}.$$
However, for every $\lambda > 2$, the equivalence fails; in particular, we have only the backward implication. This section proves these claims. We start with an easy observation that holds for all $\lambda \geqslant 1$. 

\subsection{Every $\lambda\geqslant 1$}
\begin{prop}\label{implysuppquasi}
A basis that is $\lambda$-SLC2 with constant $1$ must be suppression quasi-greedy with constant $1$. 
\end{prop}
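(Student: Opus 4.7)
The plan is to exploit only the degenerate case $A=\emptyset$ of the $\lambda$-SLC2 hypothesis with constant $1$, and to reconstruct the projection $P_\Lambda(y)$ on a greedy set one coordinate at a time. For every $\lambda\geqslant 1$, setting $A=\emptyset$ in Definition \ref{SLC2defi} fulfills condition (a) trivially ($0\leqslant|B|$) and condition (c) vacuously, so the hypothesis with $C=1$ yields the \emph{disjoint addition inequality}
\begin{equation*}
\|x\|\ \leqslant\ \|x+1_{\delta B}\|
\end{equation*}
for every $x\in X$ with $\|x\|_\infty\leqslant 1$, every finite $B\subset\mathbb{N}$ disjoint from $\supp(x)$, and every sign sequence $\delta$. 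After rescaling, this reads $\|z\|\leqslant\|z+\alpha e_n\|$ whenever $n\notin\supp(z)$ and $|\alpha|\geqslant\|z\|_\infty$.

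To prove suppression quasi-greediness with constant $1$, fix $y\in X$, $m\in\mathbb{N}$, and $\Lambda\in\mathcal{G}(y,m)$. After discarding any $n\in\Lambda$ with $e_n^*(y)=0$ (which leaves $P_\Lambda(y)$ unchanged), enumerate $\Lambda=\{n_1,\ldots,n_{m'}\}$ in order of \emph{increasing} magnitude, i.e.\ $|e_{n_1}^*(y)|\leqslant\cdots\leqslant|e_{n_{m'}}^*(y)|$, and define the telescoping sequence
\begin{equation*}
z_0\ :=\ y-P_\Lambda(y),\qquad z_k\ :=\ z_{k-1}+e_{n_k}^*(y)\,e_{n_k}\quad(1\leqslant k\leqslant m'),
\end{equation*}
so that $z_{m'}=y$. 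The desired inequality $\|y-P_\Lambda(y)\|\leqslant\|y\|$ will follow from the chain $\|z_0\|\leqslant\|z_1\|\leqslant\cdots\leqslant\|z_{m'}\|$.

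Each step $\|z_{k-1}\|\leqslant\|z_k\|$ is an application of the rescaled disjoint addition inequality with $n=n_k$ and $\alpha=e_{n_k}^*(y)$. The only nontrivial verification is $\|z_{k-1}\|_\infty\leqslant|e_{n_k}^*(y)|$: the coefficients of $z_{k-1}$ are $e_n^*(y)$ for $n\notin\Lambda$ and $e_{n_j}^*(y)$ for $j<k$, so the sup-norm is bounded by $\max_{n\notin\Lambda}|e_n^*(y)|$ (which is $\leqslant\min_{n\in\Lambda}|e_n^*(y)|\leqslant|e_{n_k}^*(y)|$ because $\Lambda$ is greedy) and by $\max_{j<k}|e_{n_j}^*(y)|$ (which is $\leqslant|e_{n_k}^*(y)|$ by the chosen ordering). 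The disjointness $n_k\notin\supp(z_{k-1})$ is immediate. The only delicate point is the monotone ordering of the greedy coordinates by magnitude, without which the degenerate case $A=\emptyset$ of $\lambda$-SLC2 would be insufficient. Notably, no use is made of the $A\neq\emptyset$ instances, which is consistent with the proposition being stated uniformly for every $\lambda\geqslant 1$ even though the $\lambda$-SLC2 hypothesis itself weakens as $\lambda$ grows.
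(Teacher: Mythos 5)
Your proposal is correct and takes essentially the same route as the paper: the paper also specializes Definition~\ref{SLC2defi} to $A=\emptyset$, reduces to the singleton inequality $\|x\|\leqslant\|x+e_j\|$ for $\|x\|_\infty\leqslant 1$ and $j\notin\supp(x)$, and then invokes ``by induction.'' What you have done is make that induction explicit, namely restoring the coordinates of the greedy set one at a time in order of increasing modulus so that the rescaled hypothesis $\|z_{k-1}\|_\infty\leqslant|e_{n_k}^*(y)|$ holds at each step; this is exactly the mechanism the paper's one-line ``by induction'' is relying on.
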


\begin{proof}
Setting $A = \emptyset$, $C = 1$, and $B = j$ in Definition \ref{SLC2defi}, we have a basis that satisfies
\begin{equation}\label{e31}\|x\|\ \leqslant\ \|x+e_j\|,\end{equation}
for every $x\in X$ with $\|x\|_\infty\leqslant 1$ and for every $j\notin \supp(x)$. By induction, \eqref{e31} implies that our basis is suppression quasi-greedy with constant $1$. 
\end{proof}

From Proposition \ref{implysuppquasi} and Lemma \ref{l2}, we deduce
\begin{cor}\label{coback} We have the implication
$$\lambda\mbox{-AG2 with constant 1}\Longleftarrow\ \lambda\mbox{-SLC2 with constant 1}.$$
\end{cor}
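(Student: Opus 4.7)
The plan is to obtain this implication as an immediate consequence of the two preceding results, Proposition \ref{implysuppquasi} and Lemma \ref{l2}, carefully tracking that all constants collapse to $1$.

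First, I would assume the basis is $\lambda$-SLC2 with constant $1$. Proposition \ref{implysuppquasi} then applies verbatim (its proof already uses only the $\lambda$-SLC2 hypothesis with constant $1$, via the special instance $A=\emptyset$ and $|B|=1$), so the basis is automatically suppression quasi-greedy with constant $1$.

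Next, I would feed this into Lemma \ref{l2}. That lemma states that suppression quasi-greediness with constant $C_1$ together with $\lambda$-SLC2 with constant $C_2$ yields $\lambda$-AG2 with constant $C_1 C_2$. Taking $C_1 = C_2 = 1$, the product is still $1$, so the basis is $\lambda$-AG2 with constant $1$, as required.

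There is no real obstacle here: the corollary is a bookkeeping observation that the isometric hypotheses propagate through the argument of Lemma \ref{l2} without loss. The only thing worth verifying is that Lemma \ref{l2} really does produce the product $C_1 C_2$ (and not, say, $C_1 + C_2$ or $1+C_1 C_2$), which is clear from the displayed chain of inequalities in its proof. The more substantive half of the equivalence, namely that $\lambda$-AG2 with constant $1$ implies $\lambda$-SLC2 with constant $1$ when $\lambda \leqslant 2$ and that this converse fails for $\lambda > 2$, is the content of Theorem \ref{m11} and is handled in the remainder of Section \ref{isometry}.
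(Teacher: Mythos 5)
Your proof is correct and takes exactly the same route the paper does: apply Proposition \ref{implysuppquasi} to get suppression quasi-greediness with constant $1$, then feed both constant-$1$ hypotheses into Lemma \ref{l2}, whose constant $C_1C_2$ collapses to $1$. The paper states this deduction in one line immediately after Proposition \ref{implysuppquasi}, so nothing further is needed.
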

\subsection{Below the threshold $\lambda \leqslant 2$}
We aim at proving the following
\begin{lem}\label{l20}
For $\lambda\leqslant 2$, 
\begin{equation}\label{e32}\lambda\mbox{-AG2 with constant 1}\Longleftrightarrow\ \lambda\mbox{-SLC2 with constant 1}.\end{equation}
\end{lem}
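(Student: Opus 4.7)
The backward implication is Corollary \ref{coback}, so my task is the forward one: for $\lambda\in[1,2]$, upgrade $\lambda$-AG2 with constant $1$ to $\lambda$-SLC2 with constant $1$. The plan is to revisit the proof of Lemma \ref{l1}, where the $\lambda$-SLC2 constant appears as a product $C_1C_2$ of the $\lambda$-AG2 constant $C_1$ and a suppression quasi-greedy constant $C_2$. The hypothesis supplies $C_1=1$ for free, so the entire game is to produce suppression quasi-greedy with constant $1$ from $\lambda$-AG2 with constant $1$; the restriction $\lambda\leqslant 2$ is what makes this step possible.

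Following the inductive scheme of Proposition \ref{implysuppquasi}, suppression quasi-greedy with constant $1$ reduces to the add-one inequality $\|w\|\leqslant\|w+\delta e_j\|$ valid for every $w$ with $\|w\|_\infty\leqslant 1$, every $j\notin\supp(w)$, and every sign $\delta$. The main obstacle is a mismatch of orders: $\lambda$-AG2 only constrains greedy sums of sizes $\lceil\lambda m\rceil$, and for $\lambda>1$ it does not directly address the removal of a single coordinate. The way around this is an auxiliary-index trick. For $\lambda=1$ one simply applies the definition to $w+\delta e_j$ with $m=1$ and $I=\emptyset$. For $1<\lambda\leqslant 2$, pick any $k\notin\supp(w)\cup\{j\}$ and any sign $\delta'$, set $\tilde w := w+\delta e_j+\delta' e_k$, and apply $\lambda$-AG2 with $m=1$ and $I=\{k\}$: the set $\{j,k\}$ is a valid greedy set of $\tilde w$ of order $\lceil\lambda\rceil=2$, which gives
\begin{equation*}
\|w\|\ =\ \|\tilde w-P_{\{j,k\}}(\tilde w)\|\ \leqslant\ \|\tilde w-P_{\{k\}}(\tilde w)\|\ =\ \|w+\delta e_j\|.
\end{equation*}
The condition $\lceil\lambda\rceil\leqslant 2$ is essential: for $\lambda>2$ one would need at least two auxiliary indices, whereas the single-interval requirement $|I|\leqslant m=1$ accommodates only one, which is consistent with the counterexample announced in the last part of Theorem \ref{m11}.

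With suppression quasi-greedy (constant $1$) in hand, I would rerun the computation of Lemma \ref{l1} with $C_1=C_2=1$: for $x,A,B,\varepsilon,\delta$ as in Definition \ref{SLC2defi} (the case $A=\emptyset$ being trivial), form $y := x+1_{\varepsilon A}+1_{\delta B}+1_D$ with $D=[\min A,\max A]\setminus A$, pick $\Lambda\subset B\cup D$ with $|\Lambda|=\lceil\lambda s(A)\rceil$, and apply $\lambda$-AG2 with constant $1$ to $y$ using $m=s(A)$ and $I=A\cup D$ to get $\|y-P_\Lambda(y)\|\leqslant\|x+1_{\delta B}\|$. Suppression quasi-greedy with constant $1$ then removes the modulus-$1$ coordinates of $y-P_\Lambda(y)$ lying on $(B\cup D)\setminus\Lambda$ to give $\|x+1_{\varepsilon A}\|=\|y-P_{B\cup D}(y)\|\leqslant\|y-P_\Lambda(y)\|$, so that composing the two bounds yields $\|x+1_{\varepsilon A}\|\leqslant\|x+1_{\delta B}\|$, i.e., $\lambda$-SLC2 with constant $1$.
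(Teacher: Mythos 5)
Correct, and essentially the same as the paper's proof: both reduce the forward implication to showing that $\lambda$-AG2 with constant $1$ forces suppression quasi-greedy with constant $1$ (via the constant-tracking in Lemma \ref{l1}), and both obtain the add-one inequality $\|w\|\leqslant\|w+\delta e_j\|$ by padding with one auxiliary index to form a greedy set of order $2$ and comparing against the singleton interval. The only cosmetic difference is your handling of $\lambda=1$ directly with $m=1$, $I=\emptyset$ rather than the paper's $m=2$, $I=\{j\}$; the argument is otherwise identical.
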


The backward implication of \eqref{e32} holds for all $\lambda$, as stated in Corollary \ref{coback}. To prove the forward implication, we recall the proof of Lemma \ref{l1}. There we show that if a basis is $\lambda$-AG2 with constant $C_1$, then it has to be $\lambda$-SLC2 with constant $C_1C_2$, where $C_2$ is the suppression quasi-greedy constant. Therefore, to prove the forward implication of \eqref{e32}, it suffices to prove that for $\lambda\leqslant 2$,
$$\lambda\mbox{-AG2 with constant 1}\ \Longrightarrow \ \mbox{suppression quasi-greedy with constant }1.$$

\begin{proof}[Proof of Lemma \ref{l20}]
Assume that our basis $\mathcal{B}$ is $\lambda$-AG2 with constant $1$. 
We have
\begin{equation}\label{e33}\|x-G_2(x)\|\ \leqslant\ \|x-e_j^*(x)e_j\|, \forall x\in X,  j\in \mathbb{N},  G_2(x).\end{equation}
Indeed, if $\lambda = 1$, we let $m = 2$ in \eqref{e5}; if $\lambda\in (1, 2]$, we let $m = 1$. 
Pick arbitrary $y\in X$ with $\|y\|_\infty\leqslant 1$ and $k\notin \supp(y)$. Set $x:= y+e_k+e_\ell$, where $\ell$ is any natural number such that $\ell\notin \{k\}\cup \supp(y)$. By \eqref{e33}, 
$$\|x-(e_k+e_\ell)\|\ \leqslant\ \|x-e_\ell\|;$$
hence,
$$\|y\|\ \leqslant\ \|y+e_k\|,$$
which is true for all $y\in X$ with $\|y\|_\infty\leqslant 1$ and $k\notin \supp(y)$. We proceed by induction to conclude that $\mathcal{B}$ is suppression quasi-greedy with constant $1$. 
\end{proof}

\subsection{Above the threshold $\lambda > 2$}
Fix $\lambda > 2$. We shall construct a basis that is not suppression quasi-greedy with constant $1$, but the basis is $\lambda$-AG2 with constant $1$. By Proposition \ref{implysuppquasi}, the basis is not $\lambda$-SLC2 with constant $1$. 

Let $X$ be the completion of $c_{00}$ with respect to the norm
$$\|x=(x_i)_{i=1}^\infty\| \ =\  \left|\frac{x_1}{\lambda}+x_2\right|\vee \left|x_1+\frac{x_2}{\lambda}\right|\vee \frac{1}{\lambda}\sum_{i\geqslant 1} |x_i|.$$
Let $\mathcal{B}$ be the unit vector basis.

\begin{claim}
The basis $\mathcal{B}$ is not suppression quasi-greedy with constant $1$.
\end{claim}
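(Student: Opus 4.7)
The plan is to exhibit a single vector $x \in X$ together with a valid greedy set of order $1$ whose complement has larger norm than $x$ itself, which immediately violates suppression quasi-greedy with constant $1$. The vector I propose is $x := e_1 - e_2$, i.e.\ $x_1 = 1$, $x_2 = -1$, and $x_i = 0$ for $i \geqslant 3$. Observe that since $|x_1| = |x_2|$, the singleton $\{1\}$ is a legitimate greedy set of $x$ of order $1$, so $G_1(x) = e_1$ is an admissible greedy approximation.

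I would then compute the two norms directly from the definition. For $x = e_1 - e_2$, the three quantities defining the norm are
\[
\left|\tfrac{x_1}{\lambda} + x_2\right| \;=\; 1 - \tfrac{1}{\lambda},\qquad
\left|x_1 + \tfrac{x_2}{\lambda}\right| \;=\; 1 - \tfrac{1}{\lambda},\qquad
\tfrac{1}{\lambda}\sum_i |x_i| \;=\; \tfrac{2}{\lambda},
\]
so $\|x\| = \max\bigl(1 - \tfrac{1}{\lambda},\, \tfrac{2}{\lambda}\bigr)$. A short case split shows $1 - 1/\lambda < 1$ always and $2/\lambda < 1$ precisely when $\lambda > 2$, so $\|x\| < 1$ under our standing hypothesis. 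On the other hand,
\[
x - G_1(x) \;=\; -e_2,\qquad \|-e_2\| \;=\; \bigl|0 - 1\bigr| \vee \bigl|0 - \tfrac{1}{\lambda}\bigr| \vee \tfrac{1}{\lambda} \;=\; 1.
\]
Hence $\|x - G_1(x)\| = 1 > \|x\|$ for every $\lambda > 2$, proving that $\mathcal{B}$ fails suppression quasi-greedy with constant $1$.

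There is no serious obstacle here: the argument is a direct numerical verification. The only mild subtlety is noticing that the sign pattern $(+1, -1)$ on the first two coordinates creates cancellation in the two weighted terms while leaving the $\ell_1/\lambda$ term unchanged, so that removing $e_1$ (a permitted greedy choice because of the tie $|x_1| = |x_2|$) destroys that cancellation and makes the first term jump from $1 - 1/\lambda$ up to $1$. The threshold $\lambda > 2$ is exactly what is needed to guarantee that the $\ell_1/\lambda$ term $2/\lambda$ is also strictly below $1$, so that $\|x\| < 1 = \|x - G_1(x)\|$ in both sub-regimes $2 < \lambda \leqslant 3$ and $\lambda > 3$.
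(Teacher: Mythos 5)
Your proof is correct, and it takes a slightly different route from the paper's. The paper picks $x = (-s, 1, 0, \ldots)$ with a carefully chosen parameter $s \in \bigl(\max\{1/(\lambda-1),\, \lambda/(\lambda+1)\},\, 1\bigr)$ so that $|x_1| < |x_2|$ strictly, making $\{2\}$ the unique greedy set, and then verifies three inequalities to show $\|x\| < \|x - G_1(x)\| = s$. You instead take $x = e_1 - e_2$, noting that the tie $|x_1| = |x_2| = 1$ makes $\{1\}$ a legitimate greedy set, and then the counterexample reduces to the clean computation $\|x\| = \max\bigl(1 - \tfrac{1}{\lambda},\, \tfrac{2}{\lambda}\bigr) < 1 = \|-e_2\|$. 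Both exploit the same mechanism — the sign cancellation on coordinates $1,2$ that disappears when you strip off one of them — but your version avoids introducing the auxiliary parameter $s$ and its two lower-bound constraints, since exploiting the non-uniqueness of greedy sets at a tie is permitted under the definition (which quantifies over all greedy sets $G_m(x)$). This is a modest simplification and is perfectly valid; the paper's strict-inequality choice has the cosmetic advantage of not relying on ties, but gains nothing essential here.
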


\begin{proof}
Choose $s\in \left(\max\left\{\frac{1}{\lambda-1}, \frac{\lambda}{\lambda+1}\right\}, 1\right)$, which is possible due to $\lambda > 2$. Set $x = (-s, 1, 0, \ldots)$ and $y = (-s, 0, 0, \ldots)$. We have
$$\|x\| \ =\ \left(-\frac{s}{\lambda}+1\right)\vee \left|-s+\frac{1}{\lambda}\right|\vee\frac{1}{\lambda}(1+s)\mbox{ and }\|y\|\ =\ s.$$
By how $s$ is chosen, $\|x\| < \|y\|$, so $\mathcal{B}$ is not suppression quasi-greedy with constant $1$. 
\end{proof}

\begin{claim}
   The basis $\mathcal{B}$ is $\lambda$-AG2 with constant $1$.  
\end{claim}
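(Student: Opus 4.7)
The plan is to show that for every $x\in X$, every $m\in \mathbb{N}$, every greedy set $A$ of $x$ of order $k := \lceil\lambda m\rceil$, and every interval $I\in\mathcal{I}$ with $|I|\leq m$, one has $\|x-P_A(x)\|\leqslant \|x-P_I(x)\|$. I would decompose the norm into $\|y\| = N_1(y)\vee N_2(y) \vee N_3(y)$ with $N_1(y):= |y_1/\lambda + y_2|$, $N_2(y):=|y_1+y_2/\lambda|$, and $N_3(y):=\lambda^{-1}\sum_i|y_i|$, and verify $N_j(x - P_A(x))\leqslant \|x-P_I(x)\|$ for $j=1,2,3$ separately. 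Write $a := \min_{n\in A}|e_n^*(x)|$, and note that $\lambda > 2$ forces $k\geqslant \lceil\lambda\rceil\geqslant 3$ already at $m=1$, a fact I will rely on repeatedly.

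The bound for $N_3$ is immediate: since $|A|\geqslant |I|$ and $A$ picks up the largest coefficients, $\sum_{n\in I}|e_n^*(x)|\leqslant \sum_{n\in A}|e_n^*(x)|$, hence $N_3(x-P_A)\leqslant N_3(x-P_I)\leqslant\|x-P_I\|$. For $N_1$ and $N_2$ I would run a case analysis on $A\cap\{1,2\}$. If $\{1,2\}\subseteq A$, both $N_1(x - P_A)$ and $N_2(x - P_A)$ vanish. If $A\cap\{1,2\}=\emptyset$, then $|e_1^*(x)|, |e_2^*(x)|\leqslant a$ and $N_j(x - P_A) = N_j(x)$ for $j=1,2$; the inequality is immediate when $I\cap\{1,2\}=\emptyset$, and when $I$ meets $\{1,2\}$ I bound $N_1(x), N_2(x)$ above by $(1+1/\lambda)a$ via the triangle inequality and $N_3(x - P_I)$ below by a counting argument on $A\cap I^c$ (whose size is controlled by $|A|\geqslant \lambda m$ and $|I|\leqslant m$). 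The substantive case is when exactly one of $\{1,2\}$, say $1$, lies in $A$: then $|e_1^*(x)|\geqslant a\geqslant |e_2^*(x)|$, and a direct computation yields $N_1(x-P_A)\vee N_2(x - P_A) = |e_2^*(x)|$. I subdivide again on $I\cap\{1,2\}$: if $1\in I$, $2\notin I$, use $N_1(x - P_I)=|e_2^*(x)|$; if $2\in I$, $1\notin I$, use $N_2(x - P_I)=|e_1^*(x)|\geqslant |e_2^*(x)|$; and if $I$ contains both or neither of $1, 2$, I extract $|e_2^*(x)|\leqslant N_3(x - P_I)$ by counting elements of $A$ surviving in $I^c$, each of modulus $\geqslant |e_2^*(x)|$.

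The main obstacle will be the last two sub-subcases of the ``exactly one'' case (namely $I$ containing both or neither of $\{1,2\}$), because there only $N_3(x - P_I)$ is available to dominate $|e_2^*(x)|$. All the arithmetic reduces to an inequality of the shape $|e_1^*(x)|\geqslant (\lambda + m - |A|)|e_2^*(x)|$, or its counterpart involving $|A|-r+1$ when $I = [1, r]$. For $m\geqslant 2$, $(\lambda-1)m>\lambda$ because $\lambda>2$, so the coefficient on the right is negative and the inequality is trivial; for $m = 1$, the coefficient is at most $\lambda - \lceil\lambda\rceil + 1\leqslant 1$, and the inequality follows from $|e_1^*(x)|\geqslant |e_2^*(x)|$. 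This is precisely where the assumption $\lambda > 2$ enters crucially: it guarantees that enough elements of $A$ survive in $I^c$ even when $m=1$, so that the full bound can be squeezed out of $N_3(x - P_I)$ alone.
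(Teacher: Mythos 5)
Your overall scheme—decompose $\|\cdot\|$ into $N_1\vee N_2\vee N_3$, bound $N_3$ once using the greedy property and $|A|\geqslant|I|$, then run a case analysis on $A\cap\{1,2\}$ and $I\cap\{1,2\}$—is the same as the paper's. However, two concrete issues arise.

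First, the case $A\cap\{1,2\}=\emptyset$ with $|I\cap\{1,2\}|=1$ is where the real work is, and your sketch there does not close. You propose to dominate $N_1(x),N_2(x)\leqslant(1+1/\lambda)a$ by the triangle inequality and then beat this with $N_3(x-P_I)\geqslant\lambda^{-1}|A\cap I^c|\,a$. But if, say, $I=\{1\}$, $m=1$, and $\lambda$ is an integer, then $|A\cap I^c|=|A|=\lceil\lambda\rceil=\lambda$, so your lower bound is only $a$, strictly less than $(1+1/\lambda)a$. To finish you must either separate into the opposite-sign case (where $N_1(x),N_2(x)\leqslant a$ suffices) and the same-sign case (where you must also harvest the surviving coordinate $|x_2|$, resp.\ $|x_1|$, inside $\sum_{i\notin I}|x_i|$), or carry the extra $|x_2|$ term through explicitly; the paper does exactly this sign-split, and its Cases 3b and 3c are precisely the delicate subcases you dismissed as routine.

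Second, your diagnosis that $\lambda>2$ is ``precisely where the assumption enters crucially'' is mistaken. In the case ``exactly one of $\{1,2\}$ in $A$,'' you only need $|x_2|\leqslant N_3(x-P_I)$, which after counting reduces to $\lceil\lambda m\rceil+1-m\geqslant\lambda$, i.e.\ $\lceil\lambda m\rceil\geqslant\lambda+m-1$. This holds for every $\lambda\geqslant1$ and $m\geqslant1$, so there is nothing special about $\lambda>2$ there (and your quantity $\lambda+m-|A|$ is in fact $\leqslant 1$ for all $\lambda\geqslant1$, $m\geqslant1$, so the $|x_1|\geqslant(\lambda+m-|A|)|x_2|$ inequality is always trivial). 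In the paper, the constraint $\lambda>2$ is used only in the companion claim that $\mathcal{B}$ fails suppression quasi-greediness with constant $1$ (choosing $s\in(\max\{1/(\lambda-1),\lambda/(\lambda+1)\},1)$ requires $\lambda>2$); the $\lambda$-AG2-with-constant-$1$ argument itself needs no such restriction.
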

\begin{proof}
Let $x = (x_i)_{i=1}^\infty\in X, m\in \mathbb{N}$ and choose $A\in \mathcal{G}(x, \lceil\lambda m\rceil)$ and $I\in \mathcal{I}$ with $|I|\leqslant m$. We need to show that 
$$\|x-P_A(x)\|\ \leqslant\ \|x-P_I(x)\|.$$
We proceed by case analysis. 

Case 1: If $\{1,2\}\subset A$, then 
$$\|x-P_A(x)\|\ =\ \frac{1}{\lambda}\sum_{i\notin A}|x_i|\ \leqslant\ \frac{1}{\lambda}\sum_{i\notin I}|x_i|\ \leqslant\ \|x-P_I(x)\|.$$

Case 2: If $|\{1,2\}\cap A|=1$, we can assume without loss of generality that $1\in A$. It follows that
$$\|x-P_A(x)\|\ =\ |x_2| \vee \frac{1}{\lambda}\sum_{i\notin A}|x_i|.$$
Since $A$ is a greedy set of order $\lceil \lambda m\rceil$ and $2\notin A$, there exist at least $\lceil \lambda m \rceil+1$ coefficients $x_i$'s that are at least $|x_2|$. Hence,
$$\|x-P_I(x)\|\ \geqslant \ \frac{1}{\lambda}\sum_{i\notin I}|x_i| \ \geqslant\ \frac{\lceil \lambda m \rceil+1-m}{\lambda}|x_2|\ \geqslant\ |x_2|,$$
which implies that $\|x-P_A(x)\|\leqslant \|x-P_I(x)\|$.

Case 3: If $\{1,2\}\cap A = \emptyset$, then
$$\|x-P_A(x)\|\ =\ \left|\frac{x_1}{\lambda}+x_2\right|\vee \left|x_1 + \frac{x_2}{\lambda}\right| \vee \frac{1}{\lambda}\sum_{i\notin A}|x_i|.$$
%If $\{1,2\}\cap I =\emptyset$, then it is immediate that $\|x-P_A(x)\|\leqslant \|x-P_I(x)\|$.
Since $A$ is a greedy set of order $\lceil \lambda m\rceil$ and $\{1, 2\}\cap A = \emptyset$, there exist at least $\lceil \lambda m \rceil+1$ coefficients $x_i$'s that are at least $t:=\max\{|x_1|, |x_2|\}$. 

If $x_1$ and $x_2$ have opposite signs, 
\begin{align*}\left|\frac{x_1}{\lambda} + x_2\right|, \left|x_1 + \frac{x_2}{\lambda}\right|\ \leqslant\ t\ \leqslant\ \frac{\lceil \lambda m \rceil+1-m}{\lambda}t\ \leqslant\ \frac{1}{\lambda}\sum_{i\notin I}|x_i|\ \leqslant\ \|x-P_I(x)\|. 
\end{align*}

If $x_1$ and $x_2$ have the same sign, we assume that $x_1, x_2 > 0$. Then it suffices to show the following inequality 
\begin{equation}\label{e30}\max\left\{\frac{x_1}{\lambda} + x_2, x_1+\frac{x_2}{\lambda}\right\}\ \leqslant\ \|x-P_I(x)\|.\end{equation}
Inequality \eqref{e30} is immediate if $\{1,2\}\cap I = \emptyset$. Assume that $\{1,2\}\cap I \neq \emptyset$.

Case 3a: $\{1, 2\}\subset I$. Then $m\geqslant 2$. Recall that there exist at least $\lceil \lambda m \rceil$ coefficients $x_i$'s with $i\geqslant 3$ and of modulus at least $t$. Hence,
$$\frac{1}{\lambda}\sum_{i\notin I}|x_i|\ \geqslant\ \frac{1}{\lambda}(\lceil \lambda m \rceil - (m - 2))t\ \geqslant\ 2t\ \geqslant \ \max\left\{\frac{x_1}{\lambda} + x_2, x_1+\frac{x_2}{\lambda}\right\}.$$

Case 3b: $\{1,2\}\cap I = \{1\}$. In this case, $I = \{1\}$ because $I$ is an interval. It follows that 
$$\frac{1}{\lambda}\sum_{i\notin I}|x_i|\ =\ \frac{1}{\lambda}\sum_{i\geqslant 2}|x_i|\ \geqslant\ \frac{1}{\lambda}(\lceil \lambda m \rceil t + x_2)\ \geqslant\ t+\frac{x_2}{\lambda}\ \geqslant \ \max\left\{\frac{x_1}{\lambda} + x_2, x_1+\frac{x_2}{\lambda}\right\}.$$

Case 3c: $\{1,2\}\cap I = \{2\}$. The same reasoning as in Case 3a gives
$$\frac{1}{\lambda}\sum_{i\notin I}|x_i|\ \geqslant\ \frac{x_1}{\lambda} + \frac{1}{\lambda}(\lceil \lambda m \rceil - (m - 1))t\ \geqslant\ \frac{x_1}{\lambda}+\left(m-\frac{m}{\lambda}+\frac{1}{\lambda}\right)t.$$
Hence,
$$\frac{1}{\lambda}\sum_{i\notin I}|x_i|\ \geqslant\ \frac{x_1}{\lambda}+\left(m-\frac{m}{\lambda}+\frac{1}{\lambda}\right)x_2\ \geqslant\ \frac{x_1}{\lambda} + x_2,$$
It remains to show that
$$\frac{x_1}{\lambda}+\left(m-\frac{m}{\lambda}+\frac{1}{\lambda}\right)t\ \geqslant\ x_1+\frac{x_2}{\lambda}.$$
\begin{itemize}
\item If $x_1\geqslant x_2$, we have
\begin{align*}
\frac{x_1}{\lambda}+\left(m-\frac{m}{\lambda}+\frac{1}{\lambda}\right)t&\ =\ \frac{x_1}{\lambda}+\left(m-\frac{m}{\lambda}+\frac{1}{\lambda}-1\right)x_1+x_1\\
&\ \geqslant\ \left(m-\frac{m}{\lambda}+\frac{2}{\lambda}-1\right)x_2+x_1\ \geqslant\ x_1 + \frac{x_2}{\lambda}.
\end{align*}
\item If $x_1 < x_2$, we have
\begin{align*}
\frac{x_1}{\lambda}+\left(m-\frac{m}{\lambda}+\frac{1}{\lambda}\right)t&\ =\ \frac{x_1}{\lambda}+\left(m-\frac{m}{\lambda}+\frac{1}{\lambda}\right)x_2\\
&\ >\ \left(\frac{1}{\lambda}+m-\frac{m}{\lambda}\right)x_1 + \frac{x_2}{\lambda}\ \geqslant\ x_1 + \frac{x_2}{\lambda}. 
\end{align*}
\end{itemize}
This completes our proof. 
\end{proof}

%%%%%%%%%%%%%%%%%%%%%%%%%%%%%%%%%%%%%%%%%%%%%%%%%%%%%%%%%%%%%%%%%%%%%%%%%%%%%%%%%%%%%%%%%%%%%%%%%%%%%%%%%%%%%%%%%%%%%%%%%%%%%%%%%%%%%%%%%%%%%%%%%%%%%%%%%%%%%%%%%%%%%%%%%%%%%%%%%%%%%%%%%%%%%%%%%%%%%%%%%%%%%%%%%%%%%%%%%%%%%%%%%%%%%%%%%%%%%%%%%%%%%%%%%%%%%%%%%%%%%%%%%%%%%%%%%%%%%%%%%%%%%%%%%%%%%%%%%%%%%%%%%%%%%%%%%%%%%%%%%%%%%%%%%%%%%%%%%%%%%%%%%%%%%%%%%%%%%%%%%%%
\section{On the $\lambda$-PG2 and $\lambda$-RPG2 properties}\label{(R)PG}
As briefly mentioned in Section \ref{mainprelim}, enlarging greedy sums in the original definition of RPG bases by Dilworth and Khurana does not weaken the property (\cite[Theorem 5.8]{C1}). This motivates \cite{C3} to find a reformulation of the RPG property such that an analog of Theorem \ref{pm3} exists for the reformulation. The original definition of RPG bases \cite{DK} involves the natural ordering within greedy sums. For our purposes, we use the reformulation in \cite{C3} instead. 

\begin{thm}\cite[Theorem 1.1]{C3}\label{RPGchar} A basis is RPG if and only if there exists $C\geqslant 1$ such that
\begin{equation}\label{e3}\|x-P_\Lambda(x)\|\ \leqslant\ C\inf\{\|x-P_I(x)\|\,:\, I\in \mathcal{I}, |I|\leqslant m, \mbox{ and if }I\neq \emptyset, \Lambda \leqslant \max I\},\end{equation}
for all $x\in X$, $m\in \mathbb{N}$, and $\Lambda\in \mathcal{G}(x, m)$.
\end{thm}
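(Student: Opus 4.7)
The plan is to prove both implications of the characterization, using the key observation that partial sums are projections onto intervals starting at $1$. The original definition of RPG bases from \cite{DK} compares $\|x - P_\Lambda(x)\|$ against errors from partial sums $\|x - \sum_{i=1}^n e_i^*(x) e_i\|$ under the natural-ordering constraint $n \geqslant \max \Lambda$. Since any partial sum $\sum_{i=1}^n e_i^*(x) e_i$ equals $P_{[1,n]}(x)$ and the interval $I = [1, n]$ satisfies $\Lambda \leqslant \max I$ precisely when $n \geqslant \max \Lambda$, the implication \eqref{e3} $\Rightarrow$ RPG follows immediately by restricting the infimum in \eqref{e3} to intervals of the form $[1, n]$, possibly after reconciling size conventions into the constant $C$.

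For the converse direction RPG $\Rightarrow$ \eqref{e3}, fix $x \in X$, $m \in \mathbb{N}$, $\Lambda \in \mathcal{G}(x, m)$, and an interval $I = [a, b] \in \mathcal{I}$ with $|I| \leqslant m$ and $\max \Lambda \leqslant b$. The first preparatory step is to upgrade RPG to quasi-greediness: by taking the trivial partial sum ($n=0$) in the original RPG condition one obtains $\|x - P_\Lambda(x)\| \leqslant C_0 \|x\|$, which via a standard argument analogous to \cite[Proposition 4.1]{O} yields suppression quasi-greediness with a constant $C_1$. I would then decompose
\[
x - P_I(x) \;=\; \bigl(x - P_{[1,b]}(x)\bigr) + P_{[1,a-1]}(x)
\]
and apply the original RPG condition to the partial sum $P_{[1,b]}$, which is legitimate because $b = \max I \geqslant \max \Lambda$. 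This gives $\|x - P_\Lambda(x)\| \leqslant C_0 \|x - P_{[1,b]}(x)\|$, and it remains to bound $\|P_{[1,a-1]}(x)\|$ in terms of $\|x - P_I(x)\|$.

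The main obstacle is this absorption step, where the interval $I$ may start strictly after $1$. I would split $[1, a-1] = (\Lambda \cap [1, a-1]) \sqcup ([1, a-1] \setminus \Lambda)$. On the second piece every coefficient of $x$ is bounded in modulus by $\alpha := \min_{j \in \Lambda} |e_j^*(x)|$ because $\Lambda$ is greedy, so the truncation operator (Lemma \ref{BBGlem}) combined with the UL-property \eqref{e9} bounds the corresponding piece of $P_{[1,a-1]}(x)$ by a multiple of $\alpha \|1_{[1,a-1]\setminus\Lambda}\|$, which in turn is dominated by $\|x - P_I(x)\|$ since $I$ contains at most $|\Lambda|$ coordinates of modulus at least $\alpha$ lying below $b$. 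On the first piece the indices already belong to $\Lambda$, so their contribution can be rerouted through the left-hand side at the cost of the quasi-greedy constant $C_1$. Piecing the estimates together yields \eqref{e3} with a constant depending only on $C_0$ and $C_1$, which completes the proof.
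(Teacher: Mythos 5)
This theorem is cited as \cite[Theorem 1.1]{C3} and is not proved in the present paper, so there is no in-paper proof to compare your attempt against. That said, the attempt has a fundamental problem that makes the argument collapse regardless.

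Your statement of the original RPG condition cannot be right. You claim RPG means
$\|x-P_\Lambda(x)\| \leqslant C\|x - \sum_{i=1}^n e_i^*(x)e_i\|$ for all $n \geqslant \max\Lambda$. Take any finitely supported $x$ with $|\supp(x)| > m$ and a greedy set $\Lambda$ of order $m$. Letting $n \geqslant \max(\max\Lambda, \max\supp(x))$, the right-hand side is $0$, forcing $x = P_\Lambda(x)$, which is false. So the condition you wrote down is never satisfied by any basis; the ``if and only if'' would be vacuous. This mistaken premise also explains the tension you gloss over in the forward direction: the intervals $[1,n]$ with $n\geqslant\max\Lambda$ have length $n\geqslant\max\Lambda\geqslant m$, so they never satisfy $|I|\leqslant m$, and ``reconciling size conventions into the constant'' is not something a constant can do. The actual Dilworth--Khurana RPG definition (see \cite{DK}) uses ``reverse'' partial sums tied to the natural order within the greedy set (mirroring how the reformulation \eqref{e3} restricts to intervals with $\max I \geqslant \max\Lambda$ and $|I|\leqslant m$, whereas \eqref{e26} for PG restricts to intervals with $\min I \leqslant \min\Lambda$ and $|I|\leqslant m$); you need to start from that definition, not the one you guessed.

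Even setting aside the definitional issue, the absorption step in your converse is not a proof. You want $\alpha\|1_{[1,a-1]\setminus\Lambda}\| \lesssim \|x-P_I(x)\|$ and assert it follows because ``$I$ contains at most $|\Lambda|$ coordinates of modulus at least $\alpha$ lying below $b$,'' but this gives no lower bound on $\|x-P_I(x)\|$ whatsoever: the set $[1,a-1]\setminus\Lambda$ can be much larger than $m$ (its cardinality can be of order $\max\Lambda$), and the coordinates of $x$ there may even vanish, so $\|1_{[1,a-1]\setminus\Lambda}\|$ is not controlled by the number of large coefficients of $x-P_I(x)$. Quasi-greediness plus the truncation/UL machinery alone does not supply such a comparison between norms of indicator vectors on unrelated sets; one needs a democracy- or conservativity-type property extracted from RPG itself (in the spirit of Theorem \ref{pm1} for PG, where the $\lambda$-max-conservative property carries this load). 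The phrase ``rerouted through the left-hand side'' for the $\Lambda\cap[1,a-1]$ piece is also not an argument. In short, both directions need to be rebuilt from the correct RPG definition, and the converse requires isolating a combinatorial property of the basis rather than leaning only on quasi-greediness.
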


\begin{defi}\cite[Definition 1.3]{C3} \normalfont
A basis is said to be $\lambda$-reverse partially greedy of type $2$ ($\lambda$-RPG2) if it satisfies \eqref{e3}
for all $x\in X$, $m\in \mathbb{N}$, and $\Lambda\in \mathcal{G}(x, \lceil \lambda m\rceil)$. Observe that $\mathcal{G}(x, m)$ is replaced by $\mathcal{G}(x, \lceil\lambda m\rceil)$.
\end{defi}

An analog of Theorem \ref{RPGchar} holds for PG bases.
\begin{thm}\cite[Theorem 1.2]{C3}\label{PGchar} A basis is PG if and only if there exists $C\geqslant 1$ such that
\begin{equation}\label{e26}\|x-P_\Lambda(x)\|\ \leqslant\ C\inf\{\|x-P_I(x)\|\,:\, I\in \mathcal{I}, |I|\leqslant m, \mbox{ and if }I\neq \emptyset, \Lambda \geqslant \min I\},\end{equation}
for all $x\in X$, $m\in \mathbb{N}$, and $\Lambda\in \mathcal{G}(x, m)$.
\end{thm}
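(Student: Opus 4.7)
The approach is to prove the two directions separately, using the prior equivalence (\cite[Theorem 1.7]{C1} with $\lambda=1$) that a basis is PG if and only if it is quasi-greedy and conservative, i.e., $\|1_A\|\leqslant C_3\|1_B\|$ whenever $A<B$ and $|A|\leqslant |B|$; the UL-property \eqref{e9} of quasi-greedy bases will be the main tool. The backward implication is immediate: given \eqref{e26}, restricting to the intervals $I=\{1,2,\ldots,n\}$ for $0\leqslant n\leqslant m$ (with $I=\emptyset$ for $n=0$) makes the ordering condition ``$\min I\leqslant \min\Lambda$'' automatic, and \eqref{e26} collapses to the defining PG inequality.

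For the forward direction, assume the basis is PG, hence quasi-greedy with some constant $C_1$ and conservative with some constant $C_3$. Fix $x\in X$, $m\in\mathbb{N}$, $\Lambda\in \mathcal{G}(x,m)$, and a nonempty interval $I=[a,b]$ with $|I|\leqslant m$ and $a\leqslant \min\Lambda$ (the case $I=\emptyset$ reduces to suppression quasi-greediness, which follows from PG by taking $n=0$). Decompose $I=A\cup B$ and $\Lambda=A\cup \Lambda'$ with $A=I\cap \Lambda$, $B=I\setminus \Lambda$, and $\Lambda'=\Lambda\setminus I$, so that
$$x-P_\Lambda(x)\ =\ (x-P_I(x))+P_B(x)-P_{\Lambda'}(x).$$
The key geometric fact is $B<\Lambda'$: the condition $\min\Lambda\geqslant a$ forces $\Lambda\subset [a,\infty)$, hence $\Lambda'=\Lambda\setminus [a,b]\subset [b+1,\infty)$, while $B\subset [a,b]$. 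Coupled with $|B|=|I|-|A|\leqslant m-|A|=|\Lambda'|$, the conservative property yields $\|1_B\|\leqslant C_3\|1_{\Lambda'}\|$.

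For the two projection terms, set $t=\max_{n\in B}|e_n^*(x)|$. Since $B\cap\Lambda=\emptyset$ and $\Lambda$ is a greedy set of $x$ of order $m$, we have $t\leqslant \min_{n\in \Lambda'}|e_n^*(x)|$. The UL-property then gives
$$\|P_B(x)\|\ \leqslant\ 2C_1 t\|1_B\|\ \leqslant\ 2C_1 C_3 t\|1_{\Lambda'}\|\ \leqslant\ 4C_1^2 C_3\|P_{\Lambda'}(x)\|.$$
Next, the nonzero coefficients of $x-P_I(x)$ outside $\Lambda'$ are precisely the coefficients of $x$ on $\mathbb{N}\setminus(\Lambda\cup I)$, which are dominated by $\min_{n\in \Lambda'}|e_n^*(x)|$; hence $\Lambda'$ is a greedy set of $x-P_I(x)$ of order $|\Lambda'|$, and quasi-greediness delivers $\|P_{\Lambda'}(x)\|=\|P_{\Lambda'}(x-P_I(x))\|\leqslant C_1\|x-P_I(x)\|$. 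The triangle inequality and these two bounds together yield \eqref{e26} with constant $1+(4C_1^2 C_3+1)C_1$.

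The main obstacle is recognizing the geometric fact $B<\Lambda'$, which makes conservativity directly applicable; this step crucially uses both that $I$ is an interval and that $\min I\leqslant \min\Lambda$. The remainder is standard bookkeeping, and the argument runs parallel to the proof for RPG bases in \cite[Theorem 1.1]{C3}, where the role of the right-endpoint condition ``$\Lambda\leqslant \max I$'' there is played here by the left-endpoint condition ``$\Lambda\geqslant \min I$''.
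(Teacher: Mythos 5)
Your proof is correct. The backward direction is indeed immediate since the intervals $\{1,\dots,n\}$ ($0\leqslant n\leqslant m$) all lie in the class of intervals appearing in \eqref{e26}, and the forward direction correctly reduces, via Theorem~\ref{pm1} with $\lambda=1$, to quasi-greediness plus conservativity; the decomposition $x-P_\Lambda(x)=(x-P_I(x))+P_B(x)-P_{\Lambda'}(x)$ with $B=I\setminus\Lambda$, $\Lambda'=\Lambda\setminus I$, the key geometric fact $B<\Lambda'$ (forced by $I$ being an interval with $\min I\leqslant\min\Lambda$), the cardinality bound $|B|\leqslant|\Lambda'|$, the coefficient domination from $\Lambda$ being a greedy set, and the UL-property \eqref{e9} all fit together exactly as you describe, yielding \eqref{e26} with constant $1+(4C_1^2C_3+1)C_1$. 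This is essentially the argument the paper delegates to the cited reference, and it is the natural PG mirror of the RPG case; the one cosmetic difference from the analogous machinery the present paper uses for $\lambda$-AG2 (Lemma~\ref{l2}) is that you bound $\|P_B(x)\|$ directly via the UL-property rather than routing through the truncation operator $T_a$ and Lemma~\ref{BBGlem} --- both are equivalent bookkeeping devices and either works here.
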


\begin{defi}\normalfont\label{defLambdaPG2}
A basis is said to be $\lambda$-partially greedy of type $2$ ($\lambda$-PG2) if it satisfies \eqref{e26} for all $x\in X$, $m\in \mathbb{N}$, and $\Lambda\in \mathcal{G}(x, \lceil \lambda m\rceil)$. 
\end{defi}

The same argument used in Subsection \ref{equiv} gives
\begin{prop}\label{nn}
For $1<\lambda_1 < \lambda_2$, the following holds
\begin{enumerate}
\item[i)] A basis is $\lambda_1$-PG2 if and only if it is $\lambda_2$-PG2.
\item[ii)] A basis is $\lambda_1$-RPG2 if and only if it is $\lambda_2$-RPG2. 
\end{enumerate}
\end{prop}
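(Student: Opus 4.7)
The plan is to replicate, in both the PG2 and RPG2 settings, the three-step strategy used to prove Proposition \ref{m2}: first factor the $\lambda$-property into a $\lambda$-free piece (quasi-greediness) plus a $\lambda$-indexed ``conservative''-type piece, and then transfer between $\lambda_1$ and $\lambda_2$ by a partition-and-augment argument. The first ingredient is a characterization analogous to Theorem \ref{m2'}. I expect each of $\lambda$-PG2 and $\lambda$-RPG2 to be equivalent to quasi-greediness combined with a one-sided conservative-of-type-2 condition, namely $\|1_A\|\leqslant C\|1_B\|$ whenever $(\lambda-1)s(A)+|A|\leqslant|B|$, $B$ surrounds $A$, and in addition $B\geqslant \min A$ (in the PG2 case) or $B\leqslant \max A$ (in the RPG2 case). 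Such characterizations are obtained by combining the truncation-operator bound (Lemma \ref{BBGlem}) with the UL-property \eqref{e9}, along the same lines as Lemmas \ref{l1} and \ref{l2} and Proposition \ref{p1}.

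Given these characterizations, because quasi-greediness is $\lambda$-independent, the equivalence of $\lambda_1$-PG2 with $\lambda_2$-PG2 reduces to the equivalence of the two one-sided conservative-of-type-2 properties on a quasi-greedy basis, and similarly for RPG2. The implication $\lambda_1\Rightarrow \lambda_2$ is essentially immediate: the cardinality hypothesis $(\lambda_2-1)s(A)+|A|\leqslant|B|$ is strictly stronger than $(\lambda_1-1)s(A)+|A|\leqslant|B|$, so every pair $(A,B)$ admissible for the $\lambda_2$-test is already admissible for the $\lambda_1$-test. For the reverse implication I would copy the proof of Proposition \ref{m2} essentially verbatim: partition $[\min A,\max A]$ into at most $\lceil(\lambda_2-1)/(\lambda_1-1)\rceil$ consecutive subintervals $I_1,\ldots,I_{k_0+1}$ of length $\lceil\frac{\lambda_1-1}{\lambda_2-1}s(A)\rceil$, set $A_j:=A\cap I_j$, augment $B$ by a set $D$ of $\lceil\lambda_2-1\rceil$ fresh indices, apply $\lambda_2$-conservativity of type 2 to each pair $(A_j,B\cup D)$, and assemble the estimates via the triangle inequality exactly as in \eqref{e20}--\eqref{e22}, finally absorbing $\|1_D\|$ into $\|1_B\|$ using semi-normalization together with quasi-greediness, as in \eqref{e21} and \eqref{e22}.

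The principal obstacle is placing the auxiliary set $D$ so that the one-sided position condition is preserved for every sub-piece $A_j$, not merely for $A$ itself. In the PG2 case, where $B\geqslant \min A$, placing $D$ entirely to the right of $A\cup B$ works: since $\min A_j\geqslant \min A$, we automatically obtain $B\cup D\geqslant \min A_j$ and $(B\cup D)\cap [\min A_j,\max A_j]=\emptyset$ for each $j$. In the RPG2 case the mirror-symmetric placement of $D$ to the left of $A\cup B$ achieves the analogous conclusion. This bookkeeping is the only genuinely new ingredient beyond what appears in Proposition \ref{m2}; once it is verified, the remainder of the proof is essentially word-for-word identical to the argument given in Subsection \ref{equiv}.
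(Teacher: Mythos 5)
Your high-level plan is exactly the route the paper takes (the paper's proof is a one-line pointer back to Subsection \ref{equiv}), and you correctly isolate the one genuinely new issue: the auxiliary set $D$ must be placed so that the one-sided position constraint is inherited by each piece $A_j$. Your treatment of the $\lambda$-PG2 case is fine, since $\mathbb{N}$ is unbounded on the right and one can always take $D > A\cup B$. However, your treatment of the $\lambda$-RPG2 case has a genuine gap: there is in general \emph{no} room to place $D$ to the left. In the RPG2 setting the position constraint forces $B < \min A$, hence $B\subset[1,\min A-1]$ and $|B|\leqslant \min A - 1$; the free slots available for $D$ below $\min A$ number exactly $\min A - 1 - |B|$, which can be strictly less than $\lceil\lambda_2-1\rceil$ (and can even be $0$, e.g. $B=\{1,\dots,\min A-1\}$). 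For a concrete failure take $\lambda_1=3/2$, $\lambda_2=3$, $A=[1000,1665]$, $B=[1,999]$: then $(\lambda_1-1)s(A)+|A|=999=|B|$, $B<\min A$, there is no room for $D$, and indeed $(\lambda_2-1)\lceil\tfrac{\lambda_1-1}{\lambda_2-1}s(A)\rceil+|A|=1000>|B|$, so without padding the $\lambda_2$-test fails for some $A_j$.

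The fix is small but necessary and changes the mechanics of the partition: replace the ceiling in the interval length by a floor, i.e. take blocks of length $\ell:=\lfloor\frac{\lambda_1-1}{\lambda_2-1}s(A)\rfloor$ when $\ell\geqslant 1$. Then $(\lambda_2-1)s(A_j)+|A_j|\leqslant(\lambda_2-1)\ell+|A|\leqslant(\lambda_1-1)s(A)+|A|\leqslant|B|$, so no auxiliary set $D$ is needed at all, and the one-sided position condition for each $(A_j,B)$ is automatic from $B<\min A\leqslant\min A_j$. One must then separately treat the regime $s(A)\lesssim\frac{\lambda_2-1}{\lambda_1-1}$ (where $\ell$ could vanish or $k_0$ could blow up), but there $|A|\leqslant s(A)$ is bounded, so $\|1_A\|\leqslant |A|\sup_n\|e_n\|$ is bounded while $\|1_B\|\geqslant \inf_n\|e_n\|/C_{qg}$ is bounded below, giving the estimate trivially. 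With this modification (which works uniformly for AG2, PG2, and RPG2) the rest of your argument goes through as you describe.
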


We are ready to prove \eqref{e25}.
\begin{proof}[Proof of \eqref{e25}]
Let $\lambda_2 > \lambda_1 > 1$ and $\lambda_1', \lambda_2', \lambda_1'', \lambda_2'' > 1$. 

Proposition \ref{m2} states that $\lambda_1''$-AG2 is the same as $\lambda_2''$-AG2. That $\lambda_2''$-AG2 implies $\lambda_1'$-PG2 follows immediately from Definitions \ref{defiAG2} and \ref{defLambdaPG2} and Proposition \ref{nn}. Let us discuss a basis that is $\lambda_1'$-PG2 but not $\lambda_2''$-AG2. Such a basis recently appeared in the proof of \cite[Proposition 6.10]{BDKOW}. Let $X_{S}$ be the completion of $c_{00}$ with respect to the norm
$$\|x = (x_i)_{i=1}^\infty\|\ =\ \sup \left\{\sum_{i\in F}|x_i|\,:\, \sqrt{\min F}\geqslant |F|\right\}.$$
Consider the unit vector basis $\mathcal{B}_S$ of $X_S$. It is easy to verify that $\mathcal{B}_S$ is PG. By Theorem \ref{PGchar}, we know that $\mathcal{B}_S$ is $1$-PG2 and thus, $\lambda_1'$-PG2. However, $\mathcal{B}_S$ is not $\lambda_2''$-AG2 because it is not $2$-democratic of type $2$ (recall Corollary \ref{ocor}). To see this, pick $A = \{N^2, N^2, \ldots, N^2+N-1\}$ and $B = \{1, \ldots, 2N\}$. Then $|B|\geqslant s(A) + |A|$, and $B$ surrounds $A$ for large $N$, but 
$$\frac{\|1_B\|}{\|1_A\|}\ \sim\ \frac{\sqrt{2N}}{N}\ \rightarrow\ 0\mbox{ as }N\rightarrow\infty.$$

That $\lambda_1'$-PG2 is the same as $\lambda_2'$-PG2 is due to Proposition \ref{nn}. According to Theorem \ref{pm1}, $\lambda_1$-PG implies $\lambda_2$-PG; however, the basis $\mathcal{B}_{\lambda_1, \lambda_2}$ in Section \ref{differentiatePG} shows that $\lambda_2$-PG does not necessarily imply $\lambda_1$-PG. 

It remains to show that $\lambda'_2$-PG2 implies $\lambda_1$-PG, but the converse is not necessarily true. By Proposition \ref{nn}, a $\lambda'_2$-PG2 basis is also $\lambda_1$-PG2. It follows from Definitions \ref{defLambdaPG} and \ref{defLambdaPG2} that a $\lambda_1$-PG2 basis is $\lambda_1$-PG. Hence, $\lambda'_2$-PG2 implies $\lambda_1$-PG. To see that the converse is not necessarily true, let us consider the basis $\mathcal{B}_{\lambda_1-\varepsilon, \lambda_1}$ in Section \ref{differentiatePG}, where $\varepsilon \in (0, \lambda_1 - 1)$. Then $\mathcal{B}_{\lambda_1-\varepsilon, \lambda_1}$ is $\lambda_1$-PG but not $(\lambda_1-\varepsilon)$-PG and thus, is not $(\lambda_1-\varepsilon)$-PG2. By Proposition \ref{nn}, $\mathcal{B}_{\lambda_1-\varepsilon, \lambda_1}$ is not $\lambda'_1$-PG2. 
\end{proof}
%%%%%%%%%%%%%%%%%%%%%%%%%%%%%%%%%%%%%%%%%%%%%%%%%%%%%%%%%%%%%%%%%%%%%%%%%%%%%%%%%%%%%%%%%%%%%%%%%%%%%%%%%%%%%%%%%%%%%%%%%%%%%%%%%%%%%%%%%%%%%%%%%%%%%%%%%%%%%%%%%%%%%%%%%%%%%%%%%%%%%%%%%%%%%%%%%%%%%%%%%%%%%%%%%%%%%%%%%%%%%%%%%%%%%%%%%%%%%%%%%%%%%%%%%%%%%%%%%%%%%%%%%%%%%%%%%%%%%%%%%%%%%%%%%%%%%%%%%%%%%%%%%%%%%%%%%%%%%%%%%%%%%%%%%%%%%%%%%%%%%%%%%%%%%%%%%%%%%%%%%%%
\section{Conclusion}
We have shown that all of the following greedy-type properties: greedy, AG, PG, and RPG are weakened when we enlarge greedy sums in either their original definitions or their equivalent reformulations that involve intervals (see Theorem \ref{m10} and \cite[Theorem 1.4]{C3}). Out of all these properties, only the PG property gives us a continuum of PG-type properties as the enlarging factor $\lambda$ varies in $[1, \infty)$; for the greedy, AG, and RPG properties, varying $\lambda\in (1,\infty)$ makes no difference (see \cite[Theorem 3.3]{DKKT}, Theorem \ref{m1}, and Propositions \ref{m2} and \ref{nn}).

\ \\
\end{document}